\newtheorem{theorem}{Theorem}
\newtheorem{lemma}[theorem]{Lemma}
\newtheorem{proposition}[theorem]{Proposition}
\newtheorem{remark}[theorem]{Remark}
\newtheorem{corollary}[theorem]{Corollary}
\newtheorem{defi}[theorem]{Definition}
\newtheorem{assumption}{Assumption}
\newcommand{\E}{{\mathbb E}}
\newcommand{\PP}{{\mathbb P}}
\newcommand*\diff{\mathop{}\!\mathrm{d}}
\newcommand{\DDN}{D_{N}}
\newcommand{\CDD}{\operatorname{cl} D}
\newcommand{\CDDN}{\operatorname{cl} D_{N}}
\newcommand{\xiN}{\xi_{N}}
\newcommand{\xiNDt}{\hat{\xi}_{N,\Delta t}}
\newcommand{\txiNDt}{\tilde{\xi}_{N,\Delta t}}
\newcommand{\wN}{w_{N}}
\newcommand{\twNDt}{\tilde{w}_{N,\Delta t}}
\newcommand{\EM}{\textrm{EM}}
\newcommand{\SEM}{\textrm{SEM}}
\newcommand{\TE}{\textrm{TE}}
\newcommand{\ABEM}{\textrm{ABEM}}
\newcommand{\ABEP}{\textrm{ABEP}}
\newcommand{\Lip}{\operatorname{Lip}}
\title[]{Artificial barriers for stochastic differential equations and for construction of boundary-preserving schemes}
\date{\today}
\author{Johan Ulander}
              \address{Department of Mathematical Sciences,
              Chalmers University of Technology and University of Gothenburg, 41296~Gothenburg, Sweden}
              \email{\tt johanul@chalmers.se}
\begin{document}

\begin{abstract}
We develop the novel method of artificial barriers for scalar stochastic differential equations (SDEs) and use it to construct boundary-preserving numerical schemes for strong approximation of scalar SDEs, possibly with non-globally Lipschitz drift and diffusion coefficients, whose state-space is either bounded or half-bounded. The idea of artificial barriers is to augment the SDE with artificial barriers outside the state-space to not change the solution process, and then apply a boundary-preserving numerical scheme to the resulting reflected SDE (RSDE). This enables us to construct boundary-preserving numerical schemes that achieve the same strong convergence rate as the corresponding RSDE scheme. Based on the method of artificial barriers, we construct two boundary-preserving schemes that we call the Artificial Barriers Euler--Maruyama (ABEM) scheme and the Artificial Barriers Euler--Peano (ABEP) scheme, respectively. We provide numerical experiments for the ABEM scheme and the numerical results agree with the obtained theoretical results.
\end{abstract}

\maketitle
{\small\noindent
{\bf AMS Classification.} 60H10. 60H35. 65C30

\bigskip\noindent{\bf Keywords.} Stochastic differential equations. Artificial barriers. Boundary-preserving schemes. Strong convergence. Explicit schemes.

\vspace{1cm}

\section{Introduction}\label{sec:intro}
Stochastic differential equations (SDEs) are used to model phenomena arising in, for example, physics, mathematical finance, mathematical biology and epidemic modelling \cite{GraySIS,Karlin1981ASC,introTostocCalc,MR1214374,MR2001996}. Some SDEs are confined to a strict subset, referred to as the domain of the SDE, of the target-space. Classical schemes for SDEs are known to leave the domains of SDEs \cite{MR4737060}. In this article, we develop the method of artificial barriers and construct, based on this new concept, strongly convergent numerical approximations of solutions of SDEs that are confined to the domain of the considered SDE.

We consider scalar Itô stochastic differential equations (SDEs) of the form
\begin{equation}\label{eq:SDE}
\diff X(t) = f(X(t)) \diff t + g(X(t)) \diff B(t),\ t \in (0,T],
\end{equation}
with some deterministic initial value $X(0) = x_{0}$, where $B$ is a standard Brownian motion and $T \in (0,\infty)$. We assume that the drift coefficient $f$ and diffusion coefficient $g$ are Lipschitz continuous functions on an open set $D = (a,b) \subset \mathbb{R}$ (see Assumption~\ref{ass:Lip}) such that, provided that $x_{0} \in D$, $X(t) \in D$ for every $t \in (0,T]$, almost surely (see Assumption~\ref{ass:Feller}). Observe that we assume that $X(t)$ cannot hit the boundary $\partial D = \{a,b \}$ of the domain, which can be characterised by Feller's classification of unattainable boundaries (see, for example, Chapter $15$ in \cite{Karlin1981ASC}). To cover both bounded and half-bounded cases, we let $a \in \mathbb{R}$ and $b \in \mathbb{R} \cup \{ \infty \}$ be such that $a < b$. We call the case $b = \infty$ the one-finite-boundary case and the case $b \in (a,\infty)$ the two-finite-boundaries case. The case $a = -\infty$ and $b \in (a,\infty)$ is treated in the same way. We do not consider $D = \mathbb{R}$ as the classical Euler–Maruyama (EM) scheme is boundary-preserving in this case. 

We say that a numerical scheme $\hat{X}$ for~\eqref{eq:SDE} is boundary-preserving if $\hat{X}(t) \in D$ for every $t \in [0,T]$, almost surely. Numerical schemes that may hit the boundary, but not exceed them, are referred to as non-strict boundary-preserving. On one hand, in certain applications, the domain of a model has physical significance. Numerical approximations that leave the physical domain of a model can be interpreted as non-physical. Models defined in a domain show up, for example, in the study of population dynamics \cite{GraySIS,KERMACK199133} and of phase-separation dynamics \cite{ALLEN19791085}, defined in $[0,1]$, and in the study of temperature dynamics (for example, the classical heat equation), defined in $[0,\infty)$. On the other hand, a numerical scheme that leaves the physical domain of a model will naturally be affected by forces that do not influence the model. Let us consider an example in the setting of this paper: If the solution of the SDE~\eqref{eq:SDE} only takes values in $D = (0,1)$, then modifying $f$ and $g$ outside $D=(0,1)$ will not impact the solution. Modifying $f$ and $g$ outside $D = (0,1)$ can, however, influence a numerical approximation that can leave the domain $D = (0,1)$. We can avoid such discrepancy between the exact solution and numerical approximations by constructing and studying boundary-preserving numerical schemes.

Let us discuss previous approaches to construct boundary-preserving numerical schemes. Early boundary-preserving schemes were typically implicit schemes. See, for example, \cite{MR3006996,MR2931351,MR2898556,MR3248050,MR1410392} for some works based on implicit techniques. Implicit schemes, however, tend to be computationally slower than explicit ones. The main focus in this work is explicit schemes, but implicit schemes could also be used within the developed framework. Another common approach to construct boundary-preserving schemes is to transform the solution to be approximated to another process that is easier to construct such schemes for. Typical examples of such transformations include the Lamperti transform (see, for example, \cite{Mller2010FromSD}) and the logarithmic transform \cite{MR4274899}. Time splitting methods \cite{MR2341800}, truncation techniques \cite{MR4475995,MR4242953}, and approximation by geometric Brownian motion \cite{MR4177372,Bossy2024} have also successfully been employed to construct boundary-preserving numerical schemes. An issue with the mentioned approaches, except schemes based on truncation, is that they requires regularity assumptions on the coefficient functions. Truncation schemes are the most related schemes to what we proposed in this work. We also mention \cite{MR4780408,MR4729657,MR3433041,MR3331648,DomPres,UweakBP} for some other proposed approaches to construct boundary-preserving schemes.

Let us discuss the Lamperti transform a bit more in detail. Under some assumptions, optimal convergence rates for numerical schemes for SDEs with additive noise can be transferred to these SDEs using the Lamperti transform \cite{MR4220738,MR4737060}. Recently, authors (see \cite{MR4888024,MR4597411}) have constructed higher-order boundary-preserving numerical schemes using the Lamperti transform. Traditionally, numerical schemes utilising the Lamperti transform have combined it with some classical scheme (for example, the EM scheme or the backward EM scheme) for SDEs (see, for example, \cite{MR3006996,MR3248050,MR4274899}). Recently, authors (see \cite{MR4544037,MR4737060}) have combined the Lamperti transform with time splitting methods to construct efficient numerical approximations the solutions to certain types of SDEs. Lamperti-based schemes do, however, require high regularity assumptions (for example, differentiability) on the drift coefficient $f$ and on the diffusion coefficient $g$ and require the existence of an inverse of the Lamperti transform to be well-defined. For instance, $L^{p}(\Omega)$-convergence rate of $1$, for all $p \geq 1$, was obtained in \cite{MR4737060} for an SDE with bounded domain with drift coefficient $f \in C^{2}$ and diffusion coefficient $g \in C^{3}$, and $f$ and $g$ satisfying some technical conditions near the boundary points. See, for example, Theorem~\ref{th:BPschemeConv} for the definition of $L^{p}(\Omega)$-convergence used in this work. In addition, the Lamperti transform is intrinsically problem-specific which makes the implementation of Lamperti-based schemes problem-specific.

The objective of this paper is to construct boundary-preserving schemes for the SDE~\eqref{eq:SDE} that do not depend on the Lamperti transform and for which optimal $L^{p}(\Omega)$-convergence can be obtained with lower regularity assumptions on the drift and diffusion coefficients $f$ and $g$. To this end, we develop modified versions of two classical schemes for reflected stochastic differential equations (RSDEs) to construct non-Lamperti-based boundary-preserving schemes for the SDE~\eqref{eq:SDE}. We refer readers to \cite{Chitashvili,Pilipenko2014AnIT,MR873889,MR4521003,MR529332} for works on the theory of RSDEs and to \cite{MR2416011,MR1341164,MR2689982,MR1341162,MR1357657,MR1840835,SLOMINSKI1994197} for numerical approximations of RSDEs.

The main contributions of the paper are the following:
\begin{itemize}
\item We develop the method of artificial barriers (see Section~\ref{sec:artBarr}) and apply it to construct boundary-preserving approximations of the solution of the SDE in~\eqref{eq:SDE} (see Section~\ref{sec:ABEM} and Section~\ref{sec:ABEP}).
\item We prove that the proposed method of artificial barriers exhibits optimal $L^{p}(\Omega)$-convergence order, for every $p \geq 2$, when combined with an optimal order $L^{p}(\Omega)$-convergent scheme for reflected SDEs. Moreover, we also obtain almost sure pathwise convergence as a corollary (see Corollary~\ref{corr:BPschemeConv}). In particular, we prove that, for Lipschitz continuous coefficient functions, artificial barriers combined with a modified projected EM scheme (referred to as the ABEM scheme) exhibits $L^{p}(\Omega)$-convergence order of $1/2-$ for every $p \geq 2$ (see Theorem~\ref{th:ABEMLpConv}) and combined with a modified Euler--Peano scheme (referred to as the ABEP scheme) exhibits $L^{p}(\Omega)$-convergence order of $1/2$ for every $p \geq 2$ (see Theorem~\ref{th:ABEPLpConv}).
\item We numerically verify boundary-preservation and $L^{2}(\Omega)$-convergence of order $1/2-$ for the ABEM scheme.
\end{itemize}

This paper is organised as follows. We first provide the setting in Section~\ref{sec:setting}. In Section~\ref{sec:SkorMapAndRSDE} we provide a concise introduction to Skorokhod maps and the field of RSDEs. The main contribution of this paper is Section~\ref{sec:artBarr} where we develop the method of artificial barriers and we prove $L^{p}(\Omega)$-convergence. As a corollary, we also obtain almost sure pathwise convergence, see Corollary~\ref{corr:BPschemeConv}. In Section~\ref{sec:ABEM} and Section~\ref{sec:ABEP}, we combine artificial barriers with a modified projected EM scheme (referred to as the ABEM scheme) and with a modified Euler--Peano scheme (referred to as the ABEP scheme), respectively. Lastly, we provide numerical experiments for the ABEM scheme in Section~\ref{sec:numExp} to support our theoretical results in Section~\ref{sec:artBarr} and in Section~\ref{sec:ABEM}.

\section{Setting}\label{sec:setting}
This section introduces the needed notation and assumptions for the construction and study of the method of artificial barriers. We let $(\Omega,\mathcal{F},\mathbb{P},\bigl(\mathcal{F}_t\bigr)_{t\ge 0})$ be a filtered probability space that satisfies the usual conditions and we denote by $\mathbb{E}[\cdot]$ the expectation operator. We denote generic constants depending on parameters $a_{1},\ldots,a_{l}$, for some $l \in \mathbb{N}$, by $C(a_{1},\ldots,a_{l})$ that may vary from line to line. The closure of a set $A$ is denoted by $\operatorname{cl} A$. We let $\mathcal{D}[0,T]$, for some fixed $T \in (0,\infty)$, denote the space of real-valued functions defined on $[0,T]$ that are right continuous with left limits (càdlàg).

Most equalities and inequalities in this work are to be understood in the almost sure sense, we omit specifying this everywhere to avoid reiteration.

We consider scalar time-homogeneous Itô stochastic differential equations (SDEs)
\begin{equation}\label{eq:SDEmain}
\left\lbrace
\begin{aligned}
& \diff X(t) = f(X(t)) \diff t + g(X(t)) \diff B(t),\ t \in (0,T], \\ 
& X(0) = x_{0} \in D,
\end{aligned}
\right.
\end{equation}
where $T \in (0,\infty)$, $f,g : \mathbb{R} \to \mathbb{R}$ are functions to be specified below and $D \subset \mathbb{R}$ is an open set such that
\begin{equation*}
\PP (X(t) \in D,\ \forall t \in [0,T]) = 1.
\end{equation*}
Note that $D$ depends on the choices of $f$ and $g$. Let $a \in \mathbb{R}$ and $b \in \mathbb{R} \cup \{ \infty \}$ be such that $a < 0 < b$, and be the boundary points of $D$: $\partial D = \{ a,b \}$. The other cases are treated similarly. We say that a stochastic process $\left( X(t) \right)_{t \in [0,T]}$ is a (strong) solution of \eqref{eq:SDEmain} if the corresponding Itô integral equation
\begin{equation*}
X(t) = x_{0} + \int_{0}^{t} f(X(s)) \diff s + \int_{0}^{t} g(X(s)) \diff B(s)
\end{equation*}
is satisfied, for every $t \in [0,T]$, almost surely, where the second integral is an Itô integral. We refer to \cite{MR2001996} for a well-know textbook on Itô SDEs.

We next state the assumptions on the considered SDE that are needed for the method of artificial barriers to be well-defined and for the convergence analysis. We need the following regularity assumptions on $f$ and $g$ to obtain the desired $L^{p}(\Omega)$-convergence rate in Section~\ref{sec:artBarr}.

\begin{assumption}\label{ass:Lip}
The drift coefficient $f: D \to \mathbb{R}$ and diffusion coefficient $g: D \to \mathbb{R}$ are Lipschitz continuous functions with Lipschitz constants $\Lip(f)$ and $\Lip(g)$, respectively; that is, there exists $\Lip(f), \Lip(f)>0$ such that
\begin{equation*}
|f(x) - f(y)| \leq \Lip(f) |x-y|,\ \forall x,y \in D,
\end{equation*}
and 
\begin{equation*}
|g(x) - g(y)| \leq \Lip(g) |x-y|,\ \forall x,y \in D.
\end{equation*}
\end{assumption}
\begin{remark}
Assumption~\ref{ass:Lip} only assumes that the drift and diffusion coefficient functions are Lipschitz continuous functions on the domain $D$. If $|D| < \infty$, then $f$ and $g$ (view as functions $\mathbb{R} \to \mathbb{R}$) can be locally Lipschitz functions with superlinear growth. See Section~\ref{sec:numExp} for a possible choice of superlinearly growing $f$ and $g$.
\end{remark}
Note that Assumption~\ref{ass:Lip} implies, in particular, that
\begin{equation}\label{eq:fLG}
|f(x)| \leq |f(0)| + \Lip(f) |x|,\ \forall x \in D,
\end{equation}
and
\begin{equation}\label{eq:gLG}
|g(x)| \leq |g(0)| + \Lip(g) |x|,\ \forall x \in D.
\end{equation}
We introduce
\begin{equation*}
L_{f} = \max \left( |f(0)|, \Lip(f) \right),
\end{equation*}
to have one common bound for $\Lip(f)$ and for the linear growth constant in~\eqref{eq:fLG}. For the same reason, we let
\begin{equation*}
L_{g} = \max \left( |g(0)|, \Lip(g) \right).
\end{equation*}

The following assumption is crucial for us, as it motivates the study of boundary-preserving schemes for the SDE~\eqref{eq:SDEmain}. Assumption~\ref{ass:Feller} and similar assumptions are sometimes referred to as Feller conditions. We refer to Chapter $15$ in \cite{Karlin1981ASC} for details on Feller's classification of unattainable boundaries for SDEs.
\begin{assumption}\label{ass:Feller}
The drift coefficient $f: D \to \mathbb{R}$ and diffusion coefficient $g: D \to \mathbb{R}$ are such that the boundary points $a$ and $b$ are unattainable by solutions $X$ of \eqref{eq:SDEmain}; that is, we assume that $\PP (X(t) \in D,\ \forall t \in [0,T]) = 1$.
\end{assumption}

We state well-posedness of the problem~\eqref{eq:SDEmain} under Assumptions~\ref{ass:Lip} and~\ref{ass:Feller} in the following proposition, the proof follows from standard theory for SDEs with globally Lipschitz coefficients (by modifying $f$ and $g$ outside $D$). See, for example, \cite{MR2001996} for a classical book on SDEs. Note that if $|b|<\infty$, then $|X(t)| \leq \max(|a|,|b|)$ for all $t \in [0,T]$, almost surely, by Assumption~\ref{ass:Feller}.
\begin{proposition}\label{prop:xproperties}
If Assumptions~\ref{ass:Lip} and~\ref{ass:Feller} are satisfied, then there exists a unique continuous (strong) solution $X$ to the SDE~\eqref{eq:SDEmain} with finite moments
\begin{equation*}
\E \left[ \sup_{t \in [0,T]} |X(t)|^{p} \right] \leq
\begin{cases}
C(p,L_{f},L_{g},x_{0}), \text{ if } b = \infty,\\
\max(|a|^{p},|b|^{p}), \text{ if } |b| < \infty,
\end{cases}
\end{equation*}
for every $p \geq 2$, and satisfying
\begin{equation*}
\PP \left( X(t) \in D,\ \forall t \in [0,T] \right)= 1.
\end{equation*}
\end{proposition}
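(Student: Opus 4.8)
The plan is to reduce the statement to the classical well-posedness theory for SDEs with globally Lipschitz coefficients, and then to use Assumption~\ref{ass:Feller} to confine the resulting process to $D$ so that it solves the original equation~\eqref{eq:SDEmain}.

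First I would extend $f$ and $g$ from $D$ to globally Lipschitz functions $\tilde{f},\tilde{g}:\mathbb{R}\to\mathbb{R}$ with Lipschitz constants controlled by $L_{f}$ and $L_{g}$. Since $f$ and $g$ are Lipschitz on the interval $D=(a,b)$, they extend continuously to $\operatorname{cl} D$ and then to all of $\mathbb{R}$ with the same Lipschitz constant, for instance via the McShane extension $\tilde{f}(x)=\inf_{y\in D}\bigl(f(y)+L_{f}|x-y|\bigr)$, or simply by constant extension beyond the endpoints using the boundary limits. The extended coefficients are globally Lipschitz and hence satisfy the linear growth bounds~\eqref{eq:fLG} and~\eqref{eq:gLG} on all of $\mathbb{R}$. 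I would then apply the standard existence and uniqueness theorem for SDEs with globally Lipschitz, linearly growing coefficients (see \cite{MR2001996}) to the SDE driven by $\tilde{f}$ and $\tilde{g}$ with initial value $x_{0}\in D$. This yields a unique continuous strong solution $\tilde{X}$ together with the moment estimate $\E\bigl[\sup_{t\in[0,T]}|\tilde{X}(t)|^{p}\bigr]\le C(p,L_{f},L_{g},x_{0})$ for every $p\ge 2$, which is exactly the bound asserted in the case $b=\infty$.

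Next I would invoke Assumption~\ref{ass:Feller}. Since $\tilde{f}=f$ and $\tilde{g}=g$ on $D$, the process $\tilde{X}$ satisfies the original dynamics~\eqref{eq:SDEmain} up to the first exit time from $D$, and the unattainability of $\partial D=\{a,b\}$ guarantees $\PP(\tilde{X}(t)\in D,\ \forall t\in[0,T])=1$. Consequently $\tilde{X}$ never leaves $D$ and therefore solves~\eqref{eq:SDEmain} itself, establishing existence together with the confinement property. Uniqueness transfers in the same manner: any strong solution of~\eqref{eq:SDEmain} remains in $D$ by Assumption~\ref{ass:Feller}, where it coincides with a solution of the extended SDE, so pathwise uniqueness for the latter forces uniqueness for the former. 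Finally, in the two-finite-boundaries case $|b|<\infty$ the confinement $X(t)\in D=(a,b)$ immediately gives $|X(t)|\le\max(|a|,|b|)$ almost surely, yielding the stated moment bound $\max(|a|^{p},|b|^{p})$ directly.

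The main obstacle -- more a point requiring care than a genuine difficulty -- is the interplay between the extension and Assumption~\ref{ass:Feller}: the Feller condition is phrased in terms of solutions of~\eqref{eq:SDEmain}, whereas the solution is constructed from the extended equation, so one must argue via a localization (stopping-time) argument that the two processes agree before any exit from $D$ in order to legitimately apply the unattainability hypothesis to the extended solution.
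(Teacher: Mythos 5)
Your proposal is correct and follows essentially the same route as the paper, which likewise proves the result by modifying $f$ and $g$ outside $D$ to obtain globally Lipschitz coefficients, invoking the standard well-posedness theory from \cite{MR2001996}, and then using Assumption~\ref{ass:Feller} to confine the solution to $D$ (with the $\max(|a|^{p},|b|^{p})$ bound following immediately from that confinement when $|b|<\infty$). Your additional care about the localization argument linking the extended equation to the original one is a reasonable elaboration of a step the paper leaves implicit.
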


\section{Skorokhod maps and Reflected SDEs}\label{sec:SkorMapAndRSDE}
Here we introduce Skorokhod maps and define reflected SDEs (RSDEs) in terms of Skorokhod maps. Shortly put, Skorokhod maps (in our case) are constructed to map any path in $\mathcal{D}[0,T]$ to a path in $\mathcal{D}[0,T]$ confined to a subset of $\mathbb{R}$. RSDEs, also known as SDEs with boundary conditions, are essentially SDEs that are only allowed to take values in a certain subset of $\mathbb{R}$, if the process hits the boundary of the subset then the process is reflected. 

\subsection{Skorokhod maps}\label{sec:SkorMap}
First, we introduce and provide some properties of Skorokhod maps underlying the theory of RSDEs discussed in the next section. In essence, a Skorokhod map takes, in our case, a function in $\mathcal{D}[0,T]$ and outputs another function in $\mathcal{D}[0,T]$ that is confined to the domain under consideration. We say that a Skorokhod map $\Gamma$ is associated with a domain $A$ if $\Gamma (\psi)(t) \in A$ for all $\psi \in \mathcal{D}[0,T]$ and for all $t \in [0,T]$. We refer to Skorokhod maps associated with half-bounded domains $[\alpha,\infty)$, for $\alpha \in \mathbb{R}$, as one-sided Skorokhod maps and we refer to Skorokhod maps associated with bounded domains $[\alpha,\beta]$, for $\alpha < \beta$ and $|\alpha|+|\beta|<\infty$, as two-sided Skorokhod maps. One-sided Skorokhod maps were first considered in \cite{MR4521003}. Moreover, an explicit formula for one-sided Skorokhod maps (see Definition~\ref{def:Gexplicit} below) has been known since long (see, for example, \cite{SLOMINSKI1994197}). An explicit formula for two-sided Skorokhod maps (see Definition~\ref{def:G0aExplicit} below) was only later derived (see \cite{MR2349573}). We are interested in Skorokhod maps for the purpose of constructing implementable numerical schemes and for proving $L^{p}(\Omega)$-convergence. Note, however, that explicit formulas for Skorokhod maps do not guarantee that numerical schemes based on Skorokhod maps can be implemented (see, for example, the ABEP scheme in Section~\ref{sec:ABEP}). Certain schemes based on Skorokhod maps can, however, be implemented. Most importantly, some piecewise constant schemes combined with a Skorokhod map can be implemented (see, for example, the ABEM scheme in Section~\ref{sec:ABEM}).

As our interest is only the one-dimensional case, we define the Skorokhod maps using the known explicit representations. See, for example, \cite{MR529332} for a definition of the Skorokhod map for a convex domain in $d$-dimensional Euclidean space.

\begin{defi}\label{def:Gexplicit}
The one-sided Skorokhod map $\Gamma^{0} : \mathcal{D}[0,T] \to \mathcal{D}[0,T]$ associated with the domain $[0,\infty)$ is the map defined by
\begin{equation}\label{eq:G0explicit}
\Gamma^{0}(\psi)(t) = \psi(t) + \sup_{s  \in [0,t]} \max \{0, - \psi(s) \},\ \psi \in \mathcal{D}[0,T],\ t \in [0,T].
\end{equation}
The one-sided Skorokhod map $\Gamma^{\alpha} : \mathcal{D}[0,T] \to \mathcal{D}[0,T]$, $\alpha \neq 0$, associated with the domain $[\alpha,\infty)$ is defined by
\begin{equation}\label{eq:Gaexplicit}
\Gamma^{\alpha}(\psi)(t) = \Gamma^{0}(\psi - \alpha)(t) + \alpha,\ \psi \in \mathcal{D}[0,T],\ t \in [0,T].
\end{equation}
\end{defi}

We provide two useful properties of the Skorokhod map $\Gamma^{\alpha}$ in the following lemma.
\begin{lemma}\label{lem:GaLipGB}
The one-sided Skorokhod map $\Gamma^{\alpha}: \mathcal{D}[0,T] \to \mathcal{D}[0,T]$, for $\alpha \in \mathbb{R}$, satisfies the following Lipschitz property in the supremum norm
\begin{equation}\label{eq:GaLip}
\sup_{t \in [0,r]} |\Gamma^{\alpha}(\psi_{1})(t)-\Gamma^{\alpha}(\psi_{2})(t)| \leq 2 \sup_{t \in [0,r]} |\psi_{1}(t)-\psi_{2}(t)|,
\end{equation}
for every $\psi_{1},\ \psi_{2} \in \mathcal{D}[0,T]$ and for every $r \in [0,T]$, and the following growth bound in the supremum norm
\begin{equation}\label{eq:GaGB}
\sup_{t \in [0,r]} |\Gamma^{\alpha}(\psi)(t)| \leq 2 \sup_{t \in [0,r]} |\psi(t)| + 3 |\alpha|,
\end{equation}
for every $\psi \in \mathcal{D}[0,T]$ and for every $r \in [0,T]$.
\end{lemma}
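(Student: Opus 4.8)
The plan is to prove both estimates directly from the explicit formula in Definition~\ref{def:Gexplicit}, reducing everything to elementary properties of the supremum. Since $\Gamma^{\alpha}(\psi)(t) = \Gamma^{0}(\psi)(t) + \alpha$, the additive shift $\alpha$ cancels in the difference appearing in~\eqref{eq:GaLip}, so it suffices to establish the Lipschitz property for $\Gamma^{0}$ and then simply track the constant $\alpha$ for the growth bound~\eqref{eq:GaGB}.

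For the Lipschitz estimate, I would write $\Gamma^{0}(\psi)(t) = \psi(t) + m(\psi)(t)$ where $m(\psi)(t) = \sup_{s \in [0,t]} \max\{0,-\psi(s)\}$, so that
\begin{equation*}
\Gamma^{0}(\psi_{1})(t) - \Gamma^{0}(\psi_{2})(t) = \bigl(\psi_{1}(t) - \psi_{2}(t)\bigr) + \bigl(m(\psi_{1})(t) - m(\psi_{2})(t)\bigr).
\end{equation*}
The key observation is that $\psi \mapsto m(\psi)(t)$ is nonexpansive in the supremum norm up to $t$: the map $x \mapsto \max\{0,-x\}$ is $1$-Lipschitz, and taking a supremum over $s \in [0,t]$ preserves the $1$-Lipschitz property, giving $|m(\psi_{1})(t) - m(\psi_{2})(t)| \leq \sup_{s \in [0,t]} |\psi_{1}(s) - \psi_{2}(s)|$. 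Applying the triangle inequality to the displayed identity, bounding $|\psi_{1}(t)-\psi_{2}(t)|$ by its supremum over $[0,t]$, and then taking the supremum over $t \in [0,r]$ yields the factor $2$ in~\eqref{eq:GaLip}.

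For the growth bound I would proceed analogously, using $|\Gamma^{0}(\psi)(t)| \leq |\psi(t)| + |m(\psi)(t)|$ together with the crude estimate $|m(\psi)(t)| = \sup_{s\in[0,t]} \max\{0,-\psi(s)\} \leq \sup_{s \in [0,t]} |\psi(s)|$, so that taking the supremum over $t \in [0,r]$ gives $\sup_{t\in[0,r]} |\Gamma^{0}(\psi)(t)| \leq 2\sup_{t\in[0,r]}|\psi(t)|$; adding the shift $|\alpha|$ produces~\eqref{eq:GaGB}. The main (and only mild) obstacle is the standard fact that $\bigl|\sup_{s} h_{1}(s) - \sup_{s} h_{2}(s)\bigr| \leq \sup_{s} |h_{1}(s) - h_{2}(s)|$; I would either invoke it as elementary or spell it out by noting $h_1(s) \le h_2(s) + \sup_u|h_1(u)-h_2(u)|$ for every $s$, taking the supremum over $s$ on the left, and then symmetrizing. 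Everything else is bookkeeping with the triangle inequality and monotonicity of the supremum.
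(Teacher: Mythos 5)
Your argument is correct and complete. The decomposition $\Gamma^{0}(\psi)(t) = \psi(t) + m(\psi)(t)$ with $m(\psi)(t) = \sup_{s\in[0,t]}\max\{0,-\psi(s)\}$, the $1$-Lipschitz property of $x \mapsto \max\{0,-x\}$, and the elementary inequality $\bigl|\sup_{s}h_{1}(s)-\sup_{s}h_{2}(s)\bigr| \le \sup_{s}|h_{1}(s)-h_{2}(s)|$ (valid here because c\`adl\`ag functions are bounded on compact intervals, so all suprema are finite) together give exactly the factor $2$ in~\eqref{eq:GaLip}, and your growth bound is the same one-line computation the paper has in mind. The only difference from the paper is one of presentation: the paper reduces to $\alpha=0$ via~\eqref{eq:Gaexplicit} and then \emph{cites} the literature for the Lipschitz estimate (in the setting of continuous paths), stating only that the growth bound ``follows directly'' from~\eqref{eq:G0explicit}; you instead supply a self-contained elementary proof of the Lipschitz estimate directly from the explicit formula, which has the mild advantage of covering the c\`adl\`ag case used throughout the paper without appeal to an external reference. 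No gaps.
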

Note that the proof of Lemma~\ref{lem:GaLipGB} reduces to the case $\alpha = 0$ by~\eqref{eq:Gaexplicit}. We refer to \cite{MR1110990} for a proof of (a more general version in the space of real-valued continuous functions on $[0,T]$) \eqref{eq:GaLip} and the estimate in equation~\eqref{eq:GaGB} with $\alpha=0$ follows directly from the representation for $\Gamma^{0}$ in equation~\eqref{eq:G0explicit}. Note that the properties in Lemma~\ref{lem:GaLipGB} (and corresponding lemmas below) are important for the properties of RSDEs in the following section (see Proposition~\ref{prop:RSDEwellDef}).

We next introduce the two-sided Skorokhod maps associated with bounded domains.
\begin{defi}\label{def:G0aExplicit}
The two-sided Skorokhod map $\Gamma^{0,\alpha} : \mathcal{D}[0,T] \to \mathcal{D}[0,T]$, for $\alpha>0$, associated with the domain $[0,\alpha]$ is the map defined by
\begin{equation}\label{eq:G0aExplicit}
\Gamma^{0,\alpha} = \Lambda^{\alpha} \circ \Gamma^{0},
\end{equation}
where $\Gamma^{0}$ is the one-sided Skorokhod map associated with the domain $[0,\infty)$ defined in equation~\eqref{eq:G0explicit} and where the map $\Lambda^{\alpha}: \mathcal{D}[0,T] \to \mathcal{D}[0,T]$ is defined by
\begin{equation*}
\Lambda^{\alpha}(\psi)(t) = \psi(t) - \sup_{s \in [0,t]} \left( \min \left( \max(0,\psi(s)-\alpha) , \inf_{u \in [s,t]} \psi(u) \right) \right),\ \psi \in \mathcal{D}[0,T]. 
\end{equation*}
\end{defi}
We refer to \cite{MR2349573} for more details on~\eqref{eq:G0aExplicit}.

Similarly to $\Gamma^{\alpha}$, $\Gamma^{0,\alpha}$ satisfies a Lipschitz estimate in the supremum norm and a bound in the supremum norm that we state in the following lemma.
\begin{lemma}\label{lem:G0aProp}
The two-sided Skorokhod map $\Gamma^{0,\alpha}: \mathcal{D}[0,T] \to \mathcal{D}[0,T]$, for $\alpha>0$, satisfies the following Lipschitz property in the supremum norm
\begin{equation}\label{eq:G0aLip}
\sup_{t \in [0,r]} |\Gamma^{0,\alpha}(\psi_{1})(t)-\Gamma^{0,\alpha}(\psi_{2})(t)| \leq 4 \sup_{t \in [0,r]} |\psi_{1}(t)-\psi_{2}(t)|,
\end{equation}
for every $\psi_{1},\ \psi_{2} \in \mathcal{D}[0,T]$ and for every $r \in [0,T]$, and is bounded in the supremum norm
\begin{equation}\label{eq:G0aGB}
\sup_{t \in [0,r]} |\Gamma^{0,\alpha}(\psi)(t)| \leq \alpha,
\end{equation}
for every $\psi \in \mathcal{D}[0,T]$ and for every $r \in [0,T]$.
\end{lemma}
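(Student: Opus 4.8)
The plan is to prove Lemma~\ref{lem:G0aProp} by exploiting the composition structure $\Gamma^{0,\alpha} = \Lambda^{\alpha} \circ \Gamma^{0}$ given in~\eqref{eq:G0aExplicit}, reducing both claims to already-available facts about $\Gamma^{0}$ together with a direct analysis of the auxiliary map $\Lambda^{\alpha}$. For the boundedness estimate~\eqref{eq:G0aGB}, I would argue that the image of $\Gamma^{0,\alpha}$ is contained in $[0,\alpha]$, which immediately yields the supremum bound by $\alpha$; this is precisely the statement that $\Gamma^{0,\alpha}$ is associated with the domain $[0,\alpha]$ in the sense defined above. Concretely, $\Gamma^{0}(\psi)(t) \geq 0$ for every $t$ by the explicit formula~\eqref{eq:G0explicit}, and one then checks from the definition of $\Lambda^{\alpha}$ that $\Lambda^{\alpha}$ maps nonnegative paths into $[0,\alpha]$. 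Writing $\phi = \Gamma^{0}(\psi) \geq 0$, one shows both $\Lambda^{\alpha}(\phi)(t) \geq 0$ and $\Lambda^{\alpha}(\phi)(t) \leq \alpha$ by analysing the subtracted supremum term pointwise.

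For the Lipschitz estimate~\eqref{eq:G0aLip}, the natural route is to establish a Lipschitz property for $\Lambda^{\alpha}$ in the supremum norm and then compose it with the known Lipschitz property of $\Gamma^{0}$ from Lemma~\ref{lem:GaLipGB}. If I can show that $\Lambda^{\alpha}$ satisfies
\begin{equation*}
\sup_{t \in [0,r]} |\Lambda^{\alpha}(\phi_{1})(t)-\Lambda^{\alpha}(\phi_{2})(t)| \leq 2 \sup_{t \in [0,r]} |\phi_{1}(t)-\phi_{2}(t)|
\end{equation*}
for all $\phi_{1},\phi_{2} \in \mathcal{D}[0,T]$ and all $r \in [0,T]$, then chaining this with the constant $2$ for $\Gamma^{0}$ in~\eqref{eq:GaLip} (applied with $\alpha = 0$) gives exactly the constant $4$ appearing in~\eqref{eq:G0aLip}. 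So the whole Lipschitz statement hinges on a Lipschitz bound with constant $2$ for the map $\Lambda^{\alpha}$.

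The main obstacle, and the step requiring genuine care, is establishing that Lipschitz bound for $\Lambda^{\alpha}$, because its defining formula involves a nested $\sup_{s}\min(\cdot,\inf_{u})$ structure. The strategy here is to use the elementary facts that $x \mapsto \max(0,x-\alpha)$ is $1$-Lipschitz, that the pointwise operations $\min$ and the running supremum/infimum functionals are all nonexpansive in the supremum norm, and that a sum or composition of such nonexpansive building blocks controls the overall Lipschitz constant additively. Since $\Lambda^{\alpha}(\phi)(t) = \phi(t) - S(\phi)(t)$ where $S$ denotes the nested-functional term, the identity term contributes Lipschitz constant $1$ and I would show that $S$ is itself $1$-Lipschitz in the supremum norm, so that $\Lambda^{\alpha}$ is $2$-Lipschitz by the triangle inequality. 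The delicate point is verifying that the inner functional $\phi \mapsto \min(\max(0,\phi(s)-\alpha), \inf_{u \in [s,t]}\phi(u))$, after taking $\sup_{s \in [0,t]}$, does not amplify the Lipschitz constant beyond $1$; this follows because each of $\max(0,\cdot-\alpha)$, the infimum functional, the pointwise $\min$, and the supremum functional is individually nonexpansive, and composing nonexpansive maps keeps the constant at $1$.

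Alternatively, rather than reproving a Lipschitz estimate for $\Lambda^{\alpha}$ from scratch, I could simply cite~\cite{MR2349573}, where the explicit two-sided Skorokhod representation~\eqref{eq:G0aExplicit} and its Lipschitz continuity were derived; the constant $4$ is then a transparent consequence of composing the two one-sided-type Lipschitz bounds. In either case the boundedness~\eqref{eq:G0aGB} is immediate from the fact that $\Gamma^{0,\alpha}$ takes values in $[0,\alpha]$, and I expect the bulk of the written proof to consist of the nonexpansiveness bookkeeping for $\Lambda^{\alpha}$ needed to pin down the Lipschitz constant.
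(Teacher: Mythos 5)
Your proposal is correct, and it is worth noting that the paper itself does not actually prove the Lipschitz estimate \eqref{eq:G0aLip}: it cites \cite{MR2349573} for that part and observes that \eqref{eq:G0aGB} is immediate because $\Gamma^{0,\alpha}$ takes values in $[0,\alpha]$. Your fallback route (citing the reference) therefore coincides exactly with what the paper does, while your primary route is genuinely more self-contained. Writing $\Lambda^{\alpha}(\phi)=\phi-S(\phi)$ and showing that the nested functional $S$ is nonexpansive in the supremum norm is sound: $x\mapsto\max(0,x-\alpha)$, the running infimum, the pointwise minimum, and the running supremum are each $1$-Lipschitz, $\lvert\min(a_{1},b_{1})-\min(a_{2},b_{2})\rvert\leq\max(\lvert a_{1}-a_{2}\rvert,\lvert b_{1}-b_{2}\rvert)$, and $S(\phi)(t)$ depends only on $\phi$ restricted to $[0,t]$, so the bound localises to $[0,r]$ as the statement requires. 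This gives $\Lambda^{\alpha}$ Lipschitz constant $2$, and composing with the constant $2$ for $\Gamma^{0}$ from \eqref{eq:GaLip} yields the constant $4$ in \eqref{eq:G0aLip}; this is essentially a reconstruction of the argument in \cite{MR2349573}, bought at the cost of the bookkeeping you describe. Your verification that $\Lambda^{\alpha}$ maps nonnegative paths into $[0,\alpha]$ also works (taking $s=t$ inside the supremum shows $S(\phi)(t)\geq\phi(t)-\alpha$, hence the upper bound, and bounding the minimum by the infimum term shows $S(\phi)(t)\leq\phi(t)$, hence nonnegativity); the paper instead takes this containment for granted from the definition of a Skorokhod map associated with $[0,\alpha]$.
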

We refer to \cite{MR2349573} for a proof of the estimate in~\eqref{eq:G0aLip} and the estimate in~\eqref{eq:G0aGB} is immediate since $\Gamma^{0,\alpha}(\psi)(t) \in [0,\alpha]$ for every $t \in [0,T]$.

Next, we introduce the Skorokhod maps needed for the method of artificial barriers developed in Section~\ref{sec:artBarr}. To this end, we first introduce a family of domains $\{ \DDN \}_{N \in \mathbb{N}}$: For $N \in \mathbb{N}$, let
\begin{equation*}
\DDN=
\begin{cases}
(a + 1/N, \infty), \text{ if } b = \infty,\\
(a + 1/N, b - 1/N), \text{ if } |b| < \infty.
\end{cases}
\end{equation*}
We are interested in the Skorokhod map $\Gamma$ associated with $\CDD = (a,b) \cup \{ a,b \}$ and the family of Skorokhod maps $\{ \Gamma_{N} \}_{N \in \mathbb{N}}$ each associated with $\CDDN = \partial \DDN \cup \DDN$. The maps $\Gamma$ and $\Gamma_{N}$ can be expressed in terms of $\Gamma^{\alpha}$ and $\Gamma^{0,\alpha}$ as
\begin{equation}\label{eq:GabExplicit}
\Gamma(\psi)(t)=
\begin{cases}
\Gamma^{a}(\psi)(t), \text{ if } b = \infty,\\
\Gamma^{0,(b-a)}(\psi - a)(t) + a, \text{ if } |b| < \infty,
\end{cases}
\end{equation}
and
\begin{equation}\label{eq:GNexplicit}
\Gamma_{N}(\psi)(t)=
\begin{cases}
\Gamma^{a + 1/N}(\psi)(t), \text{ if } b = \infty,\\
\Gamma^{0,(b-a-2/N)}\left(\psi - (a + 1/N) \right)(t) + (a + 1/N), \text{ if } |b| < \infty,
\end{cases}
\end{equation}
both for $\psi \in \mathcal{D}[0,T]$ and $t \in [0,T]$. By~\eqref{eq:GabExplicit}, Lemma~\ref{lem:GaLipGB}, and Lemma~\ref{lem:G0aProp}, $\Gamma$ satisfies the Lipschitz estimate 
\begin{equation*}
\sup_{t \in [0,r]} |\Gamma(\psi_{1})(t)-\Gamma(\psi_{2})(t)| \leq 4 \sup_{t \in [0,r]} |\psi_{1}(t)-\psi_{2}(t)|,
\end{equation*}
for $\psi_{1},\ \psi_{2} \in \mathcal{D}[0,T]$ and for $r \in [0,T]$, and the growth bound
\begin{equation*}
\sup_{t \in [0,r]} |\Gamma(\psi)(t)| \leq
\begin{cases}
2 \sup_{t \in [0,r]} |\psi(t)| + 3 |a|, \text{ if } b = \infty,\\
\max(|a|,|b|), \text{ if } |b| < \infty,
\end{cases}
\end{equation*}
for $\psi \in \mathcal{D}[0,T]$ and $r \in [0,T]$. The following lemma states that $\{ \Gamma_{N} \}_{N \in \mathbb{N}}$ satisfies Lipschitz bounds and growth estimates, uniformly in $N \in \mathbb{N}$.
\begin{lemma}\label{lem:GNLipGB}
The family of Skorokhod maps $\{ \Gamma_{N} \}_{N \in \mathbb{N}}$ satisfies
\begin{equation}\label{eq:GNLip}
\sup_{N \in \mathbb{N}} \sup_{t \in [0,r]} |\Gamma_{N}(\psi_{1})(t)-\Gamma_{N}(\psi_{2})(t)| \leq 4 \sup_{t \in [0,r]} |\psi_{1}(t)-\psi_{2}(t)|,
\end{equation}
for all $\psi_{1},\ \psi_{2} \in \mathcal{D}[0,T]$ and for $r \in [0,T]$, and
\begin{equation}\label{eq:GNGB}
\sup_{N \in \mathbb{N}} \sup_{t \in [0,r]} |\Gamma_{N}(\psi)(t)| \leq
\begin{cases}
2 \sup_{t \in [0,r]} |\psi(t)| + 3 |a| + 3, \text{ if } b = \infty,\\
\max(|a|,|b|), \text{ if } |b| < \infty,
\end{cases}
\end{equation}
for all $\psi \in \mathcal{D}[0,T]$ and for $r \in [0,T]$.
\end{lemma}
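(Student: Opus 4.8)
The plan is to reduce the statement to the two preceding lemmas by exploiting the explicit representation of $\Gamma_{N}$ in~\eqref{eq:GNexplicit}, treating the one-finite-boundary case ($b=\infty$) and the two-finite-boundaries case ($|b|<\infty$) separately. In both cases $\Gamma_{N}$ is, up to an additive constant depending on $N$, one of the reference maps $\Gamma^{0}$ or $\Gamma^{0,\alpha}$ whose Lipschitz and boundedness properties are already recorded in Lemma~\ref{lem:GaLipGB} and Lemma~\ref{lem:G0aProp}. The crucial feature I would highlight is that the Lipschitz constants appearing there ($2$ for $\Gamma^{0}$, $4$ for $\Gamma^{0,\alpha}$) do not depend on the parameter $\alpha$, so no dependence on $N$ can enter through them; the only $N$-dependence sits in explicit additive shifts of the form $a+1/N$, which are controlled uniformly using $1/N\le 1$.

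For the Lipschitz estimate~\eqref{eq:GNLip}, I would first observe that in the difference $\Gamma_{N}(\psi_{1})-\Gamma_{N}(\psi_{2})$ the additive constant (either the $a+1/N$ from~\eqref{eq:Gaexplicit} or the $a+1/N$ in the two-sided line of~\eqref{eq:GNexplicit}) cancels, leaving $\Gamma^{0}(\psi_{1})-\Gamma^{0}(\psi_{2})$ when $b=\infty$ and $\Gamma^{0,(b-a-2/N)}(\psi_{1})-\Gamma^{0,(b-a-2/N)}(\psi_{2})$ when $|b|<\infty$. Applying~\eqref{eq:GaLip} with $\alpha=0$ in the first case gives the Lipschitz constant $2$, and applying~\eqref{eq:G0aLip} in the second case gives the Lipschitz constant $4$; since these bounds hold with constants independent of $N$, taking the supremum over $N$ and the maximum of the two constants yields~\eqref{eq:GNLip} with constant $4$.

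For the growth bound~\eqref{eq:GNGB} I would again split into the two cases. When $b=\infty$, I apply~\eqref{eq:GaGB} to $\Gamma_{N}=\Gamma^{a+1/N}$ directly, obtaining $\sup_{t\in[0,r]}|\Gamma_{N}(\psi)(t)|\le 2\sup_{t\in[0,r]}|\psi(t)|+|a+1/N|$, and then bound $|a+1/N|\le |a|+1$, which is uniform in $N$. When $|b|<\infty$, the argument is even more direct: by the range property underlying Lemma~\ref{lem:G0aProp} (equivalently~\eqref{eq:G0aGB}), $\Gamma^{0,(b-a-2/N)}(\psi)(t)\in[0,\,b-a-2/N]$ for every $t$, so $\Gamma_{N}(\psi)(t)=\Gamma^{0,(b-a-2/N)}(\psi)(t)+(a+1/N)\in[a+1/N,\,b-1/N]\subset(a,b)$; since every $x\in(a,b)$ satisfies $-\max(|a|,|b|)\le a<x<b\le\max(|a|,|b|)$, we get $|\Gamma_{N}(\psi)(t)|\le\max(|a|,|b|)$ for all $t$ and all $N$, giving the second line of~\eqref{eq:GNGB}.

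Finally, I would note that this lemma is not deep: it is essentially bookkeeping on top of Lemma~\ref{lem:GaLipGB} and Lemma~\ref{lem:G0aProp}, and the only point requiring genuine attention is the uniformity in $N$, which is precisely why I isolate the parameter-free nature of the Lipschitz constants and the elementary bound $1/N\le 1$ on the shifts. The one technical caveat I would record is that in the two-finite-boundaries case the map $\Gamma^{0,(b-a-2/N)}$ is only defined once $b-a-2/N>0$, i.e.\ for $N>2/(b-a)$ (equivalently $\DDN\ne\emptyset$); the estimates are understood for such $N$, and since we are ultimately interested in the regime $N\to\infty$ this restriction is harmless.
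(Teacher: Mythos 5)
Your proposal is correct and follows essentially the same route as the paper's proof: both reduce~\eqref{eq:GNLip} and~\eqref{eq:GNGB} to Lemma~\ref{lem:GaLipGB} and Lemma~\ref{lem:G0aProp} via the representation~\eqref{eq:GNexplicit}, observe that the Lipschitz constants there are independent of the domain parameter, bound the shift $|a+1/N|\le|a|+1$ in the half-bounded case, and use the range property $\Gamma_{N}(\psi)(t)\in\CDDN\subset[a,b]$ in the bounded case. Your remark on the restriction $N>2/(b-a)$ matches the paper's earlier remark on that point.
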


\begin{proof}[Proof of Lemma~\ref{lem:GNLipGB}]
Suppose first that $|b|<\infty$. The growth bound in~\eqref{eq:GNGB} is immediate since $\Gamma_{N}(\psi)(t) \in \CDDN \subset \CDD = [a,b]$ for all $t \in [0,T]$. The Lipschitz bound in~\eqref{eq:GNLip} follows from reducing it to the known estimates in Lemma~\ref{lem:G0aProp} using~\eqref{eq:GNexplicit} for $|b|<\infty$
\begin{align*}
\sup_{t \in [0,r]} |\Gamma_{N}(\psi_{1})(t)-\Gamma_{N}(\psi_{2})(t)| &= \sup_{t \in [0,r]} |\Gamma^{0,(b-a-2/N)}(\psi_{1})(t)-\Gamma^{0,(b-a-2/N)}(\psi_{2})(t)| \\ &\leq 4 \sup_{t \in [0,r]} |\psi_{1}(t)-\psi_{2}(t)|.
\end{align*}
The Lipschitz bound for the case $b=\infty$ is proved similarly:
\begin{align*}
\sup_{t \in [0,r]} |\Gamma_{N}(\psi_{1})(t)-\Gamma_{N}(\psi_{2})(t)| &= \sup_{t \in [0,r]} |\Gamma^{a + 1/N}(\psi_{1})(t)-\Gamma^{a + 1/N}(\psi_{2})(t)| \\ &\leq 2 \sup_{t \in [0,r]} |\psi_{1}(t)-\psi_{2}(t)| \\ &\leq 4 \sup_{t \in [0,r]} |\psi_{1}(t)-\psi_{2}(t)|
\end{align*}
using~\eqref{eq:GNexplicit} for $b=\infty$ and Lemma~\ref{lem:GaLipGB}. The growth bound for $b = \infty$ follows from~\eqref{eq:GaGB}
\begin{equation*}
\sup_{t \in [0,r]} |\Gamma_{N}(\psi)(t)| = \sup_{t \in [0,r]} |\Gamma^{a + 1/N}(\psi)(t)| \leq 2 \sup_{t \in [0,r]} |\psi(t)| + 3 \left|a + \frac{1}{N} \right| \leq 2 \sup_{t \in [0,r]} |\psi(t)| + 3 |a| + 3.
\end{equation*}
\end{proof}
In the following sections, we will not separate the one-finite-boundary case and the two-finite-boundaries case and we will apply Lemma~\ref{lem:GNLipGB} to both cases.

\subsection{Reflected SDEs}\label{sec:RSDE}
Next, we provide an overview of reflected SDEs (RSDEs), we refer to \cite{Pilipenko2014AnIT} for a more complete introduction to RSDEs. We define RSDEs for the Skorokhod map $\Gamma$ associated with $\CDD = [a,b]$, for $|a|+|b|<\infty$, and the other considered domains are treated analogously. We first provide a short intuitive description and the idea of RSDEs, and then we provide a proper definition. In this section, instead of imposing the assumptions in Section~\ref{sec:setting}, we assume that $f,g: \mathbb{R} \to \mathbb{R}$ are globally Lipschitz functions.

For simplicity, we start with the case of one barrier at $0$ ($a=0$ and $b=\infty$) with continuous sample paths. The goal in this case is to define a stochastic process $\xi$ with the following three properties:
\begin{enumerate}
\item $\xi(t) \geq 0$ for all $t \in [0,T]$ and $t \mapsto \xi(t)$ is continuous, almost surely.
\item $\xi$ has dynamics according to the Itô SDE
\begin{equation*}
\diff \xi(t) = f(\xi(t)) \diff t + g(\xi(t)) \diff B(t)
\end{equation*}
whenever $\xi(t) >0$.
\item $\xi$ is reflected back into the domain $(0,\infty)$ whenever $\xi$ hits $0$ to avoid $\xi$ entering $(-\infty,0)$.
\end{enumerate}
Note that, as there is no mentioning of the reflection in (2) above, the reflection dynamics in (3) must disappear as soon as $\xi$ re-enters $(0,\infty)$.  The above problem was first studied by Skorokhod in \cite{MR4521003} who proposed to formalise the problem, thereafter known as the Skorokhod problem, as finding a pair of non-anticipating processes $\xi(t) \geq 0$ and $L(t)$, for $t \in [0,T]$, such that
\begin{equation}\label{eq:xiDef}
\xi(t) = \xi_{0} + \int_{0}^{t} f(\xi(s)) \diff s + \int_{0}^{t} g(\xi(s)) \diff B(s) + L(t),
\end{equation}
for all $t \in [0,T]$, almost surely, $L$ is non-decreasing, $L(0)=0$ and
\begin{equation}\label{eq:Ldef}
L(t) = \int_{0}^{t} \mathbb{1}_{\xi(s)=0} \diff L(s),\ t \in [0,T].
\end{equation}
Note that the property~\eqref{eq:Ldef} states that the process $L(t)$ can only increase when $\xi(t) = 0$ and that $L(t)$ then acts in~\eqref{eq:xiDef} as a positive driving force to push $\xi$ into $(0,\infty)$. We use the letter $L$ to emphasise that it represent a lower barrier process. Moreover, under Lipschitz conditions on $f$ and $g$, there exists a unique solution to the Skorokhod problem. We refer to \cite{Pilipenko2014AnIT} for more details. The considered setting in Section~\ref{sec:setting} reduces to the Lipschitz case by Assumption~\ref{ass:Feller}.

Next, we provide the proper mathematical definition of RSDEs with two barriers at $a$ and $b$ (with $|a|+|b|<\infty$).
\begin{defi}\label{def:RSDE}
A triplet of $\mathcal{F}_{t}$-adapted processes $(\xi(t),U(t),L(t))_{t \in [0,T]}$ is a solution of the reflected SDE
\begin{equation*}
\diff \xi(t) = \xi_{0} + f(\xi(t)) \diff t + g(\xi(t)) \diff B(t) - \diff U(t) + \diff L(t),\ t \in [0,T],
\end{equation*}
with reflection at $a$ and $b$, with $-\infty<a<b<\infty$, and the initial condition $\xi(0) = \xi_{0} \in [a,b]$ if the following holds:
\begin{enumerate}
\item $\xi(t) \in [a,b]$ for every $t \in [0,T]$, almost surely.
\item $t \mapsto U(t)$ and $t \mapsto L(t)$ are non-decreasing and $U(0) = L(0)=0$.
\item $\int_{0}^{t} \mathbb{1}_{\xi(s) \in (\infty,b)} \diff U(s) = \int_{0}^{t} \mathbb{1}_{\xi(s) \in (a,\infty)} \diff L(s) = 0$, for every $t \in [0,T]$.
\item $\xi(t) = \xi(0) + \int_{0}^{t} f(\xi(s)) \diff s + \int_{0}^{t} g(\xi(s)) \diff B(s) - U(t) + L(t)$, for every $t \in [0,T]$, almost surely.
\item All the above integrals are well-defined.
\end{enumerate}
\end{defi}
Note that if $w$ solves the Itô functional SDE
\begin{equation}\label{eq:wItoSDE}
w(t) = \xi_{0} + \int_{0}^{t} f(\Gamma(w)(s)) \diff s + \int_{0}^{t} g(\Gamma(w)(s)) \diff B(s),\ t \in [0,T],
\end{equation}
then there exist $U(t)$ and $L(t)$ such that $(\Gamma(w)(t),U(t),L(t))_{t \in [0,T]}$ satisfy Definition~\ref{def:RSDE} (see, for example, Section $1.2$ in \cite{MR2349573}), where we recall that $\Gamma$ is the Skorokhod map associated with $[a,b]$ (see Section~\ref{sec:SkorMap}). We say that $(\xi,w,L-U)$ is an associated triple on $[a,b]$ if Definition~\ref{def:RSDE} and~\eqref{eq:wItoSDE} holds.
\begin{remark}
If $\xi$ cannot hit one of the boundary points (for example $b$) then the corresponding barrier process (in this example $U$) can be completely removed or set equal to $0$ (in this example $U \equiv 0$) in the definition above. This follows from uniqueness of solutions to RSDEs, see below or \cite{Pilipenko2014AnIT} for more details. Of particular interest in this paper is the case where $\xi$ cannot hit either of the boundary points (see Assumption~\ref{ass:Feller}), which implies that both barrier processes will, in fact, be $0$ (meaning that $U \equiv 0 \equiv L$), this enables us to connect the original SDE in~\eqref{eq:SDE} to RSDEs (see Proposition~\ref{prop:xNxiNequal}).
\end{remark}

We now collect some well-known results for RSDEs that will be needed for the convergence analysis. We refer to \cite{Pilipenko2014AnIT} for a proof, but the existence and uniqueness is due to \cite{MR4521003}.
\begin{proposition}\label{prop:RSDEwellDef}
If $f,g: \mathbb{R} \to \mathbb{R}$ in Definition~\ref{def:RSDE} are globally Lipschitz continuous, then there exists a unique continuous solution $\xi$ to the RSDE in Definition~\ref{def:RSDE}. Moreover, $\xi$ satisfies moment bounds
\begin{equation*}
\E \left[ \sup_{t \in [0,T]} |\xi(t)|^{p} \right] \leq
\begin{cases}
C(p,T,L_{f},L_{g},\xi_{0}), \text{ if } b = \infty,\\
\max(|a|^{p},|b|^{p}), \text{ if } |b| < \infty,
\end{cases}
\end{equation*}
for every $p \geq 2$, and the time increments satisfy
\begin{equation*}
\E \left[ \left| \xi(t) - \xi(s) \right|^{p} \right] \leq C(p,T,L_{f},L_{g},\xi_{0}) |t-s|^{p/2},
\end{equation*}
for every $p \geq 2$, for every $t,s \in [0,T]$.
\end{proposition}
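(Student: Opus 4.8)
The plan is to reduce the reflected SDE to the ordinary Itô equation~\eqref{eq:wItoSDE} for the ``free'' process $w$, to solve that equation by a Picard fixed-point argument driven by the Lipschitz property of the Skorokhod map $\Gamma$, and then to recover $\xi = \Gamma(w)$ together with the reflection terms $U,L$ via the Skorokhod decomposition recorded after~\eqref{eq:wItoSDE}. Concretely, I would work on the Banach space $\mathcal{H}^{p}$ of continuous $\mathcal{F}_{t}$-adapted processes $w$ with $\E[\sup_{t\in[0,T]}|w(t)|^{p}] < \infty$ and define $\mathcal{T}(w)(t) = \xi_{0} + \int_{0}^{t} f(\Gamma(w)(s))\diff s + \int_{0}^{t} g(\Gamma(w)(s))\diff B(s)$. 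Applying the Burkholder--Davis--Gundy inequality to the stochastic integral, Hölder's inequality to the drift integral, the Lipschitz continuity of $f,g$, and finally the Lipschitz bound for $\Gamma$ (equations~\eqref{eq:GaLip} and~\eqref{eq:G0aLip}, collected in Section~\ref{sec:SkorMap}), I would obtain the one-step estimate $\E[\sup_{u\le r}|\mathcal{T}(w_{1})(u) - \mathcal{T}(w_{2})(u)|^{p}] \le C(p,T,L_{f},L_{g})\int_{0}^{r} \E[\sup_{u\le s}|w_{1}(u)-w_{2}(u)|^{p}]\diff s$. Iterating this inequality along $w^{n+1} = \mathcal{T}(w^{n})$ yields factorially decaying bounds of order $(Cr)^{n}/n!$, so the iterates are Cauchy in $\mathcal{H}^{p}$; the limit $w$ is the unique fixed point, and Gronwall's inequality applied to the same estimate gives uniqueness. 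That $\mathcal{T}$ maps $\mathcal{H}^{p}$ into itself follows from the linear growth~\eqref{eq:fLG}--\eqref{eq:gLG} combined with the growth bound for $\Gamma$ (e.g.~\eqref{eq:GaGB},~\eqref{eq:G0aGB}).

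For the moment bound I would split into the two cases. When $|b| < \infty$ the bound is immediate: property (1) of Definition~\ref{def:RSDE} gives $\xi(t) \in [a,b]$, hence $\sup_{t}|\xi(t)| \le \max(|a|,|b|)$ pathwise. When $b = \infty$ I would use $\xi = \Gamma(w)$ and the growth bound~\eqref{eq:GaGB} to reduce matters to bounding $\E[\sup_{t}|w(t)|^{p}]$; applying BDG, Hölder, the linear growth~\eqref{eq:fLG}--\eqref{eq:gLG}, and~\eqref{eq:GaGB} once more produces an integral inequality $\E[\sup_{u\le r}|w(u)|^{p}] \le C(1+|\xi_{0}|^{p}) + C\int_{0}^{r} \E[\sup_{u\le s}|w(u)|^{p}]\diff s$, which Gronwall's lemma closes. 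The standard point that the left-hand side must a priori be finite is handled by first localising with stopping times $\tau_{M} = \inf\{t : |w(t)| \ge M\}$ and then letting $M \to \infty$ by monotone convergence.

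The step I expect to be the main obstacle is the Hölder-type bound on the time increments, because the decomposition $\xi(t) - \xi(s) = \int_{s}^{t} f(\xi)\diff u + \int_{s}^{t} g(\xi)\diff B + (L(t)-L(s)) - (U(t)-U(s))$ involves the reflection terms $U,L$, which are only of bounded variation and carry no obvious $|t-s|^{1/2}$ modulus. My plan is to bypass $U$ and $L$ entirely by exploiting the flow (restriction) property of the Skorokhod map: on $[s,t]$ the process $\xi$ coincides with $\Gamma$ applied to $u \mapsto \xi(s) + (w(u)-w(s))$, where $\xi(s)$ lies in the domain. Since $\Gamma$ applied to the constant path $\xi(s)$ returns $\xi(s)$, the Lipschitz bound for $\Gamma$ gives the pathwise estimate $\sup_{u\in[s,t]}|\xi(u) - \xi(s)| \le C\sup_{u\in[s,t]}\big|\int_{s}^{u} f(\xi)\diff r + \int_{s}^{u} g(\xi)\diff B\big|$. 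Taking $p$-th moments and applying BDG, Hölder, the linear growth bounds, and the moment bound already established for $\sup_{t}|\xi(t)|$ then yields $\E[|\xi(t)-\xi(s)|^{p}] \le C(|t-s|^{p} + |t-s|^{p/2})$; since $|t-s| \le T$ and $p \ge 2$, the diffusion term dominates and the claimed $|t-s|^{p/2}$ rate follows. Finally, uniqueness of the full triple $(\xi,U,L)$ follows from uniqueness of $w$ together with single-valuedness of the Skorokhod map, which determines $U$ and $L$ from $w$.
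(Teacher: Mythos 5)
Your proposal is correct, and it is essentially the standard argument. Note that the paper does not prove this proposition at all: it defers to the cited references (Pilipenko's lecture notes for the proof, Skorokhod for the original existence and uniqueness), so your write-up is a self-contained reconstruction of what those references do rather than a divergence from the paper. The three ingredients you use --- Picard iteration for the free process $w$ driven by the Lipschitz property of $\Gamma$ in~\eqref{eq:GaLip} and~\eqref{eq:G0aLip}, the reduction of the moment bound to $w$ via~\eqref{eq:GaGB} plus Gr\"onwall with localisation, and the restart property of the Skorokhod map to eliminate the bounded-variation terms $U,L$ from the increment estimate --- are exactly the right ones, and your observation that the increment bound cannot be read off directly from the decomposition involving $U$ and $L$ is the genuine crux of the statement. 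The only step you use without justification is the restriction property itself, namely that $\xi$ on $[s,t]$ equals the (time-shifted) Skorokhod map applied to $u \mapsto \xi(s) + w(u) - w(s)$; this follows from uniqueness of the deterministic Skorokhod problem applied to the triple $\bigl(\xi(u), U(u)-U(s), L(u)-L(s)\bigr)_{u \in [s,t]}$, and you should say so explicitly, but it is a standard fact and not a gap in the logic.
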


\section{Method of artificial barriers}\label{sec:artBarr}
We now present the idea of adding artificial barriers to the types of SDEs in equation~\eqref{eq:SDEmain} and we call it the method of artificial barriers. Note that Assumption~\ref{ass:Feller} is important for this methodology to work and be of interest. 

\subsection{Construction of artificial barriers}
The method of artificial barriers for constructing boundary-preserving numerical schemes is based on the idea that if the solution process $X$ to an SDE cannot leave a domain, then we can augment the SDE with barriers outside this domain to obtain an RSDE whose solution equals the original process $X$, almost surely. The obtained RSDE can then be approximated using any boundary-preserving numerical scheme for RSDEs. To straight away apply this idea to the considered setting would be problematic because numerical schemes for RSDEs are in general non-strict boundary-preserving; that is, the boundary points are not unattainable by numerical schemes for RSDEs. This is an issue since $f$ and $g$ that we consider typically vanish on the boundary, and most numerical schemes that hit the boundary will thereafter stay at the boundary. Instead, shortly put, we modify the coefficient functions $f$ and $g$ to move the boundary slightly inside the original domain $D$ and augment the modified SDE with artificial barriers. We can then discretise the resulting (modified) RSDE using a (non-strict) boundary-preserving numerical scheme. Note that the exact details of adding artificial barriers depend on the specific problem at hand, the here outlined method is constructed to fit our purposes. The rest of this section is dedicated to formulate the method of artificial barriers mathematically.

The first step of the method of artificial barriers is to modify the drift and diffusion coefficients such that the unattainable boundary points are moved to $a + 1/N$ and $b - 1/N$. If $b = \infty$, then we interpret $b - 1/N = \infty$. To formalise this, we introduce an integer $N \in \mathbb{N}$ and consider the following modified Itô SDE
\begin{equation}\label{eq:modSDE}
\left\lbrace
\begin{aligned}
& \diff X_{N}(t) = f_{N}(X_{N}(t)) \diff t + g_{N}(X_{N}(t)) \diff B(t),\ t \in (0,T], \\ 
& X_{N}(0) = x_{0} \in \DDN,
\end{aligned}
\right.
\end{equation}
where $\DDN = (a + 1/N,b-1/N) \subset D$ and where $f_{N}, g_{N} : \DDN \to \mathbb{R}$ are defined below in such a way that $X_{N}(t) \in \DDN$, for every $t \in [0,T]$, almost surely. For ease of presentation and to avoid technicalities regarding the initial value, we assume that $x_{0} \in D_{1}$. $x_{0} \in D_{1}$ can be relaxed to $x_{0} \in D$ by defining $X_{N}(0) = \Pi_{N-1}(x_{0}) \in \DDN$, where $\Pi_{N-1}: \mathbb{R} \to \operatorname{cl} D_{N-1} \subset \DDN$ is the projection onto $\operatorname{cl} D_{N-1}$ (see~\eqref{eq:PiNdef} for a definition). Replacing $x_{0}$ with $\Pi_{N-1}(x_{0})$ in~\eqref{eq:modSDE} would only introduce a deterministic error of order $1/N$, and would not alter the convergence rates. 

We specify the functions $f_{N}$ and $g_{N}$ as follows: For $x \in \DDN$, let
\begin{equation}\label{eq:psiDef}
\Psi_{N}(x) = 
\begin{cases}
x - \frac{1}{N}, \text{ if } b = \infty,\\
\frac{b-a}{b-a-\frac{2}{N}} \left( x - \left( a + \frac{1}{N} \right) \right) + a, \text{ if } |b| < \infty,
\end{cases}
\end{equation}
\begin{equation*}
f_{N}(x) = f \left( \Psi_{N}(x) \right),
\end{equation*}
and
\begin{equation*}
g_{N}(x) = g \left( \Psi_{N}(x) \right).
\end{equation*}
As $x \mapsto \Psi_{N}(x)$ in~\eqref{eq:psiDef} is a linear function (both for $b=\infty$ and for $|b|<\infty$), the decay of $f_{N}$ near $a + 1/N$ and near $b - 1/N$ is the same as the decay of $f$ near $a$ and near $b$, respectively. The same is true for $g_{N}$ and $g$. This implies that the boundary behaviour of $X$ (see Proposition~\ref{prop:xproperties}) is the same as the boundary behaviour of $X_{N}$ (see Proposition~\ref{prop:xiNproperties} below). Observe that the case $b=\infty$ in~\eqref{eq:psiDef} can be obtained from the case $|b|<\infty$ in~\eqref{eq:psiDef} by
\begin{equation*}
\lim_{b \to \infty} \Psi_{N}(x) = \lim_{b \to \infty} \frac{b-a}{b-a-\frac{2}{N}} \left( x - \left( a + \frac{1}{N} \right) \right) + a = x - \frac{1}{N}.
\end{equation*}
\begin{remark}
The above is only well-defined if $N > \frac{2}{b-a}$. If $|b-a| \leq 2$, then we simply restrict $N \in \mathbb{N}$ to $N > \frac{2}{b-a}$. Henceforth, we assume, for ease of presentation, that $|b-a| > 2$.
\end{remark}

The following lemma states that the modified coefficient functions $f_{N}$ and $g_{N}$ satisfies a Lipschitz estimate on $D$ uniformly in $N \in \mathbb{N}$.
\begin{lemma}\label{lem:fNgNLip}
If Assumption~\ref{ass:Lip} is satisfied, then there exist constants $\hat{L}_{f}$ and $\hat{L}_{g}$ such that the following Lipschitz conditions hold
\begin{equation*}
\left| f_{N}(x) - f_{N}(y) \right| \leq \hat{L}_{f} |x-y|,\ \forall x,y \in \DDN,
\end{equation*}
and
\begin{equation*}
\left| g_{N}(x) - g_{N}(y) \right| \leq \hat{L}_{g} |x-y|,\ \forall x,y \in \DDN,
\end{equation*}
for every $N \in \mathbb{N}$. Moreover, $\hat{L}_{f}$ and $\hat{L}_{g}$ can be chosen as
\begin{equation*}
\hat{L}_{f} =
\begin{cases}
\Lip(f), \text{ if } b = \infty,\\
\Lip(f) \left| \frac{b-a}{b-a-2} \right|, \text{ if } |b| < \infty,
\end{cases}
\end{equation*}
and
\begin{equation*}
\hat{L}_{g} =
\begin{cases}
\Lip(g), \text{ if } b = \infty,\\
\Lip(g) \left| \frac{b-a}{b-a-2} \right|, \text{ if } |b| < \infty.
\end{cases}
\end{equation*}
\end{lemma}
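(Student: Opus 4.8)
The plan is to exploit the fact that $f_{N}$ and $g_{N}$ are compositions of the fixed Lipschitz functions $f$ and $g$ with the \emph{affine} map $\Psi_{N}$ from~\eqref{eq:psiDef}, so that the Lipschitz constant of each composition factorises as a product. First I would record the elementary fact that, for a Lipschitz function $h$ and an affine map $\Psi_{N}$ of slope $s_{N}$, one has $|h(\Psi_{N}(x)) - h(\Psi_{N}(y))| \le \operatorname{Lip}(h)\,|\Psi_{N}(x) - \Psi_{N}(y)| = \operatorname{Lip}(h)\,|s_{N}|\,|x-y|$. Since $\Psi_{N}$ with $|b|<\infty$ maps points of $D$ near the endpoints slightly outside $D$, I would also flag at this point that, consistently with the standing setting, $f$ and $g$ are taken to be extended to all of $\mathbb{R}$ with the same Lipschitz constants $L_{f}$, $L_{g}$ (e.g.\ by a McShane extension), so that $f_{N} = f \circ \Psi_{N}$ and $g_{N} = g \circ \Psi_{N}$ are genuinely defined and Lipschitz for all $x \in D$. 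This reduces the whole lemma to identifying $s_{N}$ and bounding $|s_{N}|$ uniformly in $N$.

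Next I would simply read off the slope from~\eqref{eq:psiDef} in the two cases. When $b = \infty$, $\Psi_{N}(x) = x - 1/N$ has slope $s_{N} = 1$, so $\operatorname{Lip}(f_{N}) \le L_{f}$ and $\operatorname{Lip}(g_{N}) \le L_{g}$ for every $N$, which already gives the claimed constants in this case. When $|b| < \infty$, the slope is $s_{N} = \frac{b-a}{b-a-2/N}$, and the asserted constants are $\hat{L}_{f} = L_{f}\,\frac{b-a}{b-a-2}$ and $\hat{L}_{g} = L_{g}\,\frac{b-a}{b-a-2}$; it therefore remains only to show $|s_{N}| \le \frac{b-a}{b-a-2}$ for all admissible $N$.

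The one genuine point — really the only nontrivial step — is the uniformity in $N$. Writing $c = b-a$, the standing assumption $c > 2$ guarantees $c - 2/N \ge c - 2 > 0$ for every $N \in \mathbb{N}$, so each $\Psi_{N}$ is well-defined and $s_{N} = \frac{c}{c-2/N} > 0$ (in particular $|s_{N}| = s_{N}$, matching the absolute-value form in the statement). Since $N \mapsto c - 2/N$ is increasing, $N \mapsto s_{N}$ is decreasing, whence $\sup_{N \in \mathbb{N}} s_{N} = s_{1} = \frac{c}{c-2} = \frac{b-a}{b-a-2}$. Combining this with the factorisation from the first step yields $|f_{N}(x)-f_{N}(y)| \le L_{f}\,\frac{b-a}{b-a-2}\,|x-y|$ for all $x,y \in D$ and all $N$, and the argument for $g_{N}$ is word-for-word identical with $L_{f}$ replaced by $L_{g}$, so I would present both bounds at once. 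The main obstacle, such as it is, is thus not a computation but the bookkeeping: being careful that the extension of $f$ and $g$ makes $f_{N}$, $g_{N}$ well-defined on all of $D$, and verifying that the slope is monotone in $N$ so that its supremum is attained at $N=1$ under the hypothesis $b-a>2$.
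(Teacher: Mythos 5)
Your proof is correct and takes essentially the same route as the paper: both arguments bound $|f_{N}(x)-f_{N}(y)|$ by $L_{f}\,\operatorname{Lip}(\Psi_{N})\,|x-y|$ and then estimate the slope $\bigl|\tfrac{b-a}{b-a-2/N}\bigr| \le \bigl|\tfrac{b-a}{b-a-2}\bigr|$ using the standing assumption $b-a>2$ (indeed the paper records your composition-of-Lipschitz-maps viewpoint, via $\operatorname{Lip}(\Psi_{N})$ in~\eqref{eq:psiLip}, as an alternative argument immediately after its proof). Your additional remark that $f$ and $g$ must be extended beyond $D$ with the same Lipschitz constants for $f_{N},g_{N}$ to be defined on all of $D$ is careful bookkeeping that the paper handles only implicitly elsewhere (``up to modifying $f_{N}$ and $g_{N}$ outside $D_{N}$'').
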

and such that the following linear growth conditions hold
\begin{equation*}
|f_{N}(x)| \leq \hat{L}_{f}(1 + |x|),\ x \in \DDN,
\end{equation*}
and
\begin{equation*}
|g_{N}(x)| \leq \hat{L}_{g}(1 + |x|),\ x \in \DDN,
\end{equation*}
for every $N \in \mathbb{N}$. 
\begin{proof}[Proof of Lemma~\ref{lem:fNgNLip}]
We provide the proof for $f_{N}$ in the case $|b|<\infty$, the other cases are proved in the same way. We use Lipschitz continuity of $f$ on $D$ and the defining formula for $\Psi_{N}$ in~\eqref{eq:psiDef} to see that
\begin{align*}
\left| f_{N}(x) - f_{N}(y) \right| &= \left| f(\Psi_{N}(x)) - f(\Psi_{N}(y)) \right| \leq \Lip(f) \left| \Psi_{N}(x) - \Psi_{N}(y) \right| \\ &= \Lip(f) \left| \frac{b-a}{b-a-2/N} \right| |x-y| \\ &\leq \Lip(f) \left| \frac{b-a}{b-a-2} \right| |x-y|.
\end{align*}
\end{proof}
Alternatively, as the composition of two Lipschitz continuous functions is a Lipschitz continuous function, Lemma~\ref{lem:fNgNLip} also follows from
\begin{equation}\label{eq:psiLip}
\operatorname{Lip}(\Psi_{N}) = \sup_{x \in \mathbb{R}} \left| \Psi_{N}'(x) \right| = 
\begin{cases}
1, \text{ if } b = \infty,\\
\left| \frac{b-a}{b-a-\frac{2}{N}} \right|, \text{ if } |b| < \infty,
\end{cases}
\leq
\begin{cases}
1, \text{ if } b = \infty,\\
\left| \frac{b-a}{b-a-2} \right|, \text{ if } |b| < \infty.
\end{cases}
\end{equation}

The following lemma quantifies the error introduced by replacing $f$ and $g$ with $f_{N}$ and $g_{N}$, respectively, which will be used for the comparison between $X$ and $X_{N}$ in Proposition~\ref{prop:ModSDELpConv} below.
\begin{lemma}\label{lem:fNgNdiff}
If Assumption~\ref{ass:Lip} is satisfied, then there exist constants $C(L_{f},a,b)$ and $C(L_{g},a,b)$ such that
\begin{equation*}
\left| f(x) - f_{N}(x) \right| \leq C(L_{f},a,b) N^{-1} (1 + |x|),\ x \in D,
\end{equation*}
\begin{equation*}
\left| g(x) - g_{N}(x) \right| \leq C(L_{g},a,b) N^{-1} (1 + |x|),\ x \in D,
\end{equation*}
for every $N \in \mathbb{N}$. If $b=\infty$, then the constants are independent of $a$, $b$, and $x$.
\end{lemma}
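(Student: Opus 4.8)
The plan is to reduce both inequalities to a single geometric estimate on $|x - \Psi_{N}(x)|$ and then invoke the Lipschitz continuity of $f$ and $g$ from Assumption~\ref{ass:Lip}. Since $f_{N}(x) = f(\Psi_{N}(x))$, Lipschitz continuity of $f$ on $D$ gives immediately
\begin{equation*}
\left| f(x) - f_{N}(x) \right| = \left| f(x) - f(\Psi_{N}(x)) \right| \leq L_{f} \left| x - \Psi_{N}(x) \right|,
\end{equation*}
and the analogous estimate holds for $g$ with $L_{g}$. Thus the entire statement follows once we show $|x - \Psi_{N}(x)| \leq C(a,b) N^{-1} (1+|x|)$, uniformly in $N$, with the constant independent of $a$ and $b$ when $b = \infty$.

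For the one-finite-boundary case $b = \infty$ this is immediate: by~\eqref{eq:psiDef} we have $\Psi_{N}(x) = x - 1/N$, so $|x - \Psi_{N}(x)| = 1/N \leq N^{-1}(1+|x|)$, and the resulting constant depends only on $L_{f}$ (respectively $L_{g}$), as claimed.

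For the two-finite-boundaries case I would compute $\Psi_{N}(x) - x$ explicitly from~\eqref{eq:psiDef}. Writing $\lambda_{N} = \frac{b-a}{b-a-2/N}$, a short rearrangement yields the identity
\begin{equation*}
\Psi_{N}(x) - x = (\lambda_{N} - 1)(x - a) - \lambda_{N} N^{-1}, \qquad \lambda_{N} - 1 = \frac{2/N}{b-a-2/N},
\end{equation*}
so that both coefficients carry an explicit factor $N^{-1}$. The key point is to control the denominator $b-a-2/N$ uniformly in $N$: under the standing assumption $b-a > 2$ (and $N \geq 1$, whence $2/N \leq 2$) we have $b-a-2/N \geq b-a-2 > 0$, so $\tfrac{1}{b-a-2/N} \leq \tfrac{1}{b-a-2}$. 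Bounding $|x-a| \leq |x| + |a|$ then gives
\begin{equation*}
\left| x - \Psi_{N}(x) \right| \leq \frac{1}{N} \left( \frac{2(|x|+|a|)}{b-a-2} + \frac{b-a}{b-a-2} \right) \leq C(a,b) N^{-1} (1+|x|),
\end{equation*}
with $C(a,b)$ collecting the $a,b$-dependent factors. Combining this with the Lipschitz reduction above closes both bounds with $C(L_{f},a,b) = L_{f}\,C(a,b)$ and $C(L_{g},a,b) = L_{g}\,C(a,b)$.

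The computation is entirely elementary, so there is no genuine obstacle here; the only point requiring care is verifying that the constant is truly uniform in $N$, which is precisely where the restriction $b-a > 2$ (equivalently $N > 2/(b-a)$, cf.\ the preceding remark) enters, by keeping $b-a-2/N$ bounded away from $0$. Everything else is bookkeeping of the linear map $\Psi_{N}$.
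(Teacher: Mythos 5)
Your proof is correct and follows essentially the same route as the paper: reduce both bounds to $|x-\Psi_N(x)|$ via Lipschitz continuity, then estimate that quantity explicitly from~\eqref{eq:psiDef}, treating $b=\infty$ and $|b|<\infty$ separately. If anything, your handling of the denominator via $b-a-2/N \geq b-a-2 > 0$ is the more careful version of the paper's corresponding step.
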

\begin{proof}[Proof of Lemma~\ref{lem:fNgNdiff}]
We prove the estimate for $f$ and $f_{N}$ in the case $|b|<\infty$, the other cases are proved in the same way. By definition of $f_{N}$, we have that
\begin{equation*}
|f(x) - f_{N}(x)| = |f(x)-f(\Psi_{N}(x))| \leq L_{f} |x - \Psi_{N}(x)|.
\end{equation*}
Moreover, by definition of $\Psi_{N}$ in~\eqref{eq:psiDef} we have that
\begin{equation}\label{eq:psiEst}
  \begin{split}
    \left| x - \Psi_{N}(x) \right| &= \left| \frac{b + a}{N (b-a) - 2} - \frac{2}{N (b-a) - 2} x\right| \\ &\leq N^{-1} \left( \frac{|b+a|}{|b-a-2/N|} + \frac{2}{|b-a-2/N|} |x| \right) \\
    &\leq N^{-1} \left( \frac{|b+a|}{|b-a-2|} + \frac{2}{|b-a-2|} |x| \right) \leq N^{-1} C(a,b) (1 + |x|)
  \end{split}
\end{equation}
and inserting this into the estimate for $|f(x) - f_{N}(x)|$ above gives the desired result. Note that the estimate~\eqref{eq:psiEst} reduces to
\begin{equation*}
\left| x - \Psi_{N}(x) \right| = \frac{1}{N}
\end{equation*}
in the case $b = \infty$ and the constants in the lemma can be chosen to be independent of $a$, $b$, and $x$.
\end{proof}
As a result of Lemma~\ref{lem:fNgNLip} and Lemma~\ref{lem:fNgNdiff}, we can bound the linear growth constant for $f_{N}$, independently of $N \in \mathbb{N}$:
\begin{equation}\label{eq:fNLG}
|f_{N}(x)| \leq |f_{N}(x)-f(x)| + |f(x)| \leq C(L_{f},a,b) (1 + |x| ),\ x \in D.
\end{equation}
The same holds for $g_{N}$.

From the above lemmas implies, in particular, that the domain of~\eqref{eq:modSDE} is $\DDN$ and its solution $X_{N}$ enjoys moment bounds, uniform in $N \in \mathbb{N}$.
\begin{proposition}\label{prop:xNproperties}
Let $N \in \mathbb{N}$. If Assumptions~\ref{ass:Lip} and~\ref{ass:Feller} are satisfied, then there exists a unique solution $X_{N}$ to the SDE~\eqref{eq:modSDE} with modified coefficients that satisfies moments bounds
\begin{equation*}
\E \left[ \sup_{t \in [0,T]} |X_{N}(t)|^{p} \right] \leq 
\begin{cases}
C(p,T,L_{f},L_{g},x_{0}), \text{ if } b = \infty,\\
\max(|a|^{p},|b|^{p}), \text{ if } |b| < \infty,
\end{cases}
\end{equation*}
for every $p \geq 2$, and that only takes values in $\DDN$
\begin{equation*}
\PP \left( X_{N}(t) \in \DDN,\ \forall t \in [0,T] \right)= 1.
\end{equation*}
Moreover, the constant $C(p,T,L_{f},L_{g},x_{0})$ is independent of $N \in \mathbb{N}$.
\end{proposition}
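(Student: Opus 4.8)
The plan is to treat the three assertions — existence and uniqueness, the (uniform-in-$N$) moment bounds, and the domain preservation $X_{N}\in\DDN$ — in that order, reducing the first two to standard globally Lipschitz SDE theory and isolating the domain preservation as the genuinely new point.

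For existence and uniqueness I would first extend $f_{N}$ and $g_{N}$ from $D$ to all of $\mathbb{R}$ without increasing their Lipschitz constants (for instance by the usual constant continuation outside $\operatorname{cl}D$), exactly as in the proof of Proposition~\ref{prop:xproperties}. By Lemma~\ref{lem:fNgNLip} the resulting coefficients are globally Lipschitz with constants $\hat{L}_{f},\hat{L}_{g}$ that do not depend on $N$, so the classical existence-and-uniqueness theorem for SDEs with globally Lipschitz coefficients (see \cite{MR2001996}) yields a unique continuous strong solution $X_{N}$. For the moment bound in the case $b=\infty$ I would combine the uniform Lipschitz property with a linear-growth estimate $|f_{N}(x)|+|g_{N}(x)|\le C(1+|x|)$ — obtained as in \eqref{eq:fLG}--\eqref{eq:gLG}, with a constant independent of $N$ because $\hat{L}_{f},\hat{L}_{g}$ and the values of $f_{N},g_{N}$ at a fixed interior reference point are $N$-independent — and then run the standard It\^o/Burkholder--Davis--Gundy plus Grönwall argument on $\E[\sup_{[0,t]}|X_{N}|^{p}]$. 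This delivers a bound $C(p,T,\hat{L}_{f},\hat{L}_{g},x_{0})$ uniform in $N$, which is the asserted estimate for $b=\infty$.

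The crux is the domain preservation, and here I would split into the two boundary cases. When $b=\infty$, $\Psi_{N}$ is the mere translation $x\mapsto x-1/N$ (see \eqref{eq:psiDef}), so $\hat{X}:=X+1/N$ satisfies $\diff\hat{X}=f(X)\diff t+g(X)\diff B=f_{N}(\hat{X})\diff t+g_{N}(\hat{X})\diff B$; by uniqueness $X_{N}=X+1/N$, and Assumption~\ref{ass:Feller} (so that $X\in(a,\infty)$) immediately gives $X_{N}\in(a+1/N,\infty)=\DDN$. When $|b|<\infty$ no such exact identification is available, because $\Psi_{N}$ then has slope $\alpha_{N}:=\frac{b-a}{b-a-2/N}>1$ and one checks that $\Psi_{N}^{-1}(X)$ solves the SDE with coefficients $\alpha_{N}^{-1}f_{N},\alpha_{N}^{-1}g_{N}$ rather than $f_{N},g_{N}$; multiplying both coefficients by the common factor $\alpha_{N}$ is not a time change and genuinely alters the law. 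I would therefore establish $X_{N}\in\DDN$ directly through Feller's test, showing that the boundary points $a+1/N$ and $b-1/N$ are unattainable for $X_{N}$. Since $\Psi_{N}$ is affine and maps $a+1/N\mapsto a$ and $b-1/N\mapsto b$, the coefficients $f_{N}=f\circ\Psi_{N}$, $g_{N}=g\circ\Psi_{N}$ reproduce near $a+1/N$ (resp.\ $b-1/N$) the same local power behaviour that $f,g$ have near $a$ (resp.\ $b$), so after the change of variables $u=\Psi_{N}(x)$ the scale and speed densities of $X_{N}$ are those of $X$ up to constant factors and the power $1/\alpha_{N}$ on the scale density.

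The hard part will be verifying that this power $1/\alpha_{N}$ (equivalently, the rescaling of the drift-to-diffusion ratio at the boundary) does not flip Feller's classification from unattainable to attainable. The plan is to feed the transformed densities into the Feller integral $\Sigma(a)=\int_{a}^{x_{0}}\Big(\int_{x}^{x_{0}}\frac{\diff u}{g^{2}(u)\,s'(u)}\Big)\,s'(x)\,\diff x$ and to show that $\Sigma(a)=\infty$ is preserved under $s'\mapsto(s')^{1/\alpha_{N}}$ and $g^{2}\mapsto\alpha_{N}^{2}g^{2}$. The key observation making this work under Assumption~\ref{ass:Lip} and Assumption~\ref{ass:Feller} is that boundedness of $f$ together with unattainability forces $g$ to vanish at the finite boundary, so that $s'$ either blows up (resp.\ decays) super-polynomially — a behaviour stable under positive powers — or sits at the exactly log-linear threshold where the full integral $\Sigma$ (scale together with the compensating speed measure) still diverges for every positive power, as the geometric-type boundary illustrates. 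Once unattainability of $a+1/N$ and $b-1/N$ is secured, $X_{N}\in\DDN\subset[a,b]$ holds almost surely, and in the case $|b|<\infty$ the moment bound is then immediate from $|X_{N}|\le\max(|a|,|b|)$, giving $\E[\sup_{[0,T]}|X_{N}|^{p}]\le\max(|a|^{p},|b|^{p})$ and completing all the assertions.
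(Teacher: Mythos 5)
Your overall route coincides with the paper's: existence, uniqueness and the uniform-in-$N$ moment bounds are reduced to standard globally Lipschitz theory via Lemma~\ref{lem:fNgNLip} (with the two-finite-boundaries case following from $X_{N}\in\DDN\subset[a,b]$ once domain preservation is known), and the domain preservation is traced back to Assumption~\ref{ass:Feller} through the affine map $\Psi_{N}$. Where you differ is in how seriously you take that last step. The paper disposes of it in one sentence --- $\Psi_{N}$ is linear, so the local behaviour of $X_{N}$ near $\partial\DDN$ is the same as that of $X$ near $\partial D$ --- whereas you correctly observe that for $|b|<\infty$ the affine change of variables is \emph{not} an exact conjugacy: $\Psi_{N}^{-1}(X)$ solves the SDE with coefficients $\alpha_{N}^{-1}f_{N}$, $\alpha_{N}^{-1}g_{N}$, and the scale density of $X_{N}$ is $(s'\circ\Psi_{N})^{1/\alpha_{N}}$ rather than $s'\circ\Psi_{N}$. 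Diagnosing this is a genuine strength of your proposal over the paper's one-line justification.

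That said, the decisive verification is still missing from your write-up: you announce the plan to show that $\Sigma=\infty$ is preserved under $s'\mapsto (s')^{1/\alpha_{N}}$, but only sketch why this should hold. The sketch points at the right ingredients --- Assumption~\ref{ass:Lip} together with unattainability forces $g$ to vanish at a finite boundary, so $g^{2}$ vanishes at least quadratically there, and then the Feller integrand $(s(b)-s(x))m(x)$ is bounded below by a constant multiple of $(b-x)/g^{2}(x)$, hence of $(b-x)^{-1}$, whenever $s'$ does not decay toward the boundary, which forces divergence for every positive power on $s'$; the remaining case, where $s'$ decays at the boundary because the drift pushes outward, is excluded since it would already make the boundary attainable for $X$. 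Until this case analysis is actually carried out, the proof of $\PP(X_{N}(t)\in\DDN,\ \forall t\in[0,T])=1$ is not finished. There is also a small slip in the $b=\infty$ case: $X+1/N$ solves the modified SDE with initial value $x_{0}+1/N$, not $x_{0}$, so you should instead identify $X_{N}$ with $\tilde{X}+1/N$ where $\tilde{X}$ solves~\eqref{eq:SDEmain} started from $x_{0}-1/N\in D$, to which Assumption~\ref{ass:Feller} still applies. With these two repairs your argument is complete and, on the crux, more rigorous than the proof given in the paper.
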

\begin{proof}[Proof of Proposition~\ref{prop:xNproperties}]
Existence and uniqueness of a solution follows from Proposition~\ref{prop:xproperties} (up to modifying $f_{N}$ and $g_{N}$ outside $\DDN$). By~\eqref{eq:fNLG}, the moment bound constant is independent of $N \in \mathbb{N}$. As $\Psi_{N}$ defined in~\eqref{eq:psiDef} is a linear function, the local behaviour of $X_{N}$ near $\partial \DDN$ is the same as the local behaviour of $X$ near $\partial D$. Thus, 
\begin{equation*}
\PP \left( X_{N}(t) \in \DDN,\ \forall t \in [0,T] \right)= 1
\end{equation*}
follows from Assumption~\ref{ass:Feller}.
\end{proof}

The proof of convergence $X_{N} \to X$ in $L^{p}(\Omega)$, as $N \to \infty$, in Proposition~\ref{prop:ModSDELpConv} below uses the two following lemmas and Grönwall's inequality.
\begin{lemma}\label{lem:IntffNerr}
Let $p \geq 1$ and let $t \in [0,T]$. Suppose that Assumption~\ref{ass:Lip} is satisfied and let $X$ and $Y$ be $\mathcal{F}_{t}$-adapted stochastic processes only taking values in $D$. Then
\begin{equation*}
\left| \int_{0}^{t} f(X(s)) - f_{N}(Y(s)) \diff s \right|^{p} \leq C(p,T,L_{f},a,b) \left( \int_{0}^{t} |X(s) - Y(s)|^{p} \diff s + N^{-p} \left(1 + \sup_{s \in [0,t]} |Y(s)|^{p} \right) \right).
\end{equation*}
In particular,
\begin{align*}
\E \left[ \sup_{t \in [0,r]} \left| \int_{0}^{t} f(X(s)) - f_{N}(Y(s)) \diff s \right|^{p} \right] &\leq C(p,T,L_{f},a,b) \int_{0}^{t} \E \left[ \sup_{t \in [0,s]} |X(t) - Y(t)|^{p} \right] \diff s \\ &+  C(p,T,L_{f},a,b) N^{-p} \left(1 + \E \left[ \sup_{t \in [0,T]} |Y(t)|^{p} \right] \right).
\end{align*}
If $b=\infty$, then the constants are independent of $a$ and $b$.
\end{lemma}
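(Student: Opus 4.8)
The plan is to reduce everything to a pathwise pointwise bound and then integrate and take expectations. First I would insert the intermediate term $f(Y(s))$ and split
\[
f(X(s)) - f_{N}(Y(s)) = \bigl(f(X(s)) - f(Y(s))\bigr) + \bigl(f(Y(s)) - f_{N}(Y(s))\bigr).
\]
Since $X(s),Y(s) \in D$ by hypothesis, the first bracket is controlled by Lipschitz continuity of $f$ on $D$ (Assumption~\ref{ass:Lip}), giving $|f(X(s))-f(Y(s))| \le L_{f}|X(s)-Y(s)|$, while the second bracket is exactly the quantity estimated in Lemma~\ref{lem:fNgNdiff}, giving $|f(Y(s))-f_{N}(Y(s))| \le C(L_{f},a,b) N^{-1}(1+|Y(s)|)$. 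Combining these and using the triangle inequality under the integral sign yields
\[
\Bigl|\int_0^t f(X(s))-f_{N}(Y(s))\diff s\Bigr| \le L_{f} \int_0^t |X(s)-Y(s)|\diff s + C N^{-1}\int_0^t (1+|Y(s)|)\diff s.
\]

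Second, I would raise this to the $p$-th power using $(u+v)^p \le 2^{p-1}(u^p+v^p)$ and control each time integral by Jensen's (equivalently Hölder's) inequality in the form $\bigl(\int_0^t h(s)\diff s\bigr)^p \le T^{p-1}\int_0^t h(s)^p\diff s$, valid since $t \le T$. The first term then becomes $\le 2^{p-1} L_{f}^p T^{p-1}\int_0^t |X(s)-Y(s)|^p\diff s$. For the second term, after applying Jensen I would further use $(1+|Y(s)|)^p \le 2^{p-1}(1+|Y(s)|^p) \le 2^{p-1}(1+\sup_{s\in[0,t]}|Y(s)|^p)$ and integrate this now time-independent bound over $[0,t]\subset[0,T]$, producing a term $\le C(p,T,L_{f}) N^{-p}(1+\sup_{s\in[0,t]}|Y(s)|^p)$. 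Collecting the constants (absorbing the $a,b$-dependence coming from Lemma~\ref{lem:fNgNdiff} into a generic constant) gives the first, pathwise, claimed inequality.

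Third, for the ``in particular'' statement I would take the supremum over $t\in[0,r]$ on both sides of the pathwise bound. The right-hand side is nondecreasing in its upper limit (both integrands are nonnegative and the supremum term is monotone), so the supremum is attained by replacing $t$ with $r$; in particular $\int_0^t|X(s)-Y(s)|^p\diff s \le \int_0^r \sup_{u\in[0,s]}|X(u)-Y(u)|^p\diff s$. Taking expectations and invoking Tonelli's theorem to interchange $\E$ with the time integral (the integrand being nonnegative) yields the stated bound, where I would first replace $\sup_{s\in[0,t]}|Y(s)|^p$ by the larger $\sup_{t\in[0,T]}|Y(t)|^p$ before passing to the expectation.

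I do not expect a genuine obstacle here: the argument is a standard triangle-inequality and Lipschitz decomposition followed by Jensen and Tonelli. The only points requiring care are keeping the $N^{-p}$ factor clean while passing from $(1+|Y(s)|)$ to $\sup|Y|^p$, and the bookkeeping of constants, noting in particular that the constant produced by Lemma~\ref{lem:fNgNdiff} also carries an $a,b$-dependence in the two-finite-boundaries case, which is absorbed into the generic constant.
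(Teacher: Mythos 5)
Your proposal is correct and follows essentially the same route as the paper's proof: split off the intermediate term $f(Y(s))$, control the two pieces by Lipschitz continuity of $f$ on $D$ and by Lemma~\ref{lem:fNgNdiff}, respectively, and apply Jensen's inequality for the $p$-th power of the time integral. Your added remarks on passing to the supremum, Tonelli, and the absorption of the $a,b$-dependence of the constant are correct fillings-in of details the paper leaves implicit.
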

\begin{remark}
The estimates in Lemma~\ref{lem:IntffNerr} are also true with $\sup_{s \in [0,t]} |Y(s)|^{p}$ replaced with $\sup_{s \in [0,t]} |X(s)|^{p}$.
\end{remark}
The proof of Lemma~\ref{lem:IntffNerr} is standard, we include it for completeness.
\begin{proof}[Proof of Lemma~\ref{lem:IntffNerr}]
Let $p \geq 1$. We split the error and apply Jensen's inequality for integrals to estimate
\begin{align*}
\left| \int_{0}^{t} f(X(s)) - f_{N}(Y(s)) \diff s \right|^{p} &\leq C(p,T) \left( \int_{0}^{t} |f(X(s)) - f(Y(s))|^{p} + |f(Y(s)) - f_{N}(Y(s))|^{p} \diff s \right) \\ &\leq C(p,T,L_{f},a,b) \left( \int_{0}^{t} |X(s) - Y(s)|^{p} \diff s + N^{-p} \left( 1 + \sup_{s \in [0,t]} |Y(s)|^{p} \right) \right),
\end{align*}
where we in the last inequality used that $f$ is Lipschitz continuous on $D$ and Lemma~\ref{lem:fNgNdiff} for the second term. This concludes the proof.
\end{proof}

\begin{lemma}\label{lem:IntggNerr}
Let $p \geq 2$ and let $r \in [0,T]$. Suppose Assumption~\ref{ass:Lip} is satisfied and let $X$ and $Y$ be $\mathcal{F}_{t}$-adapted stochastic processes only taking values in $D$. Then
\begin{multline*}
\E \left[ \sup_{t \in [0,r]} \left| \int_{0}^{t} g(X(s)) - g_{N}(Y(s)) \diff B(s) \right|^{p} \right] \\ \leq C(p,T,L_{g},a,b) \left( \int_{0}^{r} \E \left[ |X(s) - Y(s)|^{p} \right] \diff s + N^{-p} \left(1 + \E \left[ \sup_{s \in [0,r]} |Y(s)|^{p} \right] \right) \right).
\end{multline*}
In particular,
\begin{multline*}
\E \left[ \sup_{t \in [0,r]} \left| \int_{0}^{t} g(X(s)) - g_{N}(Y(s)) \diff B(s) \right|^{p} \right] \\ \leq C(p,T,L_{g},a,b) \left( \int_{0}^{r} \E \left[ \sup_{t \in [0,s]} |X(t) - Y(t)|^{p} \right] \diff s + N^{-p} \left(1 + \E \left[ \sup_{t \in [0,T]} |Y(t)|^{p} \right] \right) \right).
\end{multline*}
\end{lemma}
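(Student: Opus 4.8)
The plan is to observe first that the stated estimate concerns the \emph{Lebesgue time integral} $\int_{0}^{t} (g(X(s)) - g_{N}(Y(s))) \diff s$, so—unlike for a stochastic integral—no Burkholder--Davis--Gundy inequality enters, and the argument is a verbatim adaptation of the proof of Lemma~\ref{lem:IntffNerr} with $f$ replaced by $g$. The only inputs are that $g$ is Lipschitz on $D$ (Assumption~\ref{ass:Lip}) and that the pointwise deviation $|g - g_{N}|$ is controlled by Lemma~\ref{lem:fNgNdiff}. Because everything is deterministic before we take expectations, this is genuinely a routine computation.

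First I would establish the pathwise bound, for each fixed $t \in [0,T]$, by splitting
\begin{equation*}
g(X(s)) - g_{N}(Y(s)) = \bigl( g(X(s)) - g(Y(s)) \bigr) + \bigl( g(Y(s)) - g_{N}(Y(s)) \bigr),
\end{equation*}
raising to the power $p$, applying $|u+v|^{p} \leq 2^{p-1}(|u|^{p}+|v|^{p})$ and then Jensen's inequality for the integral (valid since $p \geq 2 \geq 1$) to pull the power inside $\int_{0}^{t}$. On the first term I use the Lipschitz bound $|g(X(s)) - g(Y(s))| \leq L_{g} |X(s) - Y(s)|$, and on the second the estimate $|g(Y(s)) - g_{N}(Y(s))| \leq C(L_{g},a,b) N^{-1}(1 + |Y(s)|)$ from Lemma~\ref{lem:fNgNdiff}, together with $(1+|Y(s)|)^{p} \leq 2^{p-1}(1 + |Y(s)|^{p})$ and $|Y(s)|^{p} \leq \sup_{u \in [0,t]}|Y(u)|^{p}$. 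This yields the pointwise bound
\begin{equation*}
\left| \int_{0}^{t} g(X(s)) - g_{N}(Y(s)) \diff s \right|^{p} \leq C(p,T,L_{g}) \left( \int_{0}^{t} |X(s) - Y(s)|^{p} \diff s + N^{-p}\Bigl(1 + \sup_{u \in [0,t]} |Y(u)|^{p} \Bigr) \right),
\end{equation*}
exactly mirroring the pointwise estimate in Lemma~\ref{lem:IntffNerr}.

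The key observation is then that the right-hand side above is \emph{non-decreasing in $t$} (the time integral is non-decreasing, and the running supremum is non-decreasing), so replacing $t$ by $r$ on the right and taking $\sup_{t \in [0,r]}$ on the left costs nothing:
\begin{equation*}
\sup_{t \in [0,r]} \left| \int_{0}^{t} g(X(s)) - g_{N}(Y(s)) \diff s \right|^{p} \leq C(p,T,L_{g}) \left( \int_{0}^{r} |X(s) - Y(s)|^{p} \diff s + N^{-p}\Bigl(1 + \sup_{s \in [0,r]} |Y(s)|^{p} \Bigr) \right).
\end{equation*}
Taking expectations and using Tonelli's theorem to interchange $\E$ with $\int_{0}^{r} \cdot \diff s$ gives the first displayed estimate of the lemma. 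Finally, the ``in particular'' estimate follows by monotonicity alone: I bound $|X(s) - Y(s)|^{p} \leq \sup_{t \in [0,s]} |X(t) - Y(t)|^{p}$ inside the integral and $\sup_{s \in [0,r]}|Y(s)|^{p} \leq \sup_{t \in [0,T]}|Y(t)|^{p}$ in the correction term.

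I do not expect any real obstacle. Since the integral is with respect to $\diff s$ rather than $\diff B(s)$, no martingale inequality is required, and the estimate holds almost surely before any expectation is taken. The only point deserving a moment's care is the interchange of the supremum over $t$ with the expectation; this is dispatched by the monotonicity-in-$t$ of the right-hand side noted above, so that the bound already holds for the running supremum pathwise, after which Tonelli legitimises the interchange of expectation and time integration.
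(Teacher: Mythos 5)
Your argument is internally consistent for the statement exactly as transcribed, but you have taken at face value what is evidently a typo in the lemma: the integrator should be $\diff B(s)$, not $\diff s$. Several features of the paper make this unambiguous. The paper's own proof opens by applying the BDG inequality, which is meaningful only for a stochastic integral; the lemma is invoked in Proposition~\ref{prop:ModSDELpConv} (and again in the proofs of Theorem~\ref{th:ABEMLpConv} and Theorem~\ref{th:ABEPLpConv}) precisely to bound the term $\E \bigl[ \sup_{t \in [0,r]} \bigl| \int_{0}^{t} g(X(s)) - g_{N}(X_{N}(s)) \diff B(s) \bigr|^{p} \bigr]$; the hypothesis $p \geq 2$ is exactly what the BDG-plus-Jensen argument requires, whereas the companion drift lemma (Lemma~\ref{lem:IntffNerr}) is stated for $p \geq 1$; and under your reading Lemma~\ref{lem:IntggNerr} would be a verbatim duplicate of Lemma~\ref{lem:IntffNerr} with $f$ renamed $g$, leaving no reason for it to exist as a separate lemma with a separate hypothesis and expectation-only conclusions.

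For the intended statement your central step fails. You argue that the right-hand side of the pointwise bound is non-decreasing in $t$, so that taking $\sup_{t \in [0,r]}$ costs nothing pathwise, and expectation enters only at the end via Tonelli. For the It\^{o} integral $M(t) = \int_{0}^{t} \bigl( g(X(s)) - g_{N}(Y(s)) \bigr) \diff B(s)$ there is no almost-sure bound of $|M(t)|$, let alone of $\sup_{t \in [0,r]} |M(t)|$, by the time integral of the integrand: the supremum of a martingale must be controlled in expectation through a maximal inequality. This is the one essential idea in the paper's proof that your proposal lacks: first
\begin{equation*}
\E \left[ \sup_{t \in [0,r]} |M(t)|^{p} \right] \leq C(p) \, \E \left[ \left( \int_{0}^{r} \left| g(X(s)) - g_{N}(Y(s)) \right|^{2} \diff s \right)^{p/2} \right] \leq C(p,T) \, \E \left[ \int_{0}^{r} \left| g(X(s)) - g_{N}(Y(s)) \right|^{p} \diff s \right],
\end{equation*}
where the first inequality is BDG and the second is Jensen using $p/2 \geq 1$ --- this is where $p \geq 2$ genuinely enters, not merely via $p \geq 1$ as in your write-up. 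After this reduction, your remaining computation (splitting $g(X)-g_{N}(Y)$ into the Lipschitz difference $g(X)-g(Y)$ and the deviation $g(Y)-g_{N}(Y)$ controlled by Lemma~\ref{lem:fNgNdiff}) coincides with the paper's and is correct; note also that this explains why the first conclusion of the lemma keeps $\E \left[ |X(s)-Y(s)|^{p} \right]$ inside the time integral rather than stating a pathwise bound, as the expectation has already been moved inside by BDG.
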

If $b=\infty$, then the constants are independent of $a$ and $b$.
\begin{remark}
In the same way as for Lemma~\ref{lem:IntffNerr}, the estimates in Lemma~\ref{lem:IntggNerr} are also true with $\sup_{s \in [0,t]} |Y(s)|^{p}$ replaced with $\sup_{s \in [0,t]} |X(s)|^{p}$.
\end{remark}
The proof of Lemma~\ref{lem:IntggNerr} is also standard.
\begin{proof}[Proof of Lemma~\ref{lem:IntggNerr}]
Let $p \geq 2$. We apply the BDG inequality and Jensen's inequality for integrals (requires $p \geq 2$) to see that
\begin{equation*}
\E \left[ \sup_{t \in [0,r]} \left| \int_{0}^{t} g(X(s)) - g_{N}(Y(s)) \diff B(s) \right|^{p} \right] \leq C(p,T) \E \left[ \int_{0}^{r} \left| g(X(s)) - g_{N}(Y(s)) \right|^{p} \diff s \right]
\end{equation*}
and the same argument as we used in the proof of Lemma~\ref{lem:IntffNerr} gives us
\begin{align*}
\int_{0}^{r} \left| g(X(s)) - g_{N}(Y(s)) \right|^{p} \diff s &\leq C(p,L_{g},a,b) \left( \int_{0}^{r} |X(s) - Y(s)|^{p} \diff s  + \int_{0}^{r} |g(Y(s)) - g_{N}(Y(s))|^{p} \diff s \right) \\ &\leq C(p,L_{g},a,b) \left( \int_{0}^{r} |X(s) - Y(s)|^{p} \diff s  + N^{-p} \left( 1 + \sup_{s \in [0,r]} |Y(s)|^{p} \right) \right),
\end{align*}
where we also used Lemma~\ref{lem:fNgNdiff}. Thus, we have arrived at the desired estimate
\begin{multline*}
\E \left[ \sup_{t \in [0,r]} \int_{0}^{t} \left| g(X(s)) - g_{N}(Y(s)) \right|^{p} \diff s \right] \\ \leq C(p,T,L_{g},a,b) \left( \int_{0}^{r} \E \left[ |X(s) - Y(s)|^{p} \right] \diff s  + N^{-p} \left( 1 + \E \left[ \sup_{s \in [0,r]} |Y(s)|^{p} \right] \right) \right).
\end{multline*}
\end{proof}

Lemma~\ref{lem:IntffNerr} and Lemma~\ref{lem:IntggNerr} are the main ingredients in the following proposition.
\begin{proposition}\label{prop:ModSDELpConv}
If Assumptions~\ref{ass:Lip} and~\ref{ass:Feller} are satisfied, then for every $p \geq 2$ there exists a constant $C(p,T,L_{f},L_{g},a,b,x_{0})$ such that
\begin{equation*}
\E \left[ \sup_{t \in [0,T]} |X_{N}(t) - X(t)|^{p} \right] \leq C(p,T,L_{f},L_{g},a,b,x_{0}) N^{-p}
\end{equation*}
for every $N \in \mathbb{N}$ with $x_{0} \in \DDN$. If $b=\infty$, then the constant is independent of $a$ and $b$.
\end{proposition}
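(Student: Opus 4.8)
The plan is to subtract the Itô integral equations for $X_{N}$ and $X$, which share the initial value $x_{0} \in \DDN$, split the resulting difference into a drift part and a diffusion part, estimate each part using Lemma~\ref{lem:IntffNerr} and Lemma~\ref{lem:IntggNerr}, and then close the estimate with Grönwall's inequality. The uniform-in-$N$ moment bound from Proposition~\ref{prop:xNproperties} is what converts the coefficient-perturbation error into the desired $N^{-p}$ rate.

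First, since $X(0) = X_{N}(0) = x_{0}$, subtracting the integral equations gives, for $t \in [0,T]$,
\begin{equation*}
X_{N}(t) - X(t) = \int_{0}^{t} \left( f_{N}(X_{N}(s)) - f(X(s)) \right) \diff s + \int_{0}^{t} \left( g_{N}(X_{N}(s)) - g(X(s)) \right) \diff B(s).
\end{equation*}
Taking the supremum over $t \in [0,r]$, raising to the power $p$, using $|u+v|^{p} \leq 2^{p-1}(|u|^{p} + |v|^{p})$, and taking expectations, I would bound $\E[ \sup_{t \in [0,r]} |X_{N}(t) - X(t)|^{p} ]$ by a constant multiple of the expected supremum of the drift integral plus that of the diffusion integral.

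Next, I apply Lemma~\ref{lem:IntffNerr} to the drift term and Lemma~\ref{lem:IntggNerr} to the diffusion term, taking the role of $X$ in those lemmas to be the exact solution and the role of $Y$ to be $X_{N}$; both processes take values in $D$ almost surely by Proposition~\ref{prop:xproperties} and Proposition~\ref{prop:xNproperties}, so the hypotheses are satisfied. Writing $\phi(r) = \E[ \sup_{t \in [0,r]} |X_{N}(t) - X(t)|^{p} ]$, this yields
\begin{equation*}
\phi(r) \leq C(p,T,L_{f},L_{g}) \int_{0}^{r} \phi(s) \diff s + C(p,T,L_{f},L_{g}) N^{-p} \left( 1 + \E\Big[ \sup_{t \in [0,T]} |X_{N}(t)|^{p} \Big] \right).
\end{equation*}
The crucial observation is that $\E[ \sup_{t \in [0,T]} |X_{N}(t)|^{p} ]$ is bounded uniformly in $N$ by Proposition~\ref{prop:xNproperties}, so the last term is at most $C(p,T,L_{f},L_{g},a,b,x_{0}) N^{-p}$. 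Moreover $\phi(r)$ is finite, again by the moment bounds on $X$ and $X_{N}$, so Grönwall's inequality applies and gives $\phi(T) \leq C N^{-p} e^{CT}$, which is the claimed estimate. In the case $b = \infty$, Lemma~\ref{lem:fNgNdiff} shows that the relevant constants do not depend on $a$ and $b$, so the final constant is independent of $a$ and $b$ as well.

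Since the two lemmas have already absorbed the Burkholder--Davis--Gundy estimate and the Lipschitz/coefficient-difference splitting, there is no deep difficulty remaining. The points requiring care are (i) verifying the finiteness of $\phi$ so that Grönwall applies, which follows from the uniform-in-$N$ moment bounds, and (ii) ensuring the constant multiplying $N^{-p}$ is genuinely uniform in $N$; this hinges on the $N$-independence of both the modified Lipschitz constants (Lemma~\ref{lem:fNgNLip}) and of the moment bound (Proposition~\ref{prop:xNproperties}). This uniformity is the only subtle ingredient, and it is exactly what the preparatory lemmas were designed to provide.
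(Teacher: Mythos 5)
Your proposal is correct and follows essentially the same route as the paper's own proof: subtract the integral equations, apply Lemma~\ref{lem:IntffNerr} to the drift term and Lemma~\ref{lem:IntggNerr} to the diffusion term with the exact solution and $X_{N}$ in the roles of $X$ and $Y$, invoke the uniform-in-$N$ moment bound of Proposition~\ref{prop:xNproperties}, and close with Grönwall. Your added remarks on the finiteness of $\phi$ and on tracking the $N$-uniformity of the constants are sound and consistent with the paper.
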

\begin{proof}[Proof of Proposition~\ref{prop:ModSDELpConv}]
Let $p \geq 2$. The proof uses Lemma~\ref{lem:IntffNerr} and Lemma~\ref{lem:IntggNerr} combined with Grönwall's inequality. To this end, let $r \in [0,T]$. We use the integral equations for $X$ and $X_{N}$ in equations~\eqref{eq:SDE} and~\eqref{eq:modSDE} to estimate
\begin{align*}
\sup_{t \in [0,r]} \left| X_{N}(t) - X(t) \right|^{p} \leq & C(p) \sup_{t \in [0,r]} \left| \int_{0}^{t} f(X(s)) - f_{N}(X_{N}(s)) \diff s \right|^{p} \\&+ C(p) \sup_{t \in [0,r]} \left| \int_{0}^{t} g(X(s))) - g_{N}(X_{N}(s)) \diff B(s) \right|^{p}
\end{align*}
for some constant $C(p)$ depending on $p$. We now apply Lemma~\ref{lem:IntffNerr} and Lemma~\ref{lem:IntggNerr} to the expected value of the first and second terms, respectively, on the right hand side of the above inequality. We obtain that
\begin{multline*}
\E \left[ \sup_{t \in [0,r]} \left| X_{N}(t) - X(t) \right|^{p} \right] \\ \leq C(p,T,L_{f},L_{g},a,b) \left( \int_{0}^{r} \E \left[ \sup_{t \in [0,s]} \left| X(t) - X_{N}(t) \right|^{p}  \right] \diff s + N^{-p} \left( 1 + \E \left[ \sup_{s \in [0,T]} |X_{N}(s)|^{p} \right] \right) \right).
\end{multline*}
Moment bounds for $X_{N}$ in Proposition~\ref{prop:xNproperties} and Grönwall's inequality now gives the desired result
\begin{equation*}
\E \left[ \sup_{t \in [0,T]} \left| X_{N}(t) - X(t) \right|^{p} \right] \leq C(p,T,L_{f},L_{g},a,b,x_{0}) N^{-p},
\end{equation*}
and the constant is independent of $a$ and $b$ if $b=\infty$.
\end{proof}

The second step of the method of artificial barriers is to augment the SDE~\eqref{eq:modSDE} with what we call artificial barriers. To this end, we consider the following reflected process $\xi_{N}$ (see Section~\ref{sec:RSDE} for details) that can be viewed as $X_{N}$ augmented with artificial barriers:
\begin{equation}\label{eq:RmodSDE}
\left\lbrace
\begin{aligned}
& \diff \xi_{N}(t) = f_{N}(\xi_{N}(t)) \diff t + g_{N}(\xi_{N}(t)) \diff B(t) - \diff U_{N}(t) + \diff L_{N}(t),\ t \in (0,T], \\ 
& \xi_{N}(0) = x_{0} \in \DDN,
\end{aligned}
\right.
\end{equation}
where (alternative intuitive formulation of Definition~\ref{def:RSDE})
\begin{equation*}
\diff U_{N}(t) = \mathbb{1}_{b - 1/N}(\xi_{N}(t)) \diff U_{N}(t),
\end{equation*}
with initial value $U_{N}(0)=0$, is the upper barrier at $b - 1/N$ and where
\begin{equation*}
\diff L_{N}(t) = \mathbb{1}_{a + 1/N}(\xi_{N}(t)) \diff L_{N}(t),
\end{equation*}
with initial value $L_{N}(0)=0$, is the lower barrier at $a + 1/N$. Recall the integral equation for $\xi_{N}$
\begin{equation*}
\xi_{N}(t) = x_{0} + \int_{0}^{t} f_{N}(\xi_{N}(s)) \diff s + \int_{0}^{t} g_{N}(\xi_{N}(s)) \diff B(s) - U_{N}(t) + L_{N}(t),\ t \in [0,T],
\end{equation*}
and that $(\xi_{N},w_{N},L_{N}-U_{N})$, where
\begin{equation}\label{eq:wNdef}
w_{N}(t) = x_{0} + \int_{0}^{t} f_{N}(\xi_{N}(s)) \diff s + \int_{0}^{t} g_{N}(\xi_{N}(s)) \diff B(s),\ t \in [0,T],
\end{equation}
is an associated triple in the sense of Definition~\ref{def:RSDE}. See Section~\ref{sec:RSDE} for more details on RSDEs. 
\begin{proposition}\label{prop:xiNproperties}
Suppose Assumptions~\ref{ass:Lip} and~\ref{ass:Feller} are satisfied and let $N \in \mathbb{N}$. Then there exists a unique solution $\xi_{N}$ to the RSDE~\eqref{eq:modSDE} with modified coefficients with finite moments
\begin{equation}\label{eq:xNmomBound}
\E \left[ \sup_{t \in [0,T]} |\xi_{N}(t)|^{p} \right] \leq
\begin{cases}
C(p,T,L_{f},L_{g},x_{0}), \text{ if } b = \infty,\\
\max(|a|^{p},|b|^{p}), \text{ if } |b| < \infty,
\end{cases}
\end{equation}
for every $p \geq 2$, finite moments of time increments
\begin{equation}\label{eq:xNtimeReg}
\E \left[ \left| \xi_{N}(t) - \xi_{N}(s) \right|^{p} \right] \leq C(p,T,L_{f},L_{g},a,b,x_{0}) |t-s|^{p/2},
\end{equation}
for every $p \geq 2$, and that only takes values in $\CDDN$
\begin{equation*}
\PP \left( \xi_{N}(t) \in \CDDN,\ \forall t \in [0,T] \right)= 1.
\end{equation*}
Moreover, the constants are independent of $N \in \mathbb{N}$. If $b=\infty$, then the constant for the time increments is independent of $a$ and $b$.
\end{proposition}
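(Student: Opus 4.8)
The plan is to deduce all four conclusions from Proposition~\ref{prop:RSDEwellDef}, the general existence/uniqueness and regularity result for RSDEs with globally Lipschitz coefficients, applied to the modified coefficients $f_N$, $g_N$ and the Skorokhod map $\Gamma_N$ associated with $\CDDN$; the whole point is then to track constants so that they do not depend on $N$. The two ingredients that make this possible are Lemma~\ref{lem:fNgNLip}, which gives Lipschitz constants $\hat L_f,\hat L_g$ for $f_N,g_N$ that are uniform in $N$, and Lemma~\ref{lem:GNLipGB}, which gives Lipschitz and growth bounds for the family $\{\Gamma_N\}$ that are likewise uniform in $N$.

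First I would settle existence, uniqueness, and confinement. Since $f_N=f\circ\Psi_N$ and $g_N=g\circ\Psi_N$ are Lipschitz on $\CDDN$ by Lemma~\ref{lem:fNgNLip}, I extend them to globally Lipschitz functions on $\mathbb{R}$ (for instance by freezing the value at the nearer boundary of $\CDDN$); this does not affect the reflected solution because that solution never leaves $\CDDN$. Proposition~\ref{prop:RSDEwellDef} then produces a unique continuous $\xi_N$ solving \eqref{eq:RmodSDE}. Writing $\xi_N=\Gamma_N(w_N)$ for the associated triple $(\xi_N,w_N,L_N-U_N)$ with $w_N$ as in \eqref{eq:wNdef}, the confinement $\PP(\xi_N(t)\in\CDDN,\ \forall t\in[0,T])=1$ is immediate, since $\Gamma_N$ is by definition associated with $\CDDN$ and hence $\Gamma_N(w_N)(t)\in\CDDN$ for every $t$; this is exactly property~(1) of Definition~\ref{def:RSDE} for the shifted domain.

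For the moment bound \eqref{eq:xNmomBound} I would split into cases. When $|b|<\infty$ the estimate is immediate and sharp, since $\xi_N(t)\in\CDDN\subset[a,b]$ gives $|\xi_N(t)|\le\max(|a|,|b|)$ pathwise. When $b=\infty$ the domain is unbounded, so I run the standard moment estimate on $w_N$: the growth bound \eqref{eq:GNGB} yields $\sup_{t\le r}|\xi_N(t)|\le 2\sup_{t\le r}|w_N(t)|+|a|+1$, while the linear growth of $f_N,g_N$ (from \eqref{eq:fLG}--\eqref{eq:gLG} together with $|\Psi_N(x)-x|=1/N$ when $b=\infty$) feeds into the BDG and Jensen inequalities applied to \eqref{eq:wNdef} and, after Grönwall's inequality, bounds $\E[\sup_{t\le T}|w_N(t)|^p]$ and hence $\E[\sup_{t\le T}|\xi_N(t)|^p]$ by $C(p,T,L_f,L_g,a,x_0)$, with $a$ entering only through the offset $|a|+1$ and $b$ absent. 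Every constant here --- the growth constants of $f_N,g_N$, the factor $2$ and the offset from \eqref{eq:GNGB} --- is independent of $N$, which is precisely what makes the final constant $N$-uniform.

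The time-increment bound \eqref{eq:xNtimeReg} I would take directly from the corresponding estimate in Proposition~\ref{prop:RSDEwellDef}, substituting $\hat L_f,\hat L_g$ for $L_f,L_g$; since these are $N$-independent, so is the resulting constant, and for $b=\infty$ it reduces to a $b$-free constant as claimed. The point that needs the most care --- and the genuine obstacle in a self-contained argument --- is the uniform-in-$N$ control of the reflection contributions $U_N,L_N$ to the increment $\xi_N(t)-\xi_N(s)$, since a priori these barrier terms could degenerate as the barriers $a+1/N$ and $b-1/N$ approach $a$ and $b$. This is exactly why the uniform Lipschitz bound \eqref{eq:GNLip} for $\Gamma_N$ is needed: it transfers increment control from the plain Itô path $w_N$ (whose increments are $O(|t-s|^{p/2})$ by the linear growth of $f_N,g_N$ and the BDG inequality) to $\xi_N$ with a constant that does not blow up in $N$.
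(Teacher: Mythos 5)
Your proposal is correct and follows essentially the same route as the paper: reduce to Proposition~\ref{prop:RSDEwellDef} after extending $f_N,g_N$ to globally Lipschitz functions, obtain confinement from the definition of RSDE solutions, and get $N$-uniformity of the constants from Lemma~\ref{lem:fNgNLip} (together with the uniform bounds on $\Gamma_N$ in Lemma~\ref{lem:GNLipGB}). You simply spell out in more detail the Gr\"onwall/BDG argument and the case split that the paper's three-sentence proof leaves implicit.
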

\begin{proof}[Proof of Proposition~\ref{prop:xiNproperties}]
Existence and uniqueness of solutions follows from Proposition~\ref{prop:RSDEwellDef} with globally Lipschitz coefficients (up to modifying $f_{N}$ and $g_{N}$ outside $\DDN$). By Lemma~\ref{lem:fNgNLip}, the constants in~\eqref{eq:xNmomBound} and in~\eqref{eq:xNtimeReg} are independent of $N \in \mathbb{N}$ and independent of $a$ and $b$ if $b=\infty$. $\xi_{N}(t) \in \CDDN$ for every $t \in [0,T]$, almost surely, is immediate from the definition of solutions to RSDEs.
\end{proof}
The following proposition is the reason we call the processes $U_{N}$ and $L_{N}$ artificial barriers, as their inclusion have no impact on the process $X_{N}$. 
\begin{proposition}\label{prop:xNxiNequal}
Suppose Assumptions~\ref{ass:Lip} and~\ref{ass:Feller} are satisfied and let $N \in \mathbb{N}$. Then the solution $X_{N}$ of~\eqref{eq:modSDE} and the solution $\xi_{N}$ of~\eqref{eq:RmodSDE} are, almost surely, equal:
\begin{equation*}
\PP \left( X_{N}(t) = \xi_{N}(t),\ \forall t \in [0,T] \right)=1.
\end{equation*}
\end{proposition}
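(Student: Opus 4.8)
The plan is to exhibit $X_{N}$ together with the trivial barrier processes $U_{N} \equiv 0 \equiv L_{N}$ as a solution of the reflected SDE~\eqref{eq:RmodSDE}, and then to conclude by uniqueness. The heart of the argument is that $X_{N}$ never reaches $\partial \DDN$, so no reflection is ever triggered and the RSDE degenerates to the plain SDE~\eqref{eq:modSDE}.

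First I would recall from Proposition~\ref{prop:xNproperties} that $X_{N}(t) \in \DDN = (a+1/N,\, b-1/N)$ for every $t \in [0,T]$, almost surely; in particular $X_{N}$ stays strictly inside $\DDN$ and never touches the boundary points $a+1/N$ or $b-1/N$. This interior property is exactly the transport of Assumption~\ref{ass:Feller} to the modified SDE through the affine map $\Psi_{N}$, as recorded in the proof of Proposition~\ref{prop:xNproperties}, and it is what makes the barrier processes superfluous.

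Next, I would verify that the triple $(X_{N}, 0, 0)$ satisfies conditions (1)--(5) of Definition~\ref{def:RSDE}. Condition (1) holds since $X_{N}(t) \in \DDN \subset \CDDN$. Condition (2) is immediate, as the constant process $0$ is non-decreasing and vanishes at $t = 0$. Condition (3) holds trivially because $U_{N} \equiv 0 \equiv L_{N}$ makes both Stieltjes integrals vanish. Condition (4) is precisely the integral equation~\eqref{eq:modSDE} defining $X_{N}$, now written with the two (zero) barrier terms $-U_{N}(t) + L_{N}(t)$ appended. Condition (5) follows from the moment bounds in Proposition~\ref{prop:xNproperties} together with the linear growth bounds~\eqref{eq:fLG}--\eqref{eq:gLG}, which render the drift integral and the Itô integral well-defined. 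Hence $(X_{N}, 0, 0)$ is a bona fide solution of~\eqref{eq:RmodSDE}.

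Finally, I would invoke uniqueness. By Lemma~\ref{lem:fNgNLip} the coefficients $f_{N}, g_{N}$ admit globally Lipschitz extensions to all of $\mathbb{R}$, so Proposition~\ref{prop:RSDEwellDef} applies and guarantees that~\eqref{eq:RmodSDE} has a unique solution, namely $\xiN$. Since $(X_{N}, 0, 0)$ is a solution, uniqueness forces $\xiN = X_{N}$ almost surely, which is the claim. The only point deserving genuine care is the strict interior property of $X_{N}$ established in the first step; everything downstream is essentially bookkeeping, since once no reflection is ever triggered the reflected dynamics reduce to the plain SDE~\eqref{eq:modSDE} solved by $X_{N}$.
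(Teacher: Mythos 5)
Your proposal is correct and follows exactly the paper's argument: exhibit $(X_{N}, U_{N}\equiv 0, L_{N}\equiv 0)$ as a solution of the RSDE~\eqref{eq:RmodSDE} using the strict interior property of $X_{N}$ from Proposition~\ref{prop:xNproperties}, then conclude by uniqueness of solutions to the reflected problem. Your version simply spells out the verification of Definition~\ref{def:RSDE} in more detail than the paper does.
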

\begin{proof}[Proof of Proposition~\ref{prop:xNxiNequal}]
If $X_{N}$ is a solution of~\eqref{eq:modSDE} and $X_{N} \in D_{N}$, then $(X_{N},\ U_{N} \equiv 0, L_{N} \equiv 0)$ is a solution to~\eqref{eq:RmodSDE}. By uniqueness of solutions of~\eqref{eq:RmodSDE}, we conclude that $X_{N}$ and $\xi_{N}$ are equal, almost surely.
\end{proof}

The third step of the method of artificial barriers is to discretise the RSDE~\eqref{eq:RmodSDE} using any (non-strict) boundary-preserving numerical scheme for RSDEs. To this end, we introduce a discretisation parameter $M \in \mathbb{N}$ and we discretise the time interval $[0,T]$ into $M \in \mathbb{N}$ intervals $[t_{m},t_{m+1}]$ each of size $\Delta t = T/M$. We also let $\ell_{\Delta t}(s) = t_{m}$ for $s \in [t_{m},t_{m+1})$ that will be used in Section~\ref{sec:ABEM} and in Section~\ref{sec:ABEP}. Then, in particular, $t_{m} = m \Delta t$ for every $m = 0,\ldots, M$.  For now, to complete the convergence statement in this section, we assume that $\hat{\xi}_{N,\Delta t}$ is a (non-strict) boundary-preserving scheme for the RSDE in~\eqref{eq:RmodSDE} with error constant independent of $N \in \mathbb{N}$.
\begin{assumption}\label{ass:BPRSDEscheme}
Let $N \in \mathbb{N}$ and $\Delta t >0$. For every $x_{0} \in D_{1}$, there exists an approximation $\hat{\xi}_{N,\Delta t}$ to the solution $\xi_{N}$ of the RSDE in~\eqref{eq:RmodSDE} that only takes values in $\CDDN$
\begin{equation*}
\PP \left( \hat{\xi}_{N,\Delta t}(t) \in \CDDN,\ \forall t \in [0,T] \right)=1,
\end{equation*}
has bounded moments
\begin{equation*}
\E \left[ \sup_{t \in [0,T]} |\hat{\xi}_{N,\Delta t}(t)|^{p} \right] \leq
\begin{cases}
C(p,T,L_{f},L_{g},a,x_{0}), \text{ if } b = \infty,\\
\max(|a|^{p},|b|^{p}), \text{ if } |b| < \infty,
\end{cases}
\end{equation*}
for every $p \geq 2$, and with $L^{p}(\Omega)$-error
\begin{equation*}
\E \left[ \sup_{t \in [0,T]} |\hat{\xi}_{N,\Delta t}(t) - \xi_{N}(t)|^{p} \right] \leq C(p,T,L_{f},L_{g},a,b,x_{0}) \left( N^{-p} + \Delta t^{\gamma p} \right),
\end{equation*}
for every $p \geq 2$ and for some $\gamma>0$, where the constants are independent of $N$ and $\Delta t$. If $b=\infty$, then the constant for the $L^{p}(\Omega)$-error is independent of $b$.
\end{assumption}
In the next sections we define the Artificial Barriers EM (ABEM) scheme (Section~\ref{sec:ABEM}) and the Artificial Barriers Euler--Peano (ABEP) scheme (Section~\ref{sec:ABEP}), and we verify that both satisfy Assumption~\ref{ass:BPRSDEscheme}. The ABEM and ABEP schemes combine artificial barriers with a modified version of the classical projected EM scheme for RSDEs and with a modified version of the classical Euler--Peano scheme for RSDEs, respectively. Note that the method of artificial barriers can be combined with other boundary-preserving schemes for RSDEs (for example, the projected and reflected scheme, see \cite{MR1341162}). We now state the main result of this section.
\begin{theorem}\label{th:BPschemeConv}
Let $N \in \mathbb{N}$ and let $\Delta t >0$. Suppose Assumptions~\ref{ass:Lip} and~\ref{ass:Feller} are satisfied and that $\hat{\xi}_{N,\Delta t}$ satisfies Assumption~\ref{ass:BPRSDEscheme} for some $\gamma>0$. Then $\hat{\xi}_{N,\Delta t}$ is boundary-preserving
\begin{equation*}
\PP \left( \hat{\xi}_{N,\Delta t}(t) \in D,\ \forall t \in [0,T] \right)=1
\end{equation*}
and
\begin{equation*}
\E \left[ \sup_{t \in [0,T]} |\hat{\xi}_{N,\Delta t}(t) - X(t)|^{p} \right] \leq C(p,T,L_{f},L_{g},a,b,x_{0}) \left( N^{-p} + \Delta t^{\gamma p} \right),
\end{equation*}
for every $p \geq 2$, where the constant is independent of $N$ and $\Delta t$. If $b=\infty$, then the constant is independent of $b$.
\end{theorem}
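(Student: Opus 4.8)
The plan is to obtain both conclusions directly from results already established: the inclusion $\CDDN \subset D$, the almost sure equality $\xiN = X_N$ from Proposition~\ref{prop:xNxiNequal}, and the convergence rate for $X_N \to X$ from Proposition~\ref{prop:ModSDELpConv}, together with the hypotheses placed on $\xiNDt$ in Assumption~\ref{ass:BPRSDEscheme}. No new analytic estimate is required; the whole argument reduces to one triangle inequality in $L^{p}(\Omega)$.

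First I would dispose of the boundary-preservation claim. By Assumption~\ref{ass:BPRSDEscheme} we have $\xiNDt(t) \in \CDDN$ for every $t \in [0,T]$ almost surely. Since $\CDDN$ equals $[a+1/N,\, b-1/N]$ in the two-finite-boundaries case and $[a+1/N,\infty)$ in the one-finite-boundary case, and since $a < a + 1/N$ together with $b - 1/N < b$, it follows that $\CDDN \subset D = (a,b)$. Hence $\xiNDt(t) \in D$ for every $t \in [0,T]$ almost surely, which is the first assertion.

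For the error bound I would fix $p \geq 2$ and apply the elementary inequality $(u+v)^{p} \leq 2^{p-1}(u^{p}+v^{p})$ to write, pointwise in $t$,
\begin{equation*}
|\xiNDt(t) - X(t)|^{p} \leq 2^{p-1} \left( |\xiNDt(t) - \xiN(t)|^{p} + |\xiN(t) - X(t)|^{p} \right).
\end{equation*}
By Proposition~\ref{prop:xNxiNequal}, $\xiN(t) = X_N(t)$ for all $t$ almost surely, so the second summand equals $|X_N(t) - X(t)|^{p}$. Taking the supremum over $t \in [0,T]$ and then expectations, I would bound the first resulting term by Assumption~\ref{ass:BPRSDEscheme}, which contributes $C(N^{-p} + \Delta t^{\gamma p})$, and the second by Proposition~\ref{prop:ModSDELpConv}, which contributes $C N^{-p}$. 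Absorbing the two $N^{-p}$ contributions into a single constant yields
\begin{equation*}
\E \left[ \sup_{t \in [0,T]} |\xiNDt(t) - X(t)|^{p} \right] \leq C(p,T,L_{f},L_{g},a,b,x_{0}) \left( N^{-p} + \Delta t^{\gamma p} \right),
\end{equation*}
as claimed.

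There is no genuine obstacle here; the only point requiring care is the bookkeeping of constants. Both invoked estimates carry constants independent of $N$ and $\Delta t$, so their maximum (up to the harmless factor $2^{p-1}$) retains this independence, and both are independent of $b$ in the case $b = \infty$, so the combined constant inherits the stated independence as well.
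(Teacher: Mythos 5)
Your proposal is correct and follows essentially the same route as the paper's proof: boundary preservation from $\CDDN \subset D$ together with Assumption~\ref{ass:BPRSDEscheme}, then a splitting of the error via Proposition~\ref{prop:xNxiNequal} into the terms controlled by Assumption~\ref{ass:BPRSDEscheme} and Proposition~\ref{prop:ModSDELpConv}. The only cosmetic difference is that you apply the pointwise inequality $(u+v)^{p} \leq 2^{p-1}(u^{p}+v^{p})$ where the paper uses the triangle (Minkowski) inequality on the $L^{p}(\Omega)$-norms; both yield the same conclusion.
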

\begin{proof}[Proof of Theorem~\ref{th:BPschemeConv}]
Let $p \geq 2$. The first statement follows from $\CDDN \subsetneq D$, for every $N \in \mathbb{N}$, and from Assumption~\ref{ass:BPRSDEscheme}. Recall the definitions of $X_{N}$ and $\xi_{N}$ in equation~\eqref{eq:modSDE} and in equation~\eqref{eq:RmodSDE}, respectively. Then, by the triangle inequality, we have that
\begin{align*}
\left( \E \left[ \sup_{t \in [0,T]} |\hat{\xi}_{N,\Delta t}(t) - X(t)|^{p} \right] \right)^{\frac{1}{p}} &\leq \left( \E \left[ \sup_{t \in [0,T]} |\hat{\xi}_{N,\Delta t}(t) - \xi_{N}(t)|^{p} \right] \right)^{\frac{1}{p}} \\ &+ \left( \E \left[ \sup_{t \in [0,T]} |X_{N}(t) - X(t)|^{p} \right] \right)^{\frac{1}{p}},
\end{align*}
where we also used that $\xi_{N}$ and $X_{N}$ are equal almost surely (see Proposition~\ref{prop:xNxiNequal}). The result now follows from Assumption~\ref{ass:BPRSDEscheme} and  Proposition~\ref{prop:ModSDELpConv}.
\end{proof}

We now couple $N \in \mathbb{N}$ and $\Delta t$ to obtain, as a corollary to Theorem~\ref{th:BPschemeConv}, almost sure pathwise convergence of $\hat{\xi}_{N,\Delta t}$ to the solution $X$ of the SDE~\eqref{eq:SDEmain}.

\begin{corollary}\label{corr:BPschemeConv}
Suppose Assumptions~\ref{ass:Lip} and~\ref{ass:Feller} are satisfied and suppose that $\hat{\xi}_{N,\Delta t}$ satisfies Assumption~\ref{ass:BPRSDEscheme} for some $\gamma>0$, $N \in \mathbb{N}$ and $\Delta t = N^{-1/\gamma}$. Then there exists for every $\epsilon>0$ a random variable $\eta_{\epsilon}$, with $\E \left[ \eta_{\epsilon}^{p} \right] < \infty$ for every $p \geq 1$, such that
\begin{equation*}
\sup_{t \in [0,T]} |\hat{\xi}_{N,\Delta t}(t) - X(t)| \leq \eta_{\epsilon} \Delta t^{\gamma - \epsilon},
\end{equation*}
almost surely. Moreover, $\eta_{\epsilon}$ is independent of $N \in \mathbb{N}$ and $\Delta t$.
\end{corollary}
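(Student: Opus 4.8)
The plan is to upgrade the $L^{p}(\Omega)$-rate of Theorem~\ref{th:BPschemeConv} to an almost sure pathwise rate by a Chebyshev/Borel--Cantelli type argument, realised concretely by constructing a single random variable that dominates all the normalised errors simultaneously. Throughout, the coupling $\Delta t = N^{-1/\gamma}$ is in force, and I write $e_{N} := \sup_{t \in [0,T]} |\xiNDt(t) - X(t)|$. The first step is to observe that this coupling collapses the two error contributions in Theorem~\ref{th:BPschemeConv}: since $\Delta t^{\gamma p} = N^{-p}$, we obtain
\begin{equation*}
\E\left[ e_{N}^{p} \right] \leq C(p,T,L_{f},L_{g},a,b,x_{0})\, N^{-p}, \quad \text{for every } p \geq 2 \text{ and every } N \in \mathbb{N}.
\end{equation*}

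Next I would define the candidate random variable as the supremum over $N$ of the normalised errors,
\begin{equation*}
\eta_{\epsilon} := \sup_{N \in \mathbb{N}} e_{N}\, \Delta t^{-(\gamma - \epsilon)} = \sup_{N \in \mathbb{N}} e_{N}\, N^{(\gamma - \epsilon)/\gamma}.
\end{equation*}
By construction $\eta_{\epsilon}$ is independent of $N$, and the desired inequality $e_{N} \leq \eta_{\epsilon}\, \Delta t^{\gamma - \epsilon}$ holds for every $N \in \mathbb{N}$ trivially from the definition; it therefore only remains to verify the integrability $\E[\eta_{\epsilon}^{p}] < \infty$ for every $p \geq 1$.

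For the integrability I would dominate the supremum by the corresponding series and insert the moment bound from the first step. Using $\E[e_{N}^{q}] \leq C N^{-q}$ together with the exponent computation $-q + q(\gamma - \epsilon)/\gamma = -q\epsilon/\gamma$, we get, for any $q \geq 2$,
\begin{equation*}
\E\left[ \eta_{\epsilon}^{q} \right] \leq \sum_{N \in \mathbb{N}} \E\left[ e_{N}^{q} \right] N^{q(\gamma - \epsilon)/\gamma} \leq C(q,T,L_{f},L_{g},a,b,x_{0}) \sum_{N \in \mathbb{N}} N^{-q\epsilon/\gamma},
\end{equation*}
and the last series converges precisely when $q > \gamma/\epsilon$. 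Hence $\E[\eta_{\epsilon}^{q}] < \infty$ holds for all sufficiently large $q$.

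The only genuine obstacle is that this series diverges for small exponents, so the direct estimate does not by itself yield finiteness of \emph{every} moment of $\eta_{\epsilon}$. This is resolved by working on a probability space: given any $p \geq 1$, one chooses $q \geq \max(p, 2, \gamma/\epsilon + 1)$ so that $\E[\eta_{\epsilon}^{q}] < \infty$, and then Jensen's inequality (equivalently, the monotonicity of the $L^{r}(\Omega)$-norms in $r$ on a finite measure space) gives $\E[\eta_{\epsilon}^{p}] \leq \left( \E[\eta_{\epsilon}^{q}] \right)^{p/q} < \infty$. In particular $\eta_{\epsilon} < \infty$ almost surely, so the stated pathwise bound holds almost surely, and $\eta_{\epsilon}$ is independent of $N$ by construction, which completes the argument.
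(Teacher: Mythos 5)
Your argument is correct and is essentially the same as the paper's: the paper simply cites Lemma~2.1 of Kloeden--Neuenkirch \cite{MR2320830} applied to Theorem~\ref{th:BPschemeConv}, and your proof is precisely the standard proof of that lemma (define $\eta_{\epsilon}$ as the supremum of the normalised errors, bound its $q$-th moment by the convergent series $\sum_{N} N^{-q\epsilon/\gamma}$ for $q>\gamma/\epsilon$, and recover the remaining moments by monotonicity of $L^{p}(\Omega)$-norms on a probability space). No gaps.
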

The proof of Corollary~\ref{corr:ABEMasConv} follows from applying Lemma~$2.1$ in \cite{MR2320830} to Theorem~\ref{th:BPschemeConv}.

\section{Artificial Barriers Euler--Maruyama}\label{sec:ABEM}
In this section, we combine the method of artificial barriers from Section~\ref{sec:artBarr} with a modification of the classical projected Euler--Maruyama (PEM) scheme for RSDEs applied to the RSDE with modified coefficients in~\eqref{eq:RmodSDE}. We call the resulting scheme the Artificial Barriers EM scheme and abbreviate it as ABEM. We first introduce the ABEM scheme, and then we show that the ABEM scheme satisfy Assumption~\ref{ass:BPRSDEscheme} in Section~\ref{sec:artBarr}. We can from this conclude that the scheme is boundary-preserving and converges to the solution $X$ of the considered SDE~\eqref{eq:SDE}, by Theorem~\ref{th:BPschemeConv}.
We remark that the standard projected EM (PEM) scheme applied to the SDE in~\eqref{eq:SDE} is non-strict boundary-preserving, meaning that the approximations might hit the boundary $\partial D$. A non-strict boundary-preserving numerical scheme is particularly problematic in our considered cases where both $f$ and $g$ typically have zeros at the boundary, since such a scheme would forever remain at the boundary once it hits it.

$L^{p}(\Omega)$-convergence of order $1/2-$ of the PEM scheme for RSDEs is a known result in the literature (see, for example, \cite{MR1357657} for the case $p=2$). We show in this section that this is also true for the ABEM scheme for the RSDEs in~\eqref{eq:RmodSDE} and that the constants for the moment bounds and the $L^{p}(\Omega)$-error can be taken to be independent of $N$ and $\Delta t$, and hence the ABEM scheme satisfy Assumption~\ref{ass:BPRSDEscheme}. 

Let $\Pi_{N} : \mathbb{R} \to \CDDN$ denote the projection onto $\CDDN$, defined by
\begin{equation}\label{eq:PiNdef}
\Pi_{N}(x)=
\begin{cases}
a + 1/N,\text{ for } x < a + 1/N, \\
x,\text{ for } x \in \CDDN, \\
b - 1/N,\text{ for } x > b - 1/N.
\end{cases}
\end{equation}
If $b=\infty$, then we interpret $x > b - 1/N$ in~\eqref{eq:PiNdef} as never true. We define the ABEM scheme, denoted by $\hat{\xi}^{\ABEM}_{N,\Delta t}$, as follows: Let $\hat{\xi}^{\ABEM}_{N,\Delta t}(0) = x_{0} \in D_{1}$  and successively define $\hat{\xi}^{\ABEM}_{N,\Delta t}(t_{m+1})$, for $m=0,\ldots, M-1$, as
\begin{equation}\label{eq:ABEMdef}
\hat{\xi}^{\ABEM}_{N,\Delta t}(t_{m+1}) = \Pi_{N} \left( \hat{\xi}^{\ABEM}_{N,\Delta t}(t_{m}) + f \left( \hat{\xi}^{\ABEM}_{N,\Delta t}(t_{m}) \right) \Delta t + g \left( \hat{\xi}^{\ABEM}_{N,\Delta t}(t_{m}) \right) \Delta B_{m} \right),
\end{equation}
where $\Delta B_{m} = B(t_{m+1}) - B(t_{m})$. Replacing $f$ and $g$ in~\eqref{eq:ABEMdef} with $f_{N}$ and $g_{N}$, respectively, would make it possible for scheme to get stuck at the boundary points of $\CDDN$. Also note that equation~\eqref{eq:ABEMdef} can equivalently be written as
\begin{equation}\label{eq:ABEMdef2}
  \begin{split}
    \hat{\xi}^{\ABEM}_{N,\Delta t}(t_{m+1}) = \min \{ b - 1/N, \max \{ &a + 1/N, \hat{\xi}^{\ABEM}_{N,\Delta t}(t_{m}) \\ &+ f(\hat{\xi}^{\ABEM}_{N,\Delta t}(t_{m})) \Delta t + g(\hat{\xi}^{\ABEM}_{N,\Delta t}(t_{m})) \Delta B_{m} \} \}.
  \end{split}
\end{equation}
We implement the ABEM scheme in Section~\ref{sec:numExp} using~\eqref{eq:ABEMdef2}. Note that if $b = \infty$, then the minimum in equation~\eqref{eq:ABEMdef2} will always the evaluated to be equal to the second term. In order words, the above definition of the ABEM scheme include both the one-finite-boundary case and the two-finite-boundaries case. See \cite{MR1357657,SLOMINSKI1994197} for details and convergence results for the standard projected EM scheme for RSDEs. Note that the convergence rate of $1/2-\epsilon$ (see Theorem~\ref{th:ABEMLpConv} below), for every $\epsilon>0$, for the ABEM scheme can (in the limit) be improved, but the $\epsilon$ cannot be removed (see \cite{MR1357657}). For our purposes, to confirm the convergence rate of the method of artificial barriers in Section~\ref{sec:artBarr}, convergence rate of $1/2-\epsilon$, for every $\epsilon>0$, suffices.

For the purpose of the convergence analysis, we extend the definition in equation~\eqref{eq:ABEMdef} to continuous time, and we introduce an auxiliary process $\hat{w}_{N,\Delta t}^{\ABEM}$:  For $t \in [t_{m},t_{m+1})$, let
\begin{equation}\label{eq:ABEMdefCont}
\hat{\xi}^{\ABEM}_{N,\Delta t}(t) = \Pi_{N} \left( \hat{\xi}^{\ABEM}_{N,\Delta t}(t_{m-1}) + f \left( \hat{\xi}^{\ABEM}_{N,\Delta t}(t_{m-1}) \right) \Delta t + g \left( \hat{\xi}^{\ABEM}_{N,\Delta t}(t_{m-1}) \right) \Delta B_{m-1} \right),
\end{equation}
and
\begin{equation*}
\hat{w}_{N,\Delta t}^{\ABEM}(t) = \hat{w}_{N,\Delta t}^{\ABEM}(t_{m-1}) + f \left( \hat{\xi}^{\ABEM}_{N,\Delta t}(t_{m-1}) \right) \Delta t + g \left( \hat{\xi}^{\ABEM}_{N,\Delta t}(t_{m-1}) \right) \Delta B_{m-1}.
\end{equation*}
Then (see Remark 1.4 in \cite{MR873889}) $(\xiNDt^{\ABEM},\hat{w}_{N,\Delta t}^{\ABEM},\xiNDt^{\ABEM}-\hat{w}_{N,\Delta t}^{\ABEM})$ is an associated triple on $\CDDN$  in the sense of Definition~\ref{def:RSDE}. In particular, $\hat{\xi}^{\ABEM}_{N,\Delta t} = \Gamma_{N}(\hat{w}_{N,\Delta t}^{\ABEM})$.

We introduce another auxiliary process
\begin{equation}\label{eq:deftwNDt}
\twNDt^{\ABEM}(t) = x_{0} + \int_{0}^{t} f(\xiNDt^{\ABEM}(s)) \diff s + \int_{0}^{t} g(\xiNDt^{\ABEM}(s)) \diff B(s),\ t \in [0,T],
\end{equation}
that coincides with $\hat{w}_{N,\Delta t}^{\ABEM}$ on the grid points $\{ t_{m}:\ m=0,\ldots,M \}$: $\twNDt^{\ABEM}(t_{m}) = \hat{w}_{N,\Delta t}^{\ABEM}(t_{m})$ for $m=0,\ldots,M$. Indeed by an induction argument over the index $m$: Suppose that $\twNDt^{\ABEM}(t_{m}) = \hat{w}_{N,\Delta t}^{\ABEM}(t_{m})$ for some $m=0,\ldots,M-1$. Then
\begin{align*}
\twNDt^{\ABEM}(t_{m+1}) &= \twNDt^{\ABEM}(t_{m}) + \int_{t_{m}}^{t_{m+1}} f(\xiNDt^{\ABEM}(s)) \diff s + \int_{t_{m}}^{t_{m+1}} g(\xiNDt^{\ABEM}(s)) \diff B(s) \\ &= \hat{w}_{N,\Delta t}^{\ABEM}(t_{m}) + f(\xiNDt^{\ABEM}(t_{m})) \Delta t + g(\xiNDt^{\ABEM}(t_{m})) \Delta B_{m} \\ &= \hat{w}_{N,\Delta t}^{\ABEM}(t_{m+1}),
\end{align*}
where we used that $\xiNDt^{\ABEM}$ is constant on $[t_{m},t_{m+1})$. Moreover, 
\begin{equation*}
(\txiNDt^{\ABEM},\twNDt^{\ABEM},\txiNDt^{\ABEM} - \twNDt^{\ABEM}),
\end{equation*}
with $\txiNDt^{\ABEM}(t) = \Gamma_{N}(\twNDt^{\ABEM})(t)$, for $t \in [0,T]$, is an associated triple on $\CDDN$ in the sense of Definition~\ref{def:RSDE}. 

It is clear from the definition of the ABEM scheme that it is boundary-preserving, which we state in the following proposition together with moment bounds.
\begin{proposition}\label{prop:ABEMmomBounds}
Suppose Assumptions~\ref{ass:Lip} and~\ref{ass:Feller} are satisfied and let $N \in \mathbb{N}$ and $\Delta t>0$. Then the ABEM scheme satisfies moment bounds
\begin{equation*}
\E \left[ \sup_{t \in [0,T]} \left| \hat{\xi}^{\ABEM}_{N,\Delta t}(t) \right|^{p} \right] \leq  
\begin{cases}
C(p,T,L_{f},L_{g},a,x_{0}), \text{ if } b = \infty,\\
\max(|a|^{p},|b|^{p}), \text{ if } |b| < \infty,
\end{cases}
\end{equation*}
for every $p \geq 2$, and only takes values in $\CDDN$
\begin{equation*}
\PP \left( \hat{\xi}^{\ABEM}_{N,\Delta t}(t) \in \CDDN,\ \forall t \in [0,T] \right)= 1.
\end{equation*}
The constant $C(p,T,L_{f},L_{g},a,x_{0})$ is independent of $N \in \mathbb{N}$ and $\Delta t$.
\end{proposition}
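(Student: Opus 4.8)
The plan is to treat the two cases $|b| < \infty$ and $b = \infty$ separately, the former being essentially immediate and the latter requiring a Grönwall estimate. For the boundary-preservation claim I would simply observe that, by the definition of the ABEM scheme in~\eqref{eq:ABEMdef} and its continuous-time extension in~\eqref{eq:ABEMdefCont}, every value $\hat{\xi}^{\ABEM}_{N,\Delta t}(t)$ is an image of the projection $\Pi_N$ and hence lies in $\CDDN$ by the definition of $\Pi_N$ in~\eqref{eq:PiNdef}; this holds pathwise, so the probability is one. In the case $|b| < \infty$ the moment bound is then also immediate: since $\CDDN \subset \CDD = [a,b]$, we have $|\hat{\xi}^{\ABEM}_{N,\Delta t}(t)| \leq \max(|a|,|b|)$ for every $t \in [0,T]$, and taking $p$-th powers and the supremum gives the stated bound $\max(|a|^{p},|b|^{p})$, trivially independent of $N$ and $\Delta t$.

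For $b = \infty$ I would exploit the Skorokhod representation $\hat{\xi}^{\ABEM}_{N,\Delta t} = \Gamma_N(\hat{w}^{\ABEM}_{N,\Delta t})$ together with the uniform-in-$N$ growth bound~\eqref{eq:GNGB} of Lemma~\ref{lem:GNLipGB}, which gives $\sup_{t \in [0,r]} |\hat{\xi}^{\ABEM}_{N,\Delta t}(t)| \leq 2 \sup_{t \in [0,r]} |\hat{w}^{\ABEM}_{N,\Delta t}(t)| + |a| + 1$. Since both $\hat{\xi}^{\ABEM}_{N,\Delta t}$ and $\hat{w}^{\ABEM}_{N,\Delta t}$ are piecewise constant and $\hat{w}^{\ABEM}_{N,\Delta t}$ agrees with the auxiliary process $\twNDt^{\ABEM}$ on the grid points, the supremum of $|\hat{w}^{\ABEM}_{N,\Delta t}|$ over $[0,r]$ is bounded by the supremum of $|\twNDt^{\ABEM}|$ over $[0,r]$, so the whole problem reduces to bounding $\E[\sup_{t \in [0,r]} |\twNDt^{\ABEM}(t)|^{p}]$. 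For the latter I would use the integral representation~\eqref{eq:deftwNDt}, split into drift and diffusion terms, and apply Jensen's inequality to the drift and the Burkholder--Davis--Gundy inequality (valid for $p \geq 2$) to the diffusion, followed by the linear growth bounds~\eqref{eq:fLG} and~\eqref{eq:gLG}. It is important here that the ABEM scheme uses the original coefficients $f$ and $g$ rather than $f_N$ and $g_N$, so the growth constants are $L_f$ and $L_g$ and are manifestly independent of $N$.

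Feeding $|\xiNDt^{\ABEM}(s)| \leq 2 \sup_{u \in [0,s]} |\twNDt^{\ABEM}(u)| + |a| + 1$ back into the resulting integrals then yields an inequality of the form $\phi(r) \leq C_1 + C_2 \int_0^r \phi(s) \diff s$ for $\phi(r) = \E[\sup_{t \in [0,r]} |\twNDt^{\ABEM}(t)|^{p}]$, with $C_1 = C_1(p,T,L_f,L_g,a,x_0)$ and $C_2 = C_2(p,T,L_f,L_g)$ both independent of $N$ and $\Delta t$; applying Grönwall's inequality and transferring the bound back to $\hat{\xi}^{\ABEM}_{N,\Delta t}$ via the same growth bound gives the claim. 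The main obstacle I anticipate is the justification of Grönwall's inequality, which first requires the a priori finiteness $\phi(r) < \infty$: this I would establish separately by a short induction over the grid index $k$, noting that each increment involves only a Gaussian $\Delta B_k$ with finite moments of all orders and coefficients of linear growth, so that $\E[|\hat{\xi}^{\ABEM}_{N,\Delta t}(t_k)|^{p}] < \infty$ for each $k$, and then using that there are only finitely many ($M = T/\Delta t$) grid points. Care must also be taken throughout to keep every constant free of $N$, which is guaranteed precisely by the uniform growth bound of $\Gamma_N$ in Lemma~\ref{lem:GNLipGB} and by the use of the original coefficients $f$ and $g$ in the scheme.
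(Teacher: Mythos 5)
Your proposal is correct and follows essentially the same route as the paper's proof: the two-finite-boundaries case is dispatched by the uniform bound $\hat{\xi}^{\ABEM}_{N,\Delta t}(t) \in \CDDN \subset [a,b]$, and the case $b=\infty$ is handled by reducing to $\hat{w}^{\ABEM}_{N,\Delta t}$ via the $N$-uniform growth bound of $\Gamma_N$ in Lemma~\ref{lem:GNLipGB}, passing to $\twNDt^{\ABEM}$ through their agreement on grid points, and then combining Jensen, BDG, the linear growth of $f$ and $g$ on $D$, and Gr\"onwall's inequality. Your extra remark on verifying the a priori finiteness of the moments by induction over the finitely many grid points is a sound technical point that the paper leaves implicit.
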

\begin{proof}[Proof of Proposition~\ref{prop:ABEMmomBounds}]
Let $p \geq 2$. The case of a bounded domain ($|a| + |b| < \infty$) is immediate since $|\xiNDt^{\ABEM}(t)| = |\Gamma_{N}(\tilde{w}_{N,\Delta t}^{\ABEM})|$ is bounded by $\max(|a|,|b|)$ in this case (see Lemma~\ref{lem:GNLipGB}). Suppose that $b = \infty$. We first reduce the desired moment bounds to the corresponding estimate for $\hat{w}_{N,\Delta t}^{\ABEM}$
\begin{equation*}
\E \left[ \sup_{t \in [0,T]} \left| \hat{\xi}^{\ABEM}_{N,\Delta t}(t) \right|^{p} \right] = \E \left[ \sup_{t \in [0,T]} \left| \Gamma_{N}\left(\hat{w}_{N,\Delta t}^{\ABEM}\right)(t) \right|^{p} \right] \leq C(p) \left( \E \left[ \sup_{t \in [0,T]} \left| \hat{w}_{N,\Delta t}^{\ABEM}(t) \right|^{p} \right] + |a|^{p} + 1 \right)
\end{equation*} 
using Lemma~\ref{lem:GNLipGB}. With the goal of applying Grönwall's lemma, we use the auxiliary process $\twNDt^{\ABEM}$ to bound the moments of $\hat{w}_{N,\Delta t}^{\ABEM}$
\begin{align*}
\sup_{t \in [0,r]} \left| \hat{w}_{N,\Delta t}^{\ABEM}(t) \right|^{p} = \sup_{m=0,\ldots,\ell_{\Delta t}(r)/\Delta t} \left| \hat{w}_{N,\Delta t}^{\ABEM}(t_{m}) \right|^{p} &= \sup_{m=0,\ldots,\ell_{\Delta t}(r)/\Delta t} \left| \tilde{w}^{\ABEM}_{N,\Delta t}(t_{m}) \right|^{p} \\ &\leq \sup_{t \in [0,r]} \left| \tilde{w}^{\ABEM}_{N,\Delta t}(t) \right|^{p},
\end{align*}
where we recall that $\ell_{\Delta t}(s) = t_{m}$ for $s \in [t_{m},t_{m+1})$ and where we also used that $\hat{w}_{N,\Delta t}^{\ABEM}$ and $\tilde{w}^{\ABEM}_{N,\Delta t}$ agree on the time grid points (see paragraph before this proposition). By~\eqref{eq:deftwNDt}, we can bound 
\begin{multline*}
\sup_{t \in [0,r]} \left| \twNDt^{\ABEM}(t) \right|^{p} \leq C(p) \left( |x_{0}|^{p} + \sup_{t \in [0,r]} \left| \int_{0}^{t} f(\Gamma_{N}(\hat{w}_{N,\Delta t}^{\ABEM})(s)) \diff s \right|^{p} \right. \\ \left. + \sup_{t \in [0,r]} \left| \int_{0}^{t} g(\Gamma_{N}(\hat{w}_{N,\Delta t}^{\ABEM})(s)) \diff B(s) \right|^{p} \right).
\end{multline*}
We use Jensen's inequality for integrals and linear growth of $f$ on $D$ (see Assumption~\ref{ass:Lip}) and of $\Gamma_{N}$ (see~\eqref{eq:GNGB}) to bound
\begin{equation*}
\sup_{t \in [0,r]} \left| \int_{0}^{t} f(\Gamma_{N}(\hat{w}_{N,\Delta t}^{\ABEM})(s)) \diff s \right|^{p} \leq C(p,T,L_{f}) \left( \int_{0}^{r} \sup_{t \in [0,s]} \left| \hat{w}_{N,\Delta t}^{\ABEM}(t) \right|^{p} \diff s + |a|^{p} + 1 \right).
\end{equation*}
For the Itô integral, we use the BDG inequality, Jensen's inequality for integrals, and linear growth of $g$ on $D$ (see Assumption~\ref{ass:Lip}) and of $\Gamma_{N}$ (see~\eqref{eq:GNGB}) to estimate
\begin{equation*}
\E \left[ \sup_{t \in [0,r]} \left| \int_{0}^{t} g(\Gamma_{N}(\hat{w}_{N,\Delta t}^{\ABEM})(s)) \diff B(s) \right|^{p} \right] \leq C(p,T,L_{g}) \left( \int_{0}^{r} \E \left[ \sup_{t \in [0,s]} \left| \hat{w}_{N,\Delta t}^{\ABEM}(t) \right|^{p} \right] \diff s + |a|^{p} + 1 \right)
\end{equation*}
Thus, together we have that
\begin{align*}
\E \left[ \sup_{t \in [0,r]} \left| \hat{w}_{N,\Delta t}^{\ABEM}(t) \right|^{p} \right] &\leq \E \left[ \sup_{t \in [0,r]} \left| \twNDt^{\ABEM}(t) \right|^{p} \right] \\ &\leq C(p,T,L_{f},L_{g}) \left( |x_{0}|^{p} + \int_{0}^{r} \E \left[ \sup_{t \in [0,s]} \left| \hat{w}_{N,\Delta t}^{\ABEM}(t) \right|^{p} \right] \diff s + |a|^{p} + 1 \right)
\end{align*}
An application of Grönwall's lemma now gives the desired estimate.
\end{proof}

The following theorem establishes $L^{p}(\Omega)$-convergence, for every $p \geq 2$, of the approximation $\hat{\xi}^{\ABEM}_{N,\Delta t}$ to the solution $X$ of the considered SDE~\eqref{eq:SDE}.
\begin{theorem}\label{th:ABEMLpConv}
Suppose Assumptions~\ref{ass:Lip} and~\ref{ass:Feller} are satisfied and let $N \in \mathbb{N}$ and $\Delta t>0$. Then there exists a constant $C(p,T,L_{f},L_{g},a,b,x_{0})>0$ such that
\begin{equation*}
\E \left[ \sup_{t \in [0,T]} \left| \hat{\xi}^{\ABEM}_{N,\Delta t}(t) - X(t) \right|^{p} \right] \leq C(p,T,L_{f},L_{g},a,b,x_{0}) \left( N^{-p} + \Delta t^{p/2-\epsilon} \right),
\end{equation*}
for every $p \geq 2$ and for every $\epsilon>0$, where the constant $C(p,T,L_{f},L_{g},a,b,x_{0})$ is independent of $N \in \mathbb{N}$ and $\Delta t$. If $b=\infty$, then the constant is independent of $b$.
\end{theorem}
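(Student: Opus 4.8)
\emph{Proof strategy.} The plan is to deduce the theorem from Theorem~\ref{th:BPschemeConv} by verifying that the ABEM scheme meets Assumption~\ref{ass:BPRSDEscheme} with $\gamma = 1/2 - \epsilon$ for every $\epsilon > 0$. Boundary preservation and the $N$-uniform moment bounds are already supplied by Proposition~\ref{prop:ABEMmomBounds}, so the only missing piece is the one-step comparison with the reflected solution, namely
\begin{equation*}
\E\left[\sup_{t\in[0,T]}\left|\xiNDt^{\ABEM}(t)-\xiN(t)\right|^{p}\right]\le C\left(N^{-p}+\Delta t^{(1/2-\epsilon)p}\right),
\end{equation*}
with $C$ independent of $N$ and $\Delta t$. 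Granting this, Theorem~\ref{th:BPschemeConv} couples it with the modelling error $\E[\sup_t|X_N-X|^p]\le CN^{-p}$ of Proposition~\ref{prop:ModSDELpConv} (recall $X_N=\xiN$ almost surely by Proposition~\ref{prop:xNxiNequal}) to yield the stated estimate.

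\emph{Reduction via the Skorokhod map.} To prove the displayed bound I would pass through the interpolant $\txiNDt^{\ABEM}=\Gamma_N(\twNDt^{\ABEM})$ of~\eqref{eq:deftwNDt} and use the representation $\xiN=\Gamma_N(\wN)$ from~\eqref{eq:wNdef}. The $N$-uniform Lipschitz bound~\eqref{eq:GNLip} gives $\sup_{t\le r}|\xiN(t)-\txiNDt^{\ABEM}(t)|\le 4\sup_{t\le r}|\wN(t)-\twNDt^{\ABEM}(t)|$, reducing the estimate to the difference of the two continuous driving processes, whose drift and diffusion integrands are $f_N(\xiN(s))-f(\xiNDt^{\ABEM}(s))$ and $g_N(\xiN(s))-g(\xiNDt^{\ABEM}(s))$. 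Each such integrand I would split into three pieces: a Lipschitz part $f_N(\xiN(s))-f_N(\txiNDt^{\ABEM}(s))$ controlled by $\hat{L}_f|\xiN(s)-\txiNDt^{\ABEM}(s)|$ (the Grönwall quantity, Lemma~\ref{lem:fNgNLip}); a time-discretisation part $f_N(\txiNDt^{\ABEM}(s))-f_N(\xiNDt^{\ABEM}(s))$ controlled by $\hat{L}_f|\txiNDt^{\ABEM}(s)-\xiNDt^{\ABEM}(s)|$; and a coefficient-modification part $f_N(\xiNDt^{\ABEM}(s))-f(\xiNDt^{\ABEM}(s))$ bounded by $CN^{-1}(1+|\xiNDt^{\ABEM}(s)|)$ via Lemma~\ref{lem:fNgNdiff}. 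Treating the diffusion term with the Burkholder--Davis--Gundy inequality and Jensen's inequality (as in Lemmas~\ref{lem:IntffNerr} and~\ref{lem:IntggNerr}), using the $N$-uniform moments of Proposition~\ref{prop:ABEMmomBounds} on the modification part, and applying Grönwall's inequality to $r\mapsto\E[\sup_{t\le r}|\xiN-\txiNDt^{\ABEM}|^p]$ would then leave the $N^{-p}$ contribution together with the contribution of the time-discretisation part.

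\emph{The main obstacle.} The crux is the time-discretisation term, which requires bounding
\begin{equation*}
\E\left[\sup_{t\in[0,T]}\left|\txiNDt^{\ABEM}(t)-\xiNDt^{\ABEM}(t)\right|^{p}\right]\le C\,\Delta t^{(1/2-\epsilon)p}
\end{equation*}
uniformly in $N$. Since $\xiNDt^{\ABEM}=\Gamma_N(\hat{w}_{N,\Delta t}^{\ABEM})$ with $\hat{w}_{N,\Delta t}^{\ABEM}$ piecewise constant and agreeing with $\twNDt^{\ABEM}$ at the grid points, the Lipschitz bound~\eqref{eq:GNLip} reduces this to controlling $\sup_t|\twNDt^{\ABEM}(t)-\twNDt^{\ABEM}(\ell_{\Delta t}(t))|$, i.e. the maximal increment of the Itô process $\twNDt^{\ABEM}$ over a single mesh cell. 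Its integrands are piecewise constant with linear growth and $N$-uniformly bounded moments, so each increment has $L^p(\Omega)$-norm of order $\Delta t^{1/2}$; taking the supremum over the $T/\Delta t$ cells costs a logarithmic factor (a Lévy-type modulus of continuity), which is exactly what forces the loss of $\epsilon$ and yields the rate $\Delta t^{1/2-\epsilon}$ rather than $\Delta t^{1/2}$. This is the delicate step: one must obtain the sharp $1/2-\epsilon$ rate for the projected Euler--Maruyama discretisation of the reflected SDE (as in~\cite{MR1357657,SLOMINSKI1994197}) while keeping every constant independent of $N$, which is guaranteed here by the $N$-uniform Lipschitz constant $4$ of $\Gamma_N$ in Lemma~\ref{lem:GNLipGB} together with the $N$-uniform Lipschitz constants $\hat{L}_f,\hat{L}_g$ of Lemma~\ref{lem:fNgNLip}. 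The same maximal-increment estimate finally transfers the bound from the interpolant $\txiNDt^{\ABEM}$ to the scheme $\xiNDt^{\ABEM}$ by the triangle inequality, completing the verification of Assumption~\ref{ass:BPRSDEscheme}.
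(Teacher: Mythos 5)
Your proposal is correct and follows essentially the same route as the paper's proof: verifying Assumption~\ref{ass:BPRSDEscheme} via Theorem~\ref{th:BPschemeConv}, passing through the continuous interpolant $\txiNDt^{\ABEM}=\Gamma_{N}(\twNDt^{\ABEM})$, using the $N$-uniform Lipschitz bound~\eqref{eq:GNLip} together with Lemmas~\ref{lem:fNgNdiff}, \ref{lem:IntffNerr} and~\ref{lem:IntggNerr}, and identifying the maximal Brownian increment over the mesh cells (the paper's bound~\eqref{eq:boundBM}) as the source of the $\epsilon$ loss before closing with Gr\"onwall. The only difference is bookkeeping: you run Gr\"onwall on $|\xi_{N}-\txiNDt^{\ABEM}|$ and transfer to $\xiNDt^{\ABEM}$ by the triangle inequality, whereas the paper Gr\"onwalls the full error $|\xiNDt^{\ABEM}-\xi_{N}|$ directly; both are equivalent.
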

\begin{proof}[Proof of Theorem~\ref{th:ABEMLpConv}]
We verify that Assumption~\ref{ass:BPRSDEscheme} is fulfilled and the result then follows from Theorem~\ref{th:BPschemeConv}. The proof follows the same proof idea as in \cite{MR1357657} with the extra issue that the coefficient functions of the RSDE for $\xi_{N}$ in~\eqref{eq:RmodSDE} are not the same as in the definition of the Artificial Barriers EM scheme $\hat{\xi}^{\ABEM}_{N,\Delta t}$. In view of Proposition~\ref{prop:ABEMmomBounds}, it remains to prove that
\begin{equation*}
\E \left[ \sup_{t \in [0,T]} |\hat{\xi}^{\ABEM}_{N,\Delta t}(t) - \xi_{N}(t)|^{p} \right] \leq C(p,T,L_{f},L_{g},a,b,x_{0}) \left( N^{-p} + \Delta t^{p/2-\epsilon} \right),
\end{equation*}
with a constant independent of $N \in \mathbb{N}$ and of $\Delta t>0$ and independent of $b$ if $b=\infty$. To this end, we split the error as
\begin{equation}\label{eq:ABEMproofErrSplit}
\left| \hat{\xi}^{\ABEM}_{N,\Delta t}(t) - \xi_{N}(t) \right|^{p} \leq C(p) \left( \left| \hat{\xi}^{\ABEM}_{N,\Delta t}(t) - \txiNDt^{\ABEM}(t) \right|^{p} + \left| \txiNDt^{\ABEM}(t) - \xi_{N}(t) \right|^{p} \right)
\end{equation}
and estimate each term separately.

We first consider the first term on the right hand side of equation~\eqref{eq:ABEMproofErrSplit}. By the Lipschitz property of the Skorokhod map $\Gamma_{N}$ (see Lemma~\ref{lem:GNLipGB}), we can bound
\begin{align*}
\sup_{t \in [0,r]} \left| \hat{\xi}^{\ABEM}_{N,\Delta t}(t) - \txiNDt^{\ABEM}(t) \right|^{p} &= \sup_{t \in [0,r]} \left| \Gamma_{N}(\hat{w}_{N,\Delta t}^{\ABEM})(t) - \Gamma_{N}(\twNDt^{\ABEM})(t) \right|^{p} \\ &\leq C(p) \sup_{t \in [0,r]} \left| \hat{w}_{N,\Delta t}^{\ABEM}(t) - \twNDt^{\ABEM}(t) \right|^{p}.
\end{align*}
We use the definition of $\twNDt^{\ABEM}(t)$, that $\hat{w}_{N,\Delta t}^{\ABEM}(t)$ and $\twNDt^{\ABEM}(t)$ coincide on the grid points and that $\hat{w}_{N,\Delta t}^{\ABEM}(t)$ is piecewise constant to see that
\begin{align*}
\twNDt^{\ABEM}(t) &= \hat{w}_{N,\Delta t}^{\ABEM}(t_{m}) + \int_{t_{m}}^{t} f(\xiNDt^{\ABEM}(s)) \diff s + \int_{t_{m}}^{t} g(\xiNDt^{\ABEM}(s)) \diff B(s) \\ &= \hat{w}_{N,\Delta t}^{\ABEM}(t) + \int_{t_{m}}^{t} f(\xiNDt^{\ABEM}(s)) \diff s + \int_{t_{m}}^{t} g(\xiNDt^{\ABEM}(s)) \diff B(s) \\ &= \hat{w}_{N,\Delta t}^{\ABEM}(t) + f(\xiNDt^{\ABEM}(t_{m}))(t-t_{m}) + g(\xiNDt^{\ABEM}(t_{m}))(B(t)-B(t_{m}))
\end{align*}
for $t \in [t_{m},t_{m+1})$. Moving $\hat{w}_{N,\Delta t}^{\ABEM}(t)$ to the left hand side we thus have
\begin{align*}
\E \left[ \sup_{t \in [0,r]} \left| \hat{w}_{N,\Delta t}^{\ABEM}(t) - \twNDt^{\ABEM}(t) \right|^{p} \right] &\leq C(p) \E \left[ \sup_{m=0,\ldots,M-1} \sup_{t \in [t_{m},t_{m+1})} \left| f(\xiNDt^{\ABEM}(t_{m}))(t-t_{m}) \right|^{p} \right] \\ &+ C(p) \E \left[ \sup_{m=0,\ldots,M-1} \sup_{t \in [t_{m},t_{m+1})} \left| g(\xiNDt^{\ABEM}(t_{m}))(B(t)-B(t_{m})) \right|^{p} \right]
\end{align*}
The first term on the right hand side can be bounded by $C(p,T,L_{f},L_{g},a,x_{0}) \Delta t^{p}$ if $b=\infty$ and by $C(p,L_{f},a,b) \Delta t^{p}$ if $|b| < \infty$ by linear growth of $f$ on $D$ in~\eqref{eq:fLG} and the moment bounds on $\xiNDt^{\ABEM}$ in Proposition~\ref{prop:ABEMmomBounds}. For the second term, we first use Cauchy-Schwarz inequality to separate the term involving $g$ and the term involving $B(t)-B(t_{m})$ as such:
\begin{align*}
\E \left[ \sup_{m=0,\ldots,M-1} \sup_{t \in [t_{m},t_{m+1})} \left| g(\xiNDt^{\ABEM}(t_{m-1}))(B(t)-B(t_{m})) \right|^{p} \right] &\leq \left( \E \left[ \sup_{m=0,\ldots,M-1} \sup_{t \in [t_{m},t_{m+1})} \left| g(\xiNDt^{\ABEM}(t_{m})) \right|^{2p} \right] \right)^{1/2} \\ &\times \left(\E \left[ \sup_{m=0,\ldots,M-1} \sup_{t \in [t_{m},t_{m+1})} \left| (B(t)-B(t_{m})) \right|^{2p} \right] \right)^{1/2}
\end{align*}
of which the first term can be bounded by $C(p,T,L_{f},L_{g},a,x_{0}) \Delta t^{p}$ if $b=\infty$ and by $C(p,L_{g},a,b) \Delta t^{p}$ if $|b| < \infty$ by linear growth of $g$ on $D$ in~\eqref{eq:gLG} and the moment bounds on $\xiNDt^{\ABEM}$ in Proposition~\ref{prop:ABEMmomBounds}. The second term can be bounded by
\begin{equation}\label{eq:boundBM}
\left(\E \left[ \sup_{m=0,\ldots,M-1} \sup_{t \in [t_{m},t_{m+1})} \left| (B(t)-B(t_{m})) \right|^{2p} \right] \right)^{1/2} \leq C(p,T) \Delta t^{p/2-\epsilon},
\end{equation}
for every $\epsilon>0$ (see, for example, Lemma $3$ in \cite{SLOMINSKI1994197}). Note that the bound in~\eqref{eq:boundBM} can be improved (in the limit) here by using, for example, Lemma $4.4$ in \cite{MR1357657} ($p=2$) or Lemma A.$4$ in \cite{MR1840835}. Thus, we have the estimate
\begin{equation}\label{eq:ABEMproof1st}
\E \left[ \sup_{t \in [0,r]} \left| \xiNDt^{\ABEM}(t_{m}) - \txiNDt^{\ABEM}(t) \right|^{p} \right] \leq C(p,T,L_{f},L_{g},a,b,x_{0}) \Delta t^{p/2 - \epsilon},
\end{equation}
where the constant is independent of $b$ and $x_{0}$ if $|b|<\infty$, for the first term on the right hand side of equation~\eqref{eq:ABEMproofErrSplit}.

We now consider the second term of the right hand side of equation~\eqref{eq:ABEMproofErrSplit}. We first, again, use the Lipschitz estimate for the Skorokhod map $\Gamma_{N}$ in Lemma~\ref{lem:GNLipGB}
\begin{align*}
\E \left[ \sup_{t \in [0,r]} \left| \txiNDt^{\ABEM}(t) - \xi_{N}(t) \right|^{p} \right] &= \E \left[ \sup_{t \in [0,r]} \left| \Gamma_{N}(\twNDt^{\ABEM})(t) - \Gamma_{N}(\wN)(t) \right|^{p} \right] \\ &\leq C(p) \E \left[ \sup_{t \in [0,r]} \left| \twNDt^{\ABEM}(t) - \wN(t) \right|^{p} \right].
\end{align*}
Recall that
\begin{equation*}
\twNDt^{\ABEM}(t) = x_{0} + \int_{0}^{t} f(\xiNDt^{\ABEM}(s)) \diff s + \int_{0}^{t} g(\xiNDt^{\ABEM}(s)) \diff B(s)
\end{equation*}
\begin{equation*}
\wN(t) = x_{0} + \int_{0}^{t} f_{N}(\xiN(s)) \diff s + \int_{0}^{t} g_{N}(\xiN(s)) \diff B(s)
\end{equation*}
where $f_{N}$ and $g_{N}$ are the modified drift and diffusion coefficient functions introduced in Section~\ref{sec:artBarr}. Therefore
\begin{align*}
\E \left[ \sup_{t \in [0,r]} \left| \twNDt^{\ABEM}(t) - \wN(t) \right|^{p} \right] &\leq C(p) \E \left[ \sup_{t \in [0,r]} \left| \int_{0}^{t} f(\xiNDt^{\ABEM}(s)) - f_{N}(\xiN(s)) \diff s \right|^{p} \right] \\ &+ C(p) \E \left[ \sup_{t \in [0,r]} \left| \int_{0}^{t} g(\xiNDt^{\ABEM}(s)) - g_{N}(\xiN(s)) \diff B(s) \right|^{p} \right] \\ &\leq C(p,T,L_{f},L_{g},a,b) \int_{0}^{t}  \E \left[ \sup_{t \in [0,s]} \left| \xiNDt^{\ABEM}(t) - \xiN(t) \right|^{p} \right] \diff s \\ &+ C(p,T,L_{f},L_{g},a,b) N^{-p} \left( 1 + \E \left[ \sup_{t \in [0,T]} \left| \xiNDt^{\ABEM}(t) \right|^{p} \right] \right),
\end{align*}
with the constants being independent of $a$ and $b$ if $b=\infty$, where we also used Lemma~\ref{lem:IntffNerr} and Lemma~\ref{lem:IntggNerr} (and connecting remarks). Inserting this, the moment bounds in Proposition~\ref{prop:ABEMmomBounds} and the estimate~\eqref{eq:ABEMproof1st} into the splitting of the error in~\eqref{eq:ABEMproofErrSplit}, we obtain
\begin{align*}
\E \left[ \sup_{t \in [0,r]} \left| \hat{\xi}_{N,\Delta t}^{\ABEM}(t) - \xi_{N}(t) \right|^{p} \right] &\leq C(p,T,L_{f},L_{g},a,b,x_{0}) \left( N^{-p} + \Delta t^{p/2-\epsilon} \right) \\ &+ C(p,T,L_{f},L_{g},a,b) \int_{0}^{t} \E \left[ \sup_{t \in [0,s]} \left| \hat{\xi}_{N,\Delta t}^{\ABEM}(t) - \xi_{N}(t) \right|^{p} \right] \diff s
\end{align*}
and by applying Grönwall's inequality we deduce that
\begin{equation*}
\E \left[ \sup_{t \in [0,T]} \left| \hat{\xi}_{N,\Delta t}^{\ABEM}(t) - \xi_{N}(t) \right|^{p} \right] \leq C(p,T,L_{f},L_{g},a,b,x_{0}) (N^{-p} + \Delta t^{p/2-\epsilon}),
\end{equation*}
where the constant is independent of $b$ and $x_{0}$ if $b=\infty$, which is the desired estimate and this concludes the proof.
\end{proof}

As in Corollary~\ref{corr:BPschemeConv}, we couple $N \in \mathbb{N}$ and $\Delta t>0$ and obtain almost sure pathwise convergence with rate $1/2-\epsilon$ for every $\epsilon>0$. We verified in the proof of Theorem~\ref{th:ABEMLpConv} that $\hat{\xi}^{\ABEM}_{N,\Delta t}$ satisfy Assumption~\ref{ass:BPRSDEscheme}, hence Corollary~\ref{corr:ABEMasConv} follows from Corollary~\ref{corr:BPschemeConv}.
\begin{corollary}\label{corr:ABEMasConv}
Suppose Assumptions~\ref{ass:Lip} and~\ref{ass:Feller} are satisfied and let $N \in \mathbb{N}$ and $\Delta t = N^{-2}$. Then there exists, for every $\epsilon>0$, a random variable $\eta_{\epsilon}$, with $\E \left[ \eta_{\epsilon}^{p} \right] < \infty$ for every $p \geq 1$, such that
\begin{equation*}
\sup_{t \in [0,T]} |\hat{\xi}^{\ABEM}_{N,\Delta t}(t) - X(t)| \leq \eta_{\epsilon} \Delta t^{1/2 - \epsilon},
\end{equation*}
almost surely. Moreover, $\eta_{\epsilon}$ is independent of $N \in \mathbb{N}$ and $\Delta t$.
\end{corollary}

\section{Artificial Barriers Euler--Peano}\label{sec:ABEP}
In this section, we combine the method of artificial barriers developed in Section~\ref{sec:artBarr} with a modification of the classical Euler--Peano (EP) scheme for RSDEs applied to the RSDE with modified coefficients in~\eqref{eq:RmodSDE}. We refer to the resulting scheme as the Artificial Barriers Euler--Peano scheme and we abbreviate it as ABEP. We first provide the definition of the scheme and then we verify that the ABEP scheme satisfy Assumption~\ref{ass:BPRSDEscheme} in Section~\ref{sec:artBarr}. By Theorem~\ref{th:BPschemeConv}, we can then conclude that the ABEP scheme is boundary-preserving and converges to the solution $X$ of the considered SDE~\eqref{eq:SDE}. 

The classical EP scheme for RSDEs is known to exhibit $L^{p}(\Omega)$-convergence of order $1/2$ (see, for example, \cite{SLOMINSKI1994197}). The goal in this section is to show that this is also true for the ABEP scheme for the RSDE in~\eqref{eq:RmodSDE} with an $L^{p}(\Omega)$-error constant that can be taken to be independent of $N$, and hence that the ABEP scheme satisfies Assumption~\ref{ass:BPRSDEscheme}. 

We would like to make an important remark before proceeding to the definition of the ABEP scheme. There are explicit expressions for the EP and ABEP schemes (see equation~\eqref{eq:ABEPexplicit} for the ABEP scheme). However, as the supremum and infimum in Definition~\ref{def:Gexplicit} and Definition~\ref{def:G0aExplicit} are over infinitely many time points, the EP and ABEP schemes are not known to be implementable.  In other words, as of now, the ABEP scheme is of more theoretical than practical use. We include the ABEP scheme as it may be used to construct other implementable schemes (for example, a modified version of the Euler scheme for RSDEs studied in \cite{MR1341164}).

We now provide a step-by-step construction of the ABEP scheme. We refer the reader not familiar with RSDEs to equation~\eqref{eq:ABEPexplicit} for a representation of the ABEP scheme without any mentioning of RSDEs. Recall that the classical EM scheme for the SDE in~\eqref{eq:SDEmain} is given by
\begin{equation*}
X^{\EM}_{\Delta t}(t_{m+1}) = X^{\EM}_{\Delta t}(t_{m}) + f(X^{\EM}_{\Delta t}(t_{m})) \Delta t + g(X^{\EM}_{\Delta t}(t_{m})) \Delta B_{m},
\end{equation*}
for $m=0,\ldots,M-1$, and, for the purpose of defining the ABEP scheme, we extend the above to continuous time by requiring that $X^{\EM}_{\Delta t}(t)$ solves the following SDE with piecewise constant coefficients
\begin{equation*}
\diff X^{\EM}_{\Delta t}(t) = f(X^{\EM}_{\Delta t}(\ell_{\Delta t}(t))) \diff t + g(X^{\EM}_{\Delta t}(\ell_{\Delta t}(t))) \diff B(t),\ t \in [0,T],
\end{equation*}
where we recall that $\ell_{\Delta t}(t) = t_{m}$ for $t \in [t_{m},t_{m+1})$, with initial value $X^{\EM}_{\Delta t}(0) = x_{0}$. We now add a lower barrier process $\hat{L}_{N,\Delta t}^{\ABEP}$ and an upper barrier process $\hat{U}_{N,\Delta t}^{\ABEP}$, in the sense of Section~\ref{sec:RSDE}, to force $X^{\EM}_{\Delta t}$ to stay in $\CDDN$:
\begin{equation*}
\diff \hat{\xi}^{\ABEP}_{N,\Delta t}(t) = f(\hat{\xi}^{\ABEP}_{N,\Delta t}(\ell_{\Delta t}(t))) \diff t + g(\hat{\xi}^{\ABEP}_{N,\Delta t}(\ell_{\Delta t}(t))) \diff B(t) - \diff \hat{U}_{N,\Delta t}^{\ABEP}(t) + \diff \hat{L}_{N,\Delta t}^{\ABEP}(t),\ t \in [0,T],
\end{equation*}
or equivalently in integral from
\begin{equation}\label{eq:ABEPdef}
\hat{\xi}^{\ABEP}_{N,\Delta t}(t) = x_{0} + \int_{0}^{t} f(\hat{\xi}^{\ABEP}_{N,\Delta t}(\ell_{\Delta t}(s))) \diff s + \int_{0}^{t} g(\hat{\xi}^{\ABEP}_{N,\Delta t}(\ell_{\Delta t}(s))) \diff B(s) - \hat{U}_{N,\Delta t}^{\ABEP}(t) + \hat{L}_{N,\Delta t}^{\ABEP}(t),\ t \in [0,T].
\end{equation}
Observe that, if $b=\infty$ then $\hat{U}_{N,\Delta t}^{\ABEP}$ can be removed with no change. We also introduce the process
\begin{equation}\label{eq:wABEPdef}
\hat{w}_{N,\Delta t}^{\ABEP}(t) = x_{0} + \int_{0}^{t} f(\hat{\xi}^{\ABEP}_{N,\Delta t}(\ell_{\Delta t}(s))) \diff s + \int_{0}^{t} g(\hat{\xi}^{\ABEP}_{N,\Delta t}(\ell_{\Delta t}(s))) \diff B(s),\ t \in [0,T],
\end{equation}
to obtain an associated triple $(\hat{\xi}^{\ABEP}_{N,\Delta t},\hat{w}_{N,\Delta t}^{\ABEP},\hat{L}_{N,\Delta t}^{\ABEP}-\hat{U}_{N,\Delta t}^{\ABEP})$ in the sense of Section~\ref{sec:RSDE}. Then, in particular, we have that $\hat{\xi}^{\ABEP}_{N,\Delta t} = \Gamma_{N}(\hat{w}_{N,\Delta t}^{\ABEP})$. Note that~\eqref{eq:wABEPdef} on $[\ell_{\Delta t}(s),\ell_{\Delta t}(s)+\Delta t]$ is the classical EM scheme starting from the value $\hat{\xi}^{\ABEP}_{N,\Delta t}(\ell_{\Delta t}(s))$. Moreover, using the explicit expressions for the Skorokhod map in Definition~\ref{def:G0aExplicit}, we have the following representation for $\hat{\xi}^{\ABEP}_{N,\Delta t}$ in the two-finite-boundaries case ($|b|<\infty$)
\begin{equation}\label{eq:ABEPexplicit}
  \begin{split}
    \hat{\xi}^{\ABEP}_{N,\Delta t}(t) &= a + \frac{1}{N} + \hat{w}_{N,\Delta t}^{\ABEP}(t) + \sup_{s \in [0,t]} \max(0,-\hat{w}_{N,\Delta t}^{\ABEP}(s)) \\ &- \sup_{s \in [0,t]} \left( \min \left( \max \left(0, \hat{w}_{N,\Delta t}^{\ABEP}(s) + \sup_{r \in [0,s]} \max \left( 0,-\hat{w}_{N,\Delta t}^{\ABEP}(r) \right) - \left( b - a - \frac{2}{N},\right) \right), \right. \right. \\ &\left. \left. \inf_{u \in [s,t]} \left( \hat{w}_{N,\Delta t}^{\ABEP}(u) + \sup_{v \in [0,u]} \max \left( 0 , -\hat{w}_{N,\Delta t}^{\ABEP}(v) \right) \right) \right) \right),\ t \in [0,T].
  \end{split}
\end{equation}
The involved formula in~\eqref{eq:ABEPexplicit} is simplified in the one-finite-boundary case ($b=\infty$), but the ABEP scheme is not implementable in this case either.

The following proposition provides moment bounds and boundary-preservation of the ABEP scheme. Since the ABEP scheme is defined in~\eqref{eq:ABEPdef} as the solution to an RSDE, the boundary-preserving property of the ABEP scheme follows directly. 
\begin{theorem}\label{th:ABEPmomBounds}
Suppose Assumptions~\ref{ass:Lip} and~\ref{ass:Feller} are satisfied and let $N \in \mathbb{N}$ and $\Delta t>0$. Then the ABEP scheme satisfies moment bounds
\begin{equation*}
\E \left[ \sup_{t \in [0,T]} \left| \hat{\xi}^{\ABEP}_{N,\Delta t}(t)\right|^{p} \right] \leq
\begin{cases}
C(p,T,L_{f},L_{g},a,x_{0}), \text{ if } b = \infty,\\
\max(|a|^{p},|b|^{p}), \text{ if } |b| < \infty,
\end{cases}
\end{equation*}
for every $p \geq 2$, and only takes values in $\CDDN$
\begin{equation*}
\PP \left( \hat{\xi}^{\ABEP}_{N,\Delta t}(t) \in \CDDN,\ \forall t \in [0,T] \right)= 1.
\end{equation*}
The constant $C(p,T,L_{f},L_{g},a,x_{0})$ is independent of $N \in \mathbb{N}$ and $\Delta t$.
\end{theorem}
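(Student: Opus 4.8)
The plan is to follow the same strategy as in the proof of Proposition~\ref{prop:ABEMmomBounds}, exploiting the representation $\hat{\xi}^{\ABEP}_{N,\Delta t} = \Gamma_{N}(\hat{w}_{N,\Delta t}^{\ABEP})$ together with the uniform-in-$N$ bounds for the Skorokhod map $\Gamma_{N}$ in Lemma~\ref{lem:GNLipGB}. The boundary-preservation statement is immediate: since $\hat{\xi}^{\ABEP}_{N,\Delta t} = \Gamma_{N}(\hat{w}_{N,\Delta t}^{\ABEP})$ and $\Gamma_{N}$ is the Skorokhod map associated with $\CDDN$, we have $\hat{\xi}^{\ABEP}_{N,\Delta t}(t) \in \CDDN$ for every $t \in [0,T]$ by construction. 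Likewise, the two-finite-boundaries case $|b|<\infty$ requires no work, because $\CDDN \subset [a,b]$ forces $|\hat{\xi}^{\ABEP}_{N,\Delta t}(t)| \leq \max(|a|,|b|)$ pointwise (equivalently, by the boundedness estimate $\sup_{t}|\Gamma_{N}(\psi)(t)| \leq \max(|a|,|b|)$ in Lemma~\ref{lem:GNLipGB}), so that $\E[\sup_{t \in [0,T]}|\hat{\xi}^{\ABEP}_{N,\Delta t}(t)|^{p}] \leq \max(|a|^{p},|b|^{p})$.

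It remains to treat $b=\infty$, which is the only place requiring real work. First I would reduce the bound on $\hat{\xi}^{\ABEP}_{N,\Delta t}$ to a bound on the auxiliary process $\hat{w}_{N,\Delta t}^{\ABEP}$ via the growth bound~\eqref{eq:GNGB}:
\begin{equation*}
\E\left[\sup_{t\in[0,T]}|\hat{\xi}^{\ABEP}_{N,\Delta t}(t)|^{p}\right] = \E\left[\sup_{t\in[0,T]}|\Gamma_{N}(\hat{w}_{N,\Delta t}^{\ABEP})(t)|^{p}\right] \leq C(p)\left(\E\left[\sup_{t\in[0,T]}|\hat{w}_{N,\Delta t}^{\ABEP}(t)|^{p}\right] + |a|^{p} + 1\right),
\end{equation*}
where crucially the constant in~\eqref{eq:GNGB} is independent of $N$. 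Unlike the ABEM case, no induction over grid points is needed here, since $\hat{w}_{N,\Delta t}^{\ABEP}$ is already defined in continuous time by~\eqref{eq:wABEPdef} with the piecewise-constant integrand $\hat{\xi}^{\ABEP}_{N,\Delta t}(\ell_{\Delta t}(s)) = \Gamma_{N}(\hat{w}_{N,\Delta t}^{\ABEP})(\ell_{\Delta t}(s))$.

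Next I would estimate the moments of $\hat{w}_{N,\Delta t}^{\ABEP}$ by a Grönwall argument. Splitting~\eqref{eq:wABEPdef} into its drift and diffusion parts, applying Jensen's inequality to the drift integral and the Burkholder--Davis--Gundy inequality to the Itô integral, and using the linear growth of $f$ and $g$ on $D$ in~\eqref{eq:fLG} and~\eqref{eq:gLG} together with~\eqref{eq:GNGB} to estimate
\begin{equation*}
\left|f\bigl(\Gamma_{N}(\hat{w}_{N,\Delta t}^{\ABEP})(\ell_{\Delta t}(s))\bigr)\right| \leq C(L_{f})\left(1 + 2\sup_{u\in[0,s]}|\hat{w}_{N,\Delta t}^{\ABEP}(u)| + |a| + 1\right),
\end{equation*}
and analogously for $g$, one arrives (after a standard localisation by a stopping time to guarantee a priori finiteness of the moments) at
\begin{equation*}
\E\left[\sup_{t\in[0,r]}|\hat{w}_{N,\Delta t}^{\ABEP}(t)|^{p}\right] \leq C(p,T,L_{f},L_{g},a,x_{0})\left(1 + \int_{0}^{r}\E\left[\sup_{t\in[0,s]}|\hat{w}_{N,\Delta t}^{\ABEP}(t)|^{p}\right]\diff s\right).
\end{equation*}
Grönwall's inequality then closes the estimate and, combined with the reduction above, yields the claimed moment bound. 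The only point demanding care is ensuring that every constant is uniform in $N$ and $\Delta t$; this is guaranteed because the growth bound for $\Gamma_{N}$ in Lemma~\ref{lem:GNLipGB} and the linear-growth constants of $f$ and $g$ do not depend on $N$, so the resulting Grönwall constant is $N$-free. I expect no substantial obstacle beyond this bookkeeping, the argument being essentially that of Proposition~\ref{prop:ABEMmomBounds} but simplified by the continuous-time definition of $\hat{w}_{N,\Delta t}^{\ABEP}$.
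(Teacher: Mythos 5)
Your proposal is correct and follows essentially the same route as the paper's proof: reduce to the auxiliary process $\hat{w}_{N,\Delta t}^{\ABEP}$ via the uniform-in-$N$ growth bound for $\Gamma_{N}$, then split~\eqref{eq:wABEPdef} into drift and It\^o parts, apply Jensen's and the BDG inequalities together with the linear growth of $f$ and $g$ on $D$, and close with Gr\"onwall. Your added remarks (no grid-point induction needed, the optional stopping-time localisation) are accurate refinements of the same argument.
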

\begin{proof}[Proof of Theorem~\ref{th:ABEPmomBounds}]
Let $p \geq 2$. The case of a bounded domain ($|a| + |b| < \infty$) is immediate since $|\xiNDt^{\ABEP}(t)| = |\Gamma_{N}(\hat{w}_{N,\Delta t}^{\ABEP})|$ is bounded by $\max(|a|,|b|)$ in this case (see Lemma~\ref{lem:GNLipGB}). Suppose now that $b=\infty$. We reduce the desired moment estimates on $\xiNDt^{\ABEP}$ to the corresponding moment estimates for $\hat{w}_{N,\Delta t}^{\ABEP}$ using Lemma~\ref{lem:GNLipGB}
\begin{equation*}
\E \left[ \sup_{t \in [0,T]} \left| \hat{\xi}_{N,\Delta t}^{\ABEP}(t)\right|^{p} \right] = \E \left[ \sup_{t \in [0,T]} \left| \Gamma_{N} \left(\hat{w}_{N,\Delta t}^{\ABEP}\right)(t) \right|^{p} \right] \leq C(p) \left( \E \left[ \sup_{t \in [0,T]} \left| \hat{w}_{N,\Delta t}^{\ABEP}(t) \right|^{p} \right] + |a|^{p} + 1 \right).
\end{equation*}

We use~\eqref{eq:wABEPdef} to estimate
\begin{align*}
&\sup_{t \in [0,r]} |\hat{w}_{N,\Delta t}^{\ABEP}(t)|^{p} \\ &\leq C(p) \left( |x_{0}|^{p} + \sup_{t \in [0,r]} \left| \int_{0}^{t} f(\Gamma_{N}(\hat{w}_{N,\Delta t}^{\ABEP})(\ell_{\Delta t}(s))) \diff s \right|^{p} + \sup_{t \in [0,r]} \left| \int_{0}^{t} g(\Gamma_{N}(\hat{w}_{N,\Delta t}^{\ABEP})(\ell_{\Delta t}(s))) \diff B(s) \right|^{p} \right) \\ &= C(p) \left( |x_{0}|^{p} + I_{1} + I_{2} \right).
\end{align*}
For $I_{1}$, we use Jensen's inequality for integrals and the linear growth of $f$ on $D$ (see Assumption~\ref{ass:Lip}) and of $\Gamma_{N}$ (see Lemma~\ref{lem:GNLipGB}) to obtain
\begin{equation*}
I_{1} \leq C(p,T,L_{f}) \left( \int_{0}^{r} \sup_{t \in [0,s]} |\hat{w}_{N,\Delta t}^{\ABEP}(t)|^{p} \diff s + |a|^{p} + 1 \right)
\end{equation*}
and for $I_{2}$ we use the BDG inequality, Jensen's inequality for integrations and linear growth of $g$ on $D$ (see Assumption~\ref{ass:Lip}) and of $\Gamma_{N}$ (see~\eqref{eq:GNGB}) to estimate
\begin{equation*}
\E \left[ I_{2} \right] \leq C(p,T,L_{g}) \left( \int_{0}^{r} \E \left[ \sup_{t \in [0,s]} |\hat{w}_{N,\Delta t}^{\ABEP}(t)|^{p} \right] \diff s + |a|^{p} + 1 \right).
\end{equation*}
Thus, in total
\begin{equation*}
\E \left[ \sup_{t \in [0,r]} |\hat{w}_{N,\Delta t}^{\ABEP}(t)|^{p} \right] \leq C(p,T,L_{f},L_{g}) \left( |x_{0}|^{p} + \int_{0}^{r} \E \left[ \sup_{t \in [0,s]} |\hat{w}_{N,\Delta t}^{\ABEP}(t)|^{p} \right] \diff s + |a|^{p} + 1 \right)
\end{equation*}
and Grönwall's lemma gives the desired estimate
\begin{equation*}
\E \left[ \sup_{t \in [0,T]} |\hat{w}_{N,\Delta t}^{\ABEP}(t)|^{p} \right] \leq C(p,T,L_{f},L_{g},a,x_{0}).
\end{equation*}
\end{proof}

We now state and prove the convergence theorem of this section; namely, the $L^{p}(\Omega)$-convergence, for every $p \geq 2$, of $\hat{\xi}^{\ABEP}_{N,\Delta t}$ to the solution $X$ of the considered SDE~\eqref{eq:SDE}. As for Theorem~\ref{th:ABEMLpConv}, the proof of Theorem~\ref{th:ABEPLpConv} verifies that Assumption~\ref{ass:BPRSDEscheme} is fulfilled and the result then follows from Theorem~\ref{th:BPschemeConv}. Note that convergence of the EP scheme is known (see, for example, \cite{SLOMINSKI1994197}), we include it since, in contrast to the EP scheme, the coefficient functions of the RSDE for $\xi_{N}$ are not the same as in the definition of the modified EP scheme $\hat{\xi}^{\ABEP}_{N,\Delta t}$.
\begin{theorem}\label{th:ABEPLpConv}
Suppose Assumptions~\ref{ass:Lip} and~\ref{ass:Feller} are satisfied and let $N \in \mathbb{N}$ and $\Delta t>0$. Then
\begin{equation*}
\E \left[ \sup_{t \in [0,T]} \left| \hat{\xi}^{\ABEP}_{N,\Delta t}(t) - X(t) \right|^{p} \right] \leq C(p,T,L_{f},L_{g},a,b,x_{0}) \left( N^{-p} + \Delta t^{p/2} \right),
\end{equation*}
for every $p \geq 2$, where the constant is independent of $N \in \mathbb{N}$ and $\Delta t$. If $b=\infty$, then the constant is independent of $b$.
\end{theorem}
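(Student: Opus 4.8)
The plan is to verify that $\hat{\xi}^{\ABEP}_{N,\Delta t}$ satisfies Assumption~\ref{ass:BPRSDEscheme} with $\gamma = 1/2$ and then invoke Theorem~\ref{th:BPschemeConv}. Boundary-preservation and the required moment bounds are already supplied by Theorem~\ref{th:ABEPmomBounds}, so the whole task reduces to proving the strong error estimate
\begin{equation*}
\E \left[ \sup_{t \in [0,T]} |\hat{\xi}^{\ABEP}_{N,\Delta t}(t) - \xi_{N}(t)|^{p} \right] \leq C(p,T,L_{f},L_{g},a,b,x_{0}) \left( N^{-p} + \Delta t^{p/2} \right),
\end{equation*}
with constant independent of $N$ and $\Delta t$. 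Since $\hat{\xi}^{\ABEP}_{N,\Delta t} = \Gamma_{N}(\hat{w}_{N,\Delta t}^{\ABEP})$ and $\xi_{N} = \Gamma_{N}(\wN)$, the Lipschitz property of $\Gamma_{N}$ in Lemma~\ref{lem:GNLipGB} reduces this to controlling $\E[\sup_{t \in [0,r]} |\hat{w}_{N,\Delta t}^{\ABEP}(t) - \wN(t)|^{p}]$, where $\hat{w}_{N,\Delta t}^{\ABEP}$ and $\wN$ are given by~\eqref{eq:wABEPdef} and~\eqref{eq:wNdef}, respectively.

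The decisive preliminary step, and the main obstacle, is a time-regularity estimate for the scheme itself, namely
\begin{equation*}
\E \left[ |\hat{\xi}^{\ABEP}_{N,\Delta t}(t) - \hat{\xi}^{\ABEP}_{N,\Delta t}(s)|^{p} \right] \leq C |t-s|^{p/2}, \qquad 0 \leq s \leq t \leq T,
\end{equation*}
uniformly in $N$ and $\Delta t$. The Lipschitz bound of Lemma~\ref{lem:GNLipGB} is a global sup-norm statement and does not by itself control increments, so instead I would use a \emph{local} oscillation bound for the Skorokhod map: from the explicit formula in Definition~\ref{def:Gexplicit} one checks that $|\Gamma^{0}(\psi)(t) - \Gamma^{0}(\psi)(s)| \leq 2 \sup_{u \in [s,t]} |\psi(u) - \psi(s)|$, because the reflection term is non-decreasing and its increment over $[s,t]$ is dominated by the oscillation of $\psi$ there; the analogous local estimate for the two-sided map $\Gamma^{0,\alpha}$, hence for $\Gamma_{N}$ (an affine shift of $\Gamma^{0}$ or $\Gamma^{0,\alpha}$), follows from the representation in Definition~\ref{def:G0aExplicit}, as in \cite{MR2349573,SLOMINSKI1994197}. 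Combining this with an increment bound for $\hat{w}_{N,\Delta t}^{\ABEP}$, obtained from~\eqref{eq:wABEPdef} by Jensen's inequality, the BDG inequality, the linear growth~\eqref{eq:fLG} and~\eqref{eq:gLG} of $f$ and $g$, and the moment bounds of Theorem~\ref{th:ABEPmomBounds}, yields the claimed $|t-s|^{p/2}$ regularity. Taking $s = \ell_{\Delta t}(t)$ then gives the pointwise discretisation bound $\E[|\hat{\xi}^{\ABEP}_{N,\Delta t}(t) - \hat{\xi}^{\ABEP}_{N,\Delta t}(\ell_{\Delta t}(t))|^{p}] \leq C \Delta t^{p/2}$.

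With the regularity in hand, I would write the integrands of $\hat{w}_{N,\Delta t}^{\ABEP} - \wN$ as the threefold splitting
\begin{equation*}
f(\hat{\xi}^{\ABEP}_{N,\Delta t}(\ell_{\Delta t}(s))) - f_{N}(\xi_{N}(s)) = \bigl[ f(\hat{\xi}^{\ABEP}_{N,\Delta t}(\ell_{\Delta t}(s))) - f(\hat{\xi}^{\ABEP}_{N,\Delta t}(s)) \bigr] + \bigl[ f(\hat{\xi}^{\ABEP}_{N,\Delta t}(s)) - f(\xi_{N}(s)) \bigr] + \bigl[ f(\xi_{N}(s)) - f_{N}(\xi_{N}(s)) \bigr],
\end{equation*}
and likewise for $g$. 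The first bracket is controlled by the Lipschitz continuity of $f$ on $D$ and the pointwise discretisation bound above; crucially, since the drift contribution is estimated by Jensen's inequality and the diffusion contribution by the BDG inequality, these terms appear only under a time integral $\int_{0}^{r} \E[|\hat{\xi}^{\ABEP}_{N,\Delta t}(\ell_{\Delta t}(s)) - \hat{\xi}^{\ABEP}_{N,\Delta t}(s)|^{p}] \diff s \leq C T \Delta t^{p/2}$ and never under a supremum over all grid intervals, which is precisely what removes the $\epsilon$-loss present in the ABEM case (Theorem~\ref{th:ABEMLpConv}). The second bracket, bounded by $L_{f} |\hat{\xi}^{\ABEP}_{N,\Delta t}(s) - \xi_{N}(s)|$, produces the Grönwall term $\int_{0}^{r} \E[\sup_{u \in [0,s]} |\hat{\xi}^{\ABEP}_{N,\Delta t}(u) - \xi_{N}(u)|^{p}] \diff s$, and the third bracket is bounded via Lemma~\ref{lem:fNgNdiff} and the moment bounds by a constant times $N^{-p}$. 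Assembling these estimates and applying Grönwall's inequality gives the bound $C(N^{-p} + \Delta t^{p/2})$; together with Theorem~\ref{th:ABEPmomBounds} this verifies Assumption~\ref{ass:BPRSDEscheme} with $\gamma = 1/2$, and Theorem~\ref{th:BPschemeConv} then completes the proof.
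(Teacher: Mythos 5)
Your proposal is correct and follows the paper's overall skeleton (verify Assumption~\ref{ass:BPRSDEscheme} with $\gamma=1/2$, reduce via the Lipschitz property of $\Gamma_{N}$ to the driving processes $\hat{w}_{N,\Delta t}^{\ABEP}$ and $\wN$, then Gr\"onwall), but it diverges in the one step that actually carries the argument: the treatment of the time-discretisation error. The paper never needs temporal regularity of the \emph{scheme}. After applying Lemma~\ref{lem:IntffNerr} and Lemma~\ref{lem:IntggNerr} it splits
\begin{equation*}
\Gamma_{N}(\hat{w}_{N,\Delta t}^{\ABEP})(\ell_{\Delta t}(s)) - \Gamma_{N}(\wN)(s) = \bigl[ \hat{\xi}^{\ABEP}_{N,\Delta t}(\ell_{\Delta t}(s)) - \xi_{N}(\ell_{\Delta t}(s)) \bigr] + \bigl[ \xi_{N}(\ell_{\Delta t}(s)) - \xi_{N}(s) \bigr],
\end{equation*}
so that the increment lands on the \emph{exact} reflected process $\xi_{N}$, whose $|t-s|^{p/2}$ regularity is already available from Proposition~\ref{prop:xiNproperties}, equation~\eqref{eq:xNtimeReg}; the first bracket feeds the Gr\"onwall term. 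You instead place the increment on the scheme itself, which forces you to prove $\E[|\hat{\xi}^{\ABEP}_{N,\Delta t}(t)-\hat{\xi}^{\ABEP}_{N,\Delta t}(s)|^{p}] \leq C|t-s|^{p/2}$, and you correctly observe that the paper's Lemma~\ref{lem:GNLipGB} and Lemma~\ref{lem:G0aProp} (sup-norm bounds anchored at time $0$) do not give this: you need the local oscillation estimate $|\Gamma^{0}(\psi)(t)-\Gamma^{0}(\psi)(s)| \leq 2\sup_{u\in[s,t]}|\psi(u)-\psi(s)|$ and its two-sided analogue. That estimate is true (the one-sided case follows from the $1$-Lipschitz continuity of $x\mapsto\max(0,x)$ applied to the running supremum, and the two-sided case is in \cite{MR2349573,SLOMINSKI1994197}), so your route closes, but it imports an auxiliary lemma the paper deliberately avoids. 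The paper's choice is the more economical one here since the regularity of $\xi_{N}$ comes for free from the RSDE theory of Proposition~\ref{prop:RSDEwellDef}; your choice is more self-contained at the level of the scheme and would be the natural one if no regularity statement for $\xi_{N}$ were available. Your identification of why the $\epsilon$-loss of Theorem~\ref{th:ABEMLpConv} disappears (increments appear under a time integral rather than under a supremum over all grid intervals) matches the paper exactly.
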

\begin{proof}[Proof of Theorem~\ref{th:ABEPLpConv}]
In view of Theorem~\ref{th:BPschemeConv} and Theorem~\ref{th:ABEPmomBounds}, it remains to prove that
\begin{equation}\label{eq:ABEMerrEst}
\E \left[ \sup_{t \in [0,T]} |\hat{\xi}^{\ABEP}_{N,\Delta t}(t) - \xi_{N}(t)|^{p} \right] \leq C(p,T,L_{f},L_{g},a,b,x_{0}) \left( N^{-p} + \Delta t^{p/2} \right),
\end{equation}
for some constant independent of $N \in \mathbb{N}$ and independent of $\Delta t>0$, and independent of $b$ if $b=\infty$.  

We first reduce the estimate to the corresponding estimate for $\hat{w}_{N,\Delta t}^{\ABEP}(t) - w_{N}(t)$
\begin{align*}
\E \left[ \sup_{t \in [0,T]} |\xiNDt^{\ABEP}(t) - \xi_{N}(t)|^{p} \right] &= \E \left[ \sup_{t \in [0,T]} |\Gamma_{N}(\hat{w}_{N,\Delta t}^{\ABEP})(t) - \Gamma_{N} (w_{N})(t) |^{p} \right] \\ &\leq C(p) \E \left[ \sup_{t \in [0,T]} |\hat{w}_{N,\Delta t}^{\ABEP}(t) - w_{N}(t)|^{p} \right],
\end{align*}
using the Lipschitz property of $\Gamma_{N}$ in~\eqref{eq:GNLip}, and we prove that the right hand side can be bounded by the right hand side in equation~\eqref{eq:ABEMerrEst} using a Grönwall argument.

By definition of $\hat{w}_{N,\Delta t}^{\ABEP}$ (see~\eqref{eq:wABEPdef}) and $w_{N}$ (see~\eqref{eq:wNdef}), we can estimate
\begin{equation}\label{eq:wNDtwNerr}
\begin{split}
\sup_{t \in [0,r]} &|\hat{w}_{N,\Delta t}^{\ABEP}(t) - w_{N}(t)|^{p} \\ &\leq C(p) \sup_{t \in [0,r]} \left| \int_{0}^{t} f (\Gamma_{N}( \hat{w}_{N,\Delta t}^{\ABEP})(\ell_{\Delta t}(s))) - f_{N}(\Gamma_{N}(w_{N})(s)) \diff s \right|^{p} \\ &+ C(p) \sup_{t \in [0,r]} \left| \int_{0}^{t} g (\Gamma_{N}(\hat{w}_{N,\Delta t}^{\ABEP})(\ell_{\Delta t}(s))) - g_{N}(\Gamma_{N}(w_{N})(s)) \diff B(s) \right|^{p} \\ &= C(p)(J_{1} + J_{2}).
\end{split}
\end{equation}
$\E \left[ J_{1} \right]$ and $\E \left[ J_{2} \right]$ are estimated in the same way, the difference is that we use Lemma~\ref{lem:IntffNerr} for $\E \left[ J_{1} \right]$ and we use Lemma~\ref{lem:IntggNerr} for $\E \left[ J_{2} \right]$ . We provide the argument for $\E \left[ J_{2} \right]$, as the term involving the noise is (usually) the difficult part. Using (remarks to) Lemma~\ref{lem:IntggNerr}, the Lipschitz estimate for $\Gamma_{N}$ in~\eqref{eq:GNLip}, and moment bounds for $\xiNDt^{\ABEP} = \Gamma_{N}(\hat{w}_{N,\Delta t}^{\ABEP})$ in Theorem~\ref{th:ABEPmomBounds}, we can estimate
\begin{align*}
\E \left[ J_{2} \right] &\leq C(p,T,L_{g},a,b) \int_{0}^{r} \E \left[ |\Gamma_{N}(\hat{w}_{N,\Delta t}^{\ABEP})(\ell_{\Delta t}(s)) - \Gamma_{N}(w_{N})(s)|^{p} \right] \diff s \\ &+ C(p,T,L_{g},a,b) N^{-p} \left(1 + \E \left[ \sup_{t \in [0,T]} |\Gamma_{N}(\hat{w}_{N,\Delta t}^{\ABEP})(t)|^{p} \right] \right) \\ &\leq C(p,T,L_{f},L_{g},a,b,x_{0}) \int_{0}^{r} \E \left[ \sup_{t \in [0,s]} | \Gamma_{N} (\hat{w}_{N,\Delta t}^{\ABEP})(t) - \Gamma_{N}(w_{N})(t)|^{p} \right] \diff s \\ &+ C(p,T,L_{f},L_{g},a,b,x_{0}) \left( \int_{0}^{r} \E \left[ | \Gamma_{N} (\wN)(\ell_{\Delta t}(s)) - \Gamma_{N}(w_{N})(s)|^{p} \right] \diff s + N^{-p} \right) \\ &\leq C(p,T,L_{f},L_{g},a,b,x_{0}) \left( \int_{0}^{r} \E \left[ \sup_{t \in [0,s]} | \hat{w}_{N,\Delta t}^{\ABEP}(t) - w_{N}(t)|^{p} \right] \diff s + N^{-p} + \Delta t^{p/2} \right),
\end{align*}
where the constants are independent of $b$ if $b=\infty$ and
where we also used the time regularity in~\eqref{eq:xNtimeReg} to bound 
\begin{equation*}
\E \left[ |\xi_{N}(\ell_{\Delta t}(s))-\xi_{N}(s)|^{p} \right] \leq C(p,L_{f},L_{g},a,b,x_{0}) \Delta t^{p/2}.
\end{equation*}
Similar argument gives the same bound for $\E \left[ J_{1} \right]$
\begin{equation*}
\E \left[ J_{1} \right] \leq C(p,T,L_{f},L_{g},a,b,x_{0}) \left( \int_{0}^{r} \E \left[ \sup_{t \in [0,s]} | \hat{w}_{N,\Delta t}^{\ABEP}(t) - w_{N}(t)|^{p} \right] \diff s + N^{-p} + \Delta t^{p/2} \right),
\end{equation*}
where the constant is independent of $b$ if $b=\infty$. Therefore, applying $\E$ to equation~\eqref{eq:wNDtwNerr} and using the above bounds for $\E \left[ J_{1} \right]$ and $\E \left[ J_{2} \right]$, we obtain
\begin{multline*}
\E \left[ \sup_{t \in [0,r]} |\hat{w}_{N,\Delta t}^{\ABEP}(t) - w_{N}(t)|^{p} \right] \\ \leq C(p,T,L_{f},L_{g},a,b,x_{0}) \left( \int_{0}^{r} \E \left[\sup_{t \in [0,s]}| \hat{w}_{N,\Delta t}^{\ABEP}(t) - w_{N}(t)|^{p} \right]  \diff s + N^{-p} + \Delta t^{p/2} \right)
\end{multline*}
and applying Grönwall's inequality we conclude that
\begin{equation*}
\E \left[ \sup_{t \in [0,T]} |\hat{w}_{N,\Delta t}^{\ABEP}(t) - w_{N}(t)|^{p} \right] \leq C(p,T,L_{f},L_{g},a,b,x_{0}) \left( N^{-p} + \Delta t^{p/2} \right),
\end{equation*}
where the constants are independent of $b$ if $b=\infty$. This finishes the proof.
\end{proof}

From Theorem~\ref{th:ABEPLpConv}, we also obtain almost sure pathwise convergence in Corollary~\ref{corr:ABEPasConv}.
\begin{corollary}\label{corr:ABEPasConv}
Suppose Assumptions~\ref{ass:Lip} and~\ref{ass:Feller} are satisfied and let $N \in \mathbb{N}$ and $\Delta t = N^{-2}$. Then there exists for every $\epsilon>0$ a random variable $\eta_{\epsilon}$, with $\E \left[ \eta_{\epsilon}^{p} \right] < \infty$ for every $p \geq 1$, such that
\begin{equation*}
\sup_{t \in [0,T]} |\hat{\xi}^{\ABEP}_{N,\Delta t}(t) - X(t)| \leq \eta_{\epsilon} \Delta t^{1/2 - \epsilon},
\end{equation*}
almost surely. Moreover, $\eta_{\epsilon}$ is independent of $N \in \mathbb{N}$ and $\Delta t$.
\end{corollary}

\section{Numerical experiments}\label{sec:numExp}
We present numerical experiments to numerically confirm the theoretical results obtained in Theorem~\ref{th:ABEMLpConv}. Recall from Section~\ref{sec:artBarr} that $M \in \mathbb{N}$ denotes the number of subintervals $[t_{m},t_{m+1}]$ each of size $\Delta t = T/M$. In this section we strive to verify that the ABEM scheme achieves $L^{p}(\Omega)$-convergence rate of $1/2-$ as stated in Theorem~\ref{th:ABEMLpConv}, and to this end we couple $N$ and $\Delta t$ as $N = \Delta t^{-d}$ for some $d>0$. Let $\Delta B_{m} = B(t_{m+1}) - B(t_{m})$ denote the Brownian motion increment over the interval $[t_{m},t_{m+1}]$. 

We provide numerical experiments that illustrate boundary-preservation and $L^{2}(\Omega)$-convergence of order $1/2-$ of the ABEM scheme as proved in Section~\ref{sec:ABEM}. Boundary-preservation of the proposed ABEM scheme, denote by $\hat{\xi}^{\ABEM}$ below, is compared with the following schemes that lack boundary-preservation (see the following sections):
\begin{itemize}
\item the Euler--Maruyama scheme (denoted EM below), see for instance \cite{MR1214374}
$$
X^{\EM}_{m+1}=X^{\EM}_m+ f(X^{\EM}_{m})\Delta t + g(X^{\EM}_m) \Delta B_{m},
$$
\item the semi-implicit Euler--Maruyama scheme (denoted SEM below), see for instance \cite{MR1214374}
$$
X^{\SEM}_{m+1}=X^{\SEM}_m+ f(X^{\SEM}_{m+1})\Delta t + g(X^{\SEM}_m) \Delta B_{m},
$$
\item the tamed Euler scheme (denoted TE below), see for instance \cite{MR2985171, MR3543890}
$$
X^{\TE}_{m+1}=X^{\TE}_m+ f^{M}(X^{\TE}_{m})\Delta t + g^{M}(X^{\TE}_m) \Delta B_{m},
$$
where
$$
f^{M}(x) = \frac{f(x)}{1 + M^{-1/2} |f(x)| + M^{-1/2} | g(x)|^{2}}
$$
$$
g^{M}(x) = \frac{g(x)}{1 + M^{-1/2} |f(x)| + M^{-1/2} | g(x)|^{2}}.
$$
\end{itemize}
We present boundary-preservation and lack thereof in tables displaying the number of sample paths out of $100$ that only contained values in the domain $D$ and we present in loglog plots the $L^{2}(\Omega)$-errors given by
\begin{equation}\label{eq:L2err}
\left( \E \left[ \sup_{m=0,\ldots,M} |\hat{\xi}^{\ABEM}_{m} - X^{ref}_{m}|^{2} \right] \right)^{1/2}
\end{equation}
where the reference solution $X^{ref}$ is computed using the ABEM scheme with $\Delta t^{ref} = 10^{-7}$ (except for the SDE for geometric Brownian motion in Section~\ref{subsec:GBM} where we use the closed-form solution as reference). We approximate the expected value in~\eqref{eq:L2err} using $300$ Monte Carlo samples. Numerical experiments have verified that $300$ Monte Carlo samples is sufficient for the Monte Carlo error to be small enough to observe the order of $L^{2}(\Omega)$-convergence when approximating the expected value in~\eqref{eq:L2err}.

\subsection{Geometric Brownian motion}\label{subsec:GBM}
We first provide numerical experiments for the SDE for geometric Brownian motion given by
\begin{equation}\label{eq:gBM}
\diff X(t) = -X(t) \diff t + \lambda X(t) \diff B(t),\ t \in (0,T],
\end{equation}
with initial value $X(0)=x_{0} \in (0,\infty)$;
that is, $f(x) = -x$ and $g(x) = \lambda x$ are linear in the considered SDE in equation~\eqref{eq:SDE}. We introduce a parameter $\lambda>0$ to better illustrate that the EM, SEM and TE schemes do not preserve the boundaries of the SDE~\eqref{eq:gBM}. In this case $D = (0,\infty)$ and the SDE in~\eqref{eq:gBM} is an example of a one-finite-boundary case. The solution $X(t)$ of~\eqref{eq:gBM} is called geometric Brownian motion and is given by
\begin{equation}\label{eq:gBmExplicit}
X(t) = x_{0} \exp \left( - \left( 1 + \frac{\lambda^{2}}{2} \right) t + \lambda B(t) \right),\ t \in [0,T].
\end{equation}
We use~\eqref{eq:gBmExplicit} as the reference solution for these numerical experiments. Geometric Brownian motion is one of the fundamental stochastic processes used in financial mathematics to model stock price fluctuations (for example within the famous Black-Scholes framework \cite{MR3363443,MR496534}).

We first provide in Figure~\ref{num:gBM_path_comp} sample paths of the EM, SEM, TE and ABEM schemes that illustrate that the EM, SEM and TE schemes are not boundary-preserving. All sample paths in Figure~\ref{num:gBM_path_comp} use the same Brownian motion sample path for the computations. Note that the sample paths of EM, SEM and TE schemes almost coincide in Figure~\ref{num:gBM_path_comp}.

\begin{figure}[h!]
\begin{center}
  \includegraphics[width=0.8\textwidth]{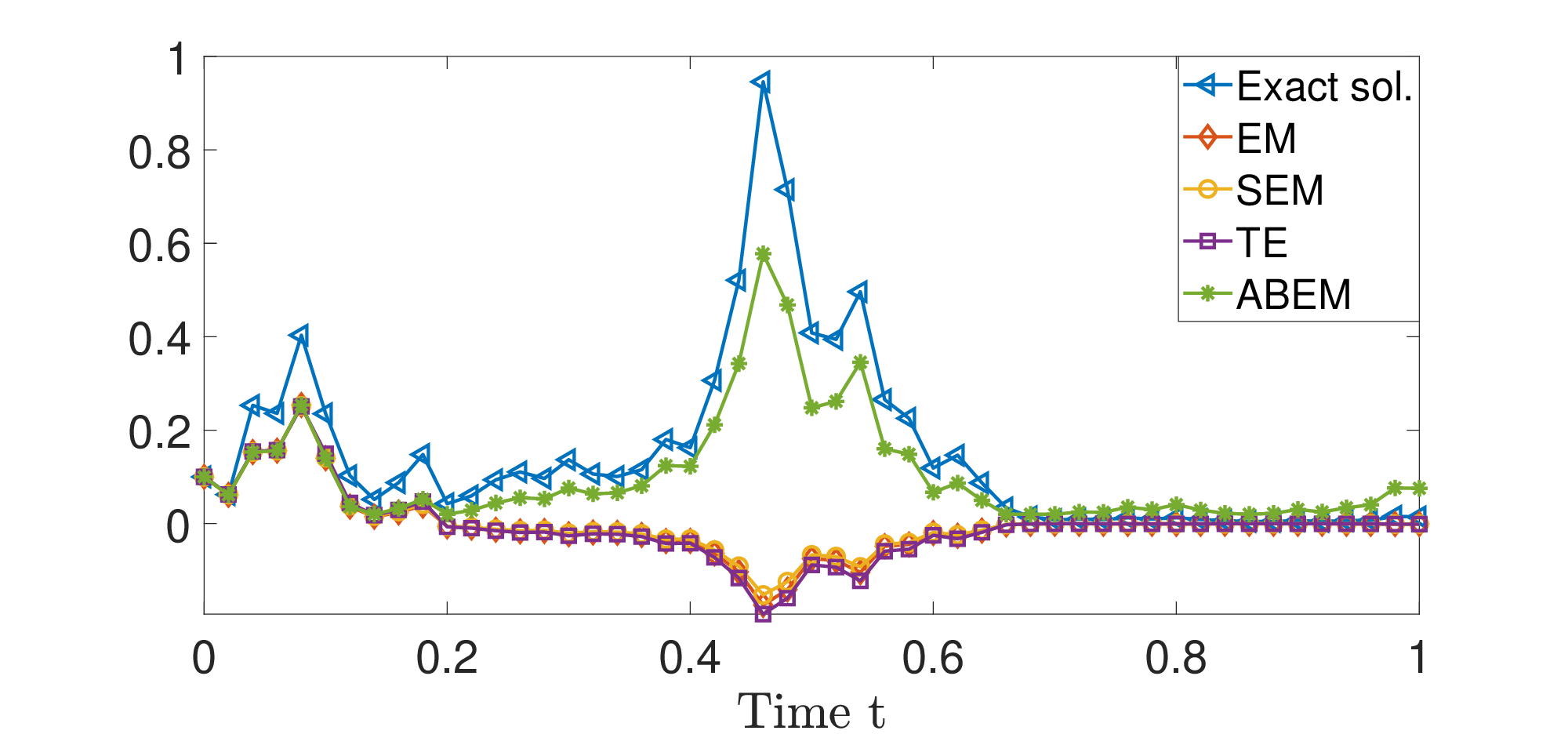}
  \caption{Path comparison of the Euler--Maruyama scheme (EM), the semi-implicit Euler--Maruyama scheme (SEM), the tamed Euler scheme (TE), and Artificial Barriers Euler--Maruyama scheme (ABEM) applied to the SDE for geometric Brownian motion in~\eqref{eq:gBM} with parameters $\lambda = 3$, $T = 1$, $x_{0}=0.1$, $d=1$ and $\Delta t = 0.02$.}\label{num:gBM_path_comp}
  \end{center}
\end{figure}

Next, we provide in Table~\ref{tb:gBM}, for some choices of $\lambda>0$, proportions out of $100$ samples of the EM, SEM, TE and ABEM schemes that only contain values in the domain $D = (0,\infty)$ of the SDE in~\eqref{eq:gBM}. Table~\ref{tb:gBM} confirms that the ABEM scheme is boundary-preserving and shows that the EM, SEM and TE schemes are not. As expected, the number of sample paths of the EM, SEM and TE schemes that only contain values in the domain $(0,\infty)$ decreases as $\lambda>0$ increases. We used the parameters $T=1$, $x_{0}=0.1$, $d=1/2$ and $\Delta t = 10^{-2}$ in Table~\ref{tb:gBM}.

\begin{table}[h!]
\begin{center}
\begin{tabular}{||c c c c c||} 
 \hline
 $\lambda$ & EM & SEM & TEM & ABEM \\ [0.5ex] 
 \hline\hline
 $2.5$ & $100/100$ & $100/100$ & $100/100$ & $100/100$ \\ 
 \hline
 $3$ & $99/100$ & $99/100$ & $99/100$ & $100/100$ \\ 
 \hline
 $3.5$ & $88/100$ & $89/100$ & $88/100$ & $100/100$ \\ [1ex]
 \hline
\end{tabular}
\caption{Proportion of samples containing only values in $D=(0,\infty)$ out of $100$ simulated sample paths for the Euler--Maruyama scheme (EM), the semi-implicit Euler--Maruyama scheme (SEM), the tamed Euler scheme (TE), and Artificial Barriers EM scheme (ABEM) for the SDE for geometric Brownian motion in~\eqref{eq:gBM} for different choices of $\lambda>0$ and parameters $T=1$, $x_{0}=0.1$, $d=1/2$ and $\Delta t = 10^{-2}$. \label{tb:gBM}}
\end{center}
\end{table}

Finally, we provide $L^{2}(\Omega)$-errors of the ABEM scheme in Figure~\ref{num:gBM_conv} and the numerical rate of $1/2$ confirms the theoretical rate obtained in Theorem~\ref{th:ABEMLpConv}. We used the parameters $T=1$, $x_{0}=0.1$, $d=1/2$ and the same values of $\lambda>0$ as in Table~\ref{tb:gBM} in Figure~\ref{num:gBM_conv}.

\begin{figure}[h!]
\begin{center}
  \includegraphics[width=0.8\textwidth]{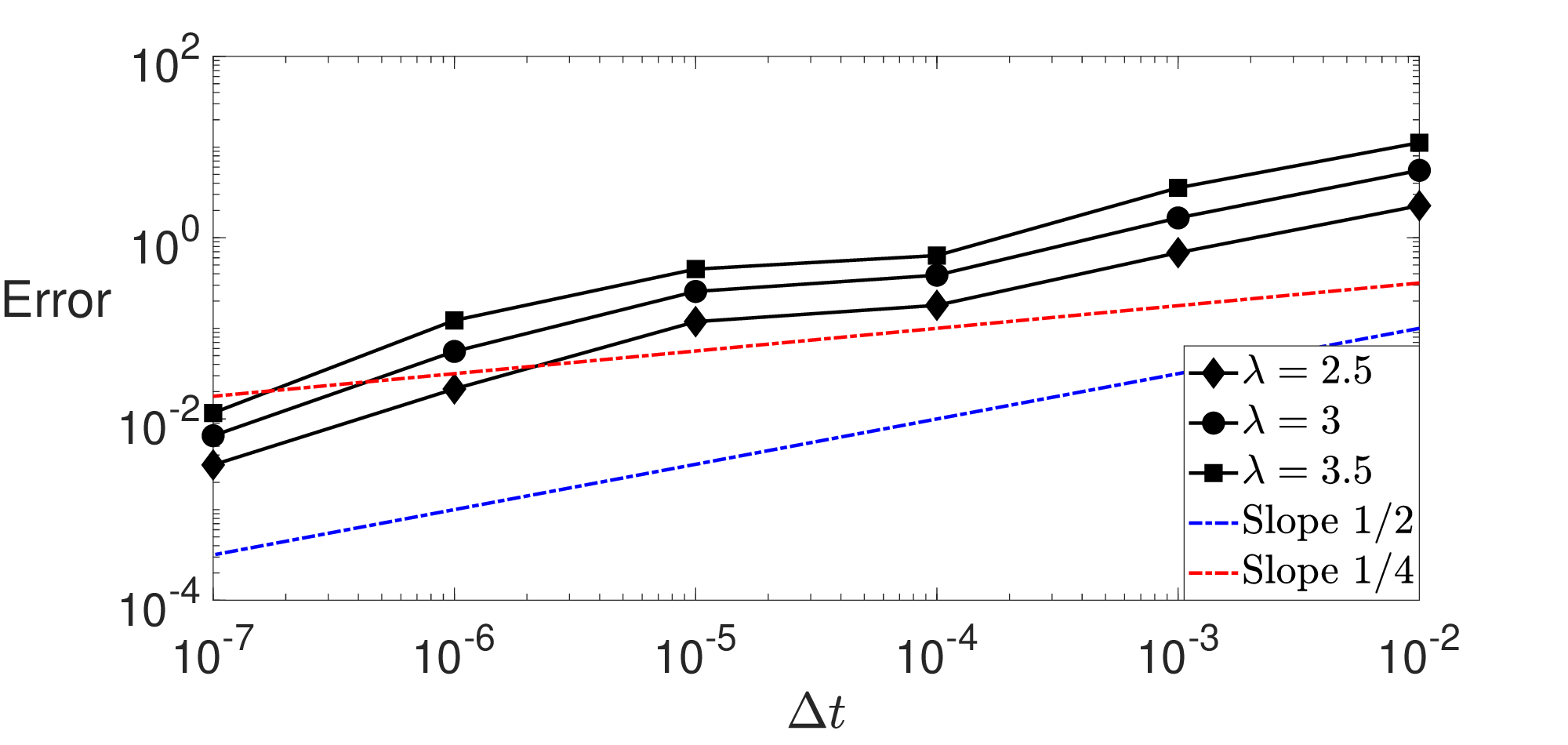}
  \caption{$L^{2}(\Omega)$-errors on the interval $[0,1]$ of the Artificial Barriers Euler--Maruyama scheme (ABEM) for the SDE for geometric Brownian motion in~\eqref{eq:gBM} for different choices of $\lambda>0$ and reference lines with slopes $1/2$ and $1/4$. Parameters: $T=1$, $x_{0}=0.1$, $d=1/2$ and $300$ Monte Carlo samples to approximate~\eqref{eq:L2err}.}\label{num:gBM_conv}
  \end{center}
\end{figure}

\subsection{Allen--Cahn type SDE}\label{subsec:ACsde}
Next we provide numerical experiments for the Allen--Cahn type SDE given by
\begin{equation}\label{eq:ACsde}
\diff X(t) = \left( X(t) - X(t)^{3} \right) \diff t + \lambda (1-X(t)^2) \diff B(t),\ t \in [0,T]
\end{equation}
with initial value $X(0)=x_{0} \in (-1,1)$; that is, $f(x) = x - x^{3}$ is cubic and $g(x) = \lambda(1 - x^{2})$ is quadratic in the considered SDE in equation~\eqref{eq:SDE}. For the same reason as in the example of geometric Brownian motion (gBm) in Section~\ref{subsec:GBM}, we use a parameter $\lambda>0$ in~\eqref{eq:ACsde}. In this case $D = (-1,1)$ and the Allen-Cahn type SDE in~\eqref{eq:ACsde} represents an instance of a two-finite-boundaries case. The considered Allen-Cahn type SDE is motivated, for example, by a spatial finite difference space discretisation of a stochastic Allen-Cahn PDE \cite{ALLEN19791085,MR3986273,MR1644183,MR3308418}. In contrast to the gBm case in Section~\ref{subsec:GBM}, there is no closed form solution to~\eqref{eq:ACsde}.

In Figure~\ref{num:AC_path_comp}, we provide sample paths for the EM, SEM, TE and ABEM schemes applied to the SDE~\eqref{eq:ACsde} that illustrate that the former three are not boundary-preserving while the ABEM scheme is boundary-preserving. We used the same Brownian motion sample path for all four schemes in Figure~\ref{num:AC_path_comp}. The sample paths of the EM, SEM and TE schemes almost coincide in Figure~\ref{num:AC_path_comp}.

\begin{figure}[h!]
\begin{center}
  \includegraphics[width=0.8\textwidth]{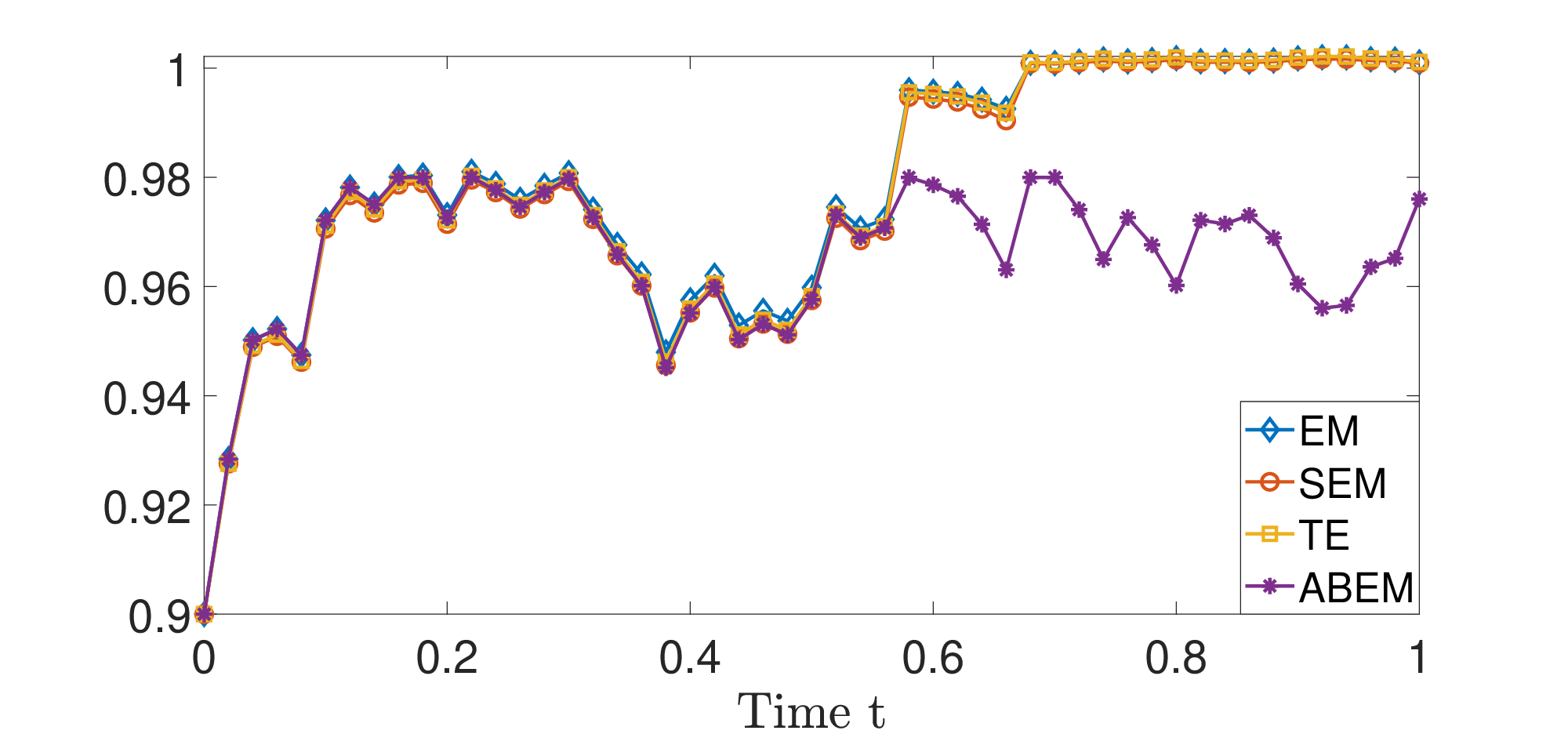}
  \caption{Path comparison of the Euler--Maruyama scheme (EM), the semi-implicit Euler--Maruyama scheme (SEM), the tamed Euler scheme (TE), and Artificial Barriers Euler--Maruyama scheme (ABEM) applied to the Allen-Cahn type SDE in~\eqref{eq:ACsde} with parameters $\lambda = 1$, $T = 1$, $x_{0}=0.9$, $d=1$ and $\Delta t = 0.02$.}\label{num:AC_path_comp}
  \end{center}
\end{figure}

As in Section~\ref{subsec:GBM}, we present in Table~\ref{tb:AC} the proportions out of $100$ samples of the EM, SEM, TE and ABEM schemes that preserved the domain $D = (-1,1)$ of the SDE in~\eqref{eq:ACsde}. The outcome of the experiment in Table~\ref{tb:AC} confirms that the ABEM scheme is boundary-preserving and that the EM, SEM and TE schemes are not. We used the parameters $T=1$, $x_{0}$ uniformly distributed on $(-1,1)$, $d=1/2$ and $\Delta t = 10^{-2}$ in Table~\ref{tb:AC}.

\begin{table}[h!]
\begin{center}
\begin{tabular}{||c c c c c||} 
 \hline
 $\lambda$ & EM & SEM & TEM & ABEM \\ [0.5ex] 
 \hline\hline
 $1$ & $100/100$ & $100/100$ & $100/100$ & $100/100$ \\ 
 \hline
 $1.5$ & $99/100$ & $99/100$ & $99/100$ & $100/100$ \\ 
 \hline
 $2$ & $59/100$ & $62/100$ & $65/100$ & $100/100$ \\ [1ex]
 \hline
\end{tabular}
\caption{Proportion of samples containing only values in $D = (-1,1)$ out of $100$ simulated sample paths for the Euler--Maruyama scheme (EM), the semi-implicit Euler--Maruyama scheme (SEM), the tamed Euler scheme (TE), and Artificial Barriers Euler--Maruyama scheme (ABEM) for the Allen--Cahn type SDE in~\eqref{eq:ACsde} for different choices of $\lambda>0$ and parameters $T=1$, $x_{0}$ uniformly distributed on $(-1,1)$ for each sample, $d = 1/2$ and $\Delta t = 10^{-2}$. \label{tb:AC}}
\end{center}
\end{table}

Lastly, we present in Figure~\ref{num:ACsde_conv} the $L^{2}(\Omega)$-errors of the ABEM scheme and the estimated rate of $1/2$ confirms the theoretical rate in Theorem~\ref{th:ABEMLpConv}. In Figure~\ref{num:AC_path_comp}, we used $T=1$, $x_{0}=0$, $d=1/2$,  and the same values of $\lambda>0$ as in Table~\ref{tb:AC} above.

\begin{figure}[h!]
\begin{center}
  \includegraphics[width=0.8\textwidth]{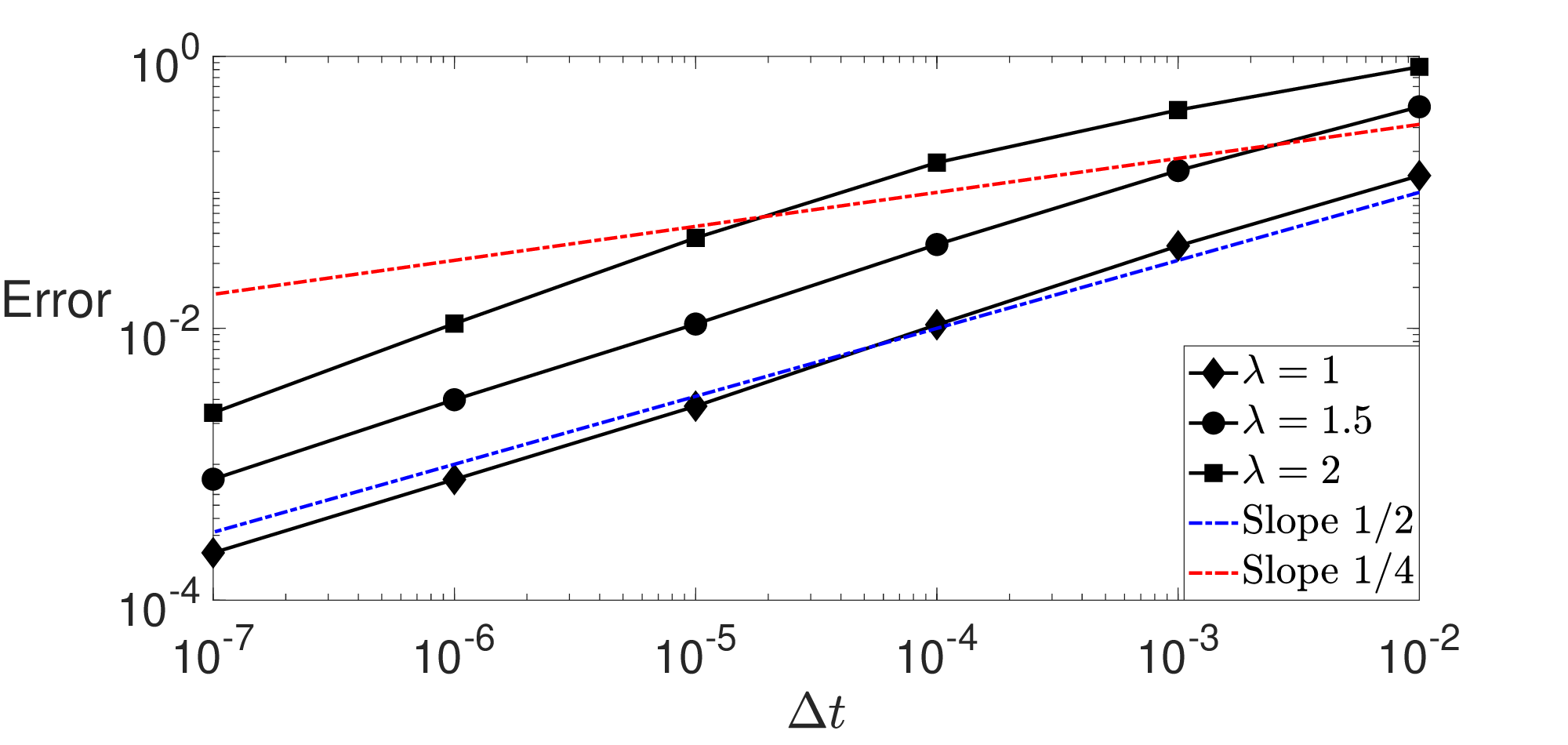}
  \caption{$L^{2}(\Omega)$-errors on the interval $[0,1]$ of the Artificial Barriers Euler--Maruyama scheme (ABEM) for the Allen--Cahn type SDE in~\eqref{eq:ACsde} for different choices of $\lambda>0$ and reference lines with slopes $1/2$ and $1/4$. Parameters: $T=1$, $x_{0}=0$, $d=1/2$ and $300$ Monte Carlo samples to approximate~\eqref{eq:L2err}.}\label{num:ACsde_conv}
  \end{center}
\end{figure}

\subsection{Hat function SDE}\label{subsec:hatFuncSDE}
Lastly, we provide numerical experiments for the hat function SDE given by
\begin{equation}\label{eq:hatSDE}
\diff X(t) = -H(X(t)) \diff t + \lambda H(X(t)) \diff B(t),\ t \in [0,T],
\end{equation}
with initial value $X(0)=x_{0} \in (0,1)$ and where $H:\mathbb{R} \to \mathbb{R}$ is the following hat function
\begin{equation*}
H(x) = 
\begin{cases} 
x,\ x \in [0,1/2], \\
1-x,\ x \in [1/2,1], \\
0,\ \text{else}.
\end{cases}
\end{equation*}
That is, $f$ and $g$ are piecewise linear and non-differentiable at $x=1/2$ in the considered SDE in equation~\eqref{eq:SDE}. As in Section~\ref{subsec:GBM} and in Section~\ref{subsec:ACsde}, we introduce a parameter $\lambda>0$. In this case $D = (0,1)$ and the hat function SDE in~\eqref{eq:hatSDE} is an example of a two-finite-boundaries case. We consider~\eqref{eq:hatSDE} as it is a case with non-differentiable coefficients functions where the Lamperti-transform cannot be applied. No closed form solution to~\eqref{eq:hatSDE} exists. 

First, we present sample paths in Figure~\ref{num:hatFunc_path_comp} that illustrate that the EM, SEM and TE schemes are not boundary-preserving while the ABEM scheme is boundary-preserving. All sample paths in Figure~\ref{num:hatFunc_path_comp} were computed using the same Brownian motion sample path. Note that, as $f$ and $g$ vanish outside the domain $D = (0,1)$, the EM, SEM and TE schemes stay constant after leaving the domain $D = (0,1)$. As was also the case in Section~\ref{subsec:GBM} and in Section~\ref{subsec:ACsde}, the sample paths for the EM, SEM and TE schemes almost coincide in Figure~\ref{num:hatFunc_path_comp}.

\begin{figure}[h!]
\begin{center}
  \includegraphics[width=0.8\textwidth]{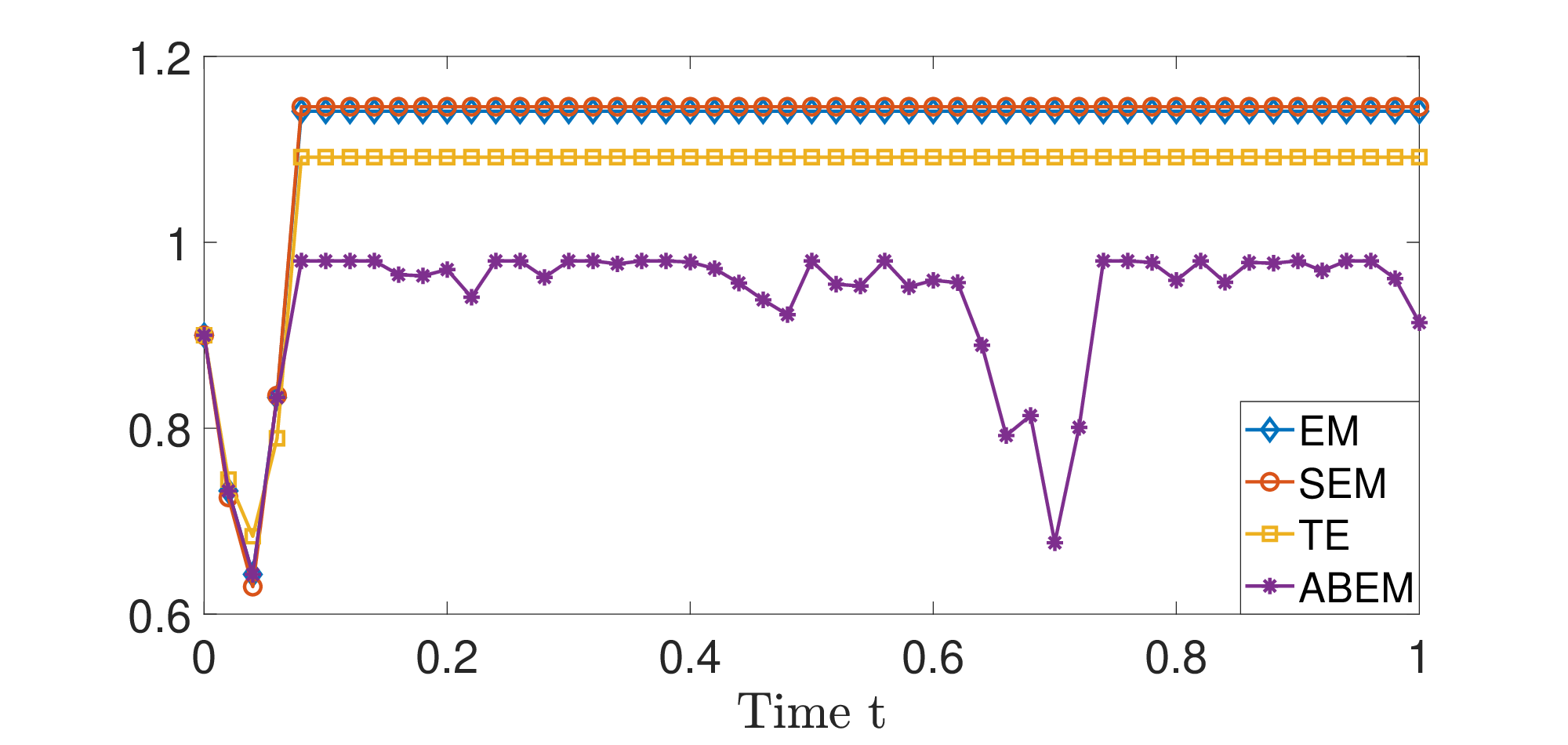}
  \caption{Path comparison of the EM, SEM, TE and ABEM schemes applied to the hat function SDE in~\eqref{eq:hatSDE} with parameters $\lambda = 3$, $T = 1$, $x_{0}=0.9$, $d=1$ and $\Delta t = 0.02$.}\label{num:hatFunc_path_comp}
  \end{center}
\end{figure}

Next, we consider numerical boundary-preservation of the EM, SEM, TE and ABEM schemes for the SDE~\eqref{eq:hatSDE}. We provide in Table~\ref{tb:hatFunc} the proportions out of $100$ generated samples that only contained values in $D = (0,1)$ of the EM, SEM, TE and ABEM schemes. Table~\ref{tb:hatFunc} confirms that the ABEM scheme is boundary-preserving while the EM, SEM and TE schemes are not.

\begin{table}[h!]
\begin{center}
\begin{tabular}{||c c c c c||} 
 \hline
 $\lambda$ & EM & SEM & TEM & ABEM \\ [0.5ex] 
 \hline\hline
 $1$ & $100/100$ & $100/100$ & $100/100$ & $100/100$ \\ 
 \hline
 $1.3$ & $99/100$ & $99/100$ & $99/100$ & $100/100$ \\ 
 \hline
 $1.6$ & $91/100$ & $93/100$ & $94/100$ & $100/100$ \\ [1ex]
 \hline
\end{tabular}
\caption{Proportion of samples containing only values in $D = (0,1)$ out of $100$ simulated sample paths for the Euler--Maruyama scheme (EM), the semi-implicit Euler--Maruyama scheme (SEM), the tamed Euler scheme (TE), and Artificial Barriers Euler--Maruyama scheme (ABEM) for the hat function SDE in~\eqref{eq:hatSDE} for different choices of $\lambda>0$, $T=1$, with $x_{0}$ uniformly distributed on $(0,1)$ for each sample, $d=1/2$ and $\Delta t = 10^{-2}$. \label{tb:hatFunc}}
\end{center}
\end{table}

Finally, we provide $L^{2}(\Omega)$-errors of the ABEM scheme applied to the SDE~\eqref{eq:hatSDE} in Figure~\ref{num:hatFunc_conv}. The estimated rate of $1/2$ in Figure~\ref{num:hatFunc_conv} confirms the theoretical rate obtained in Theorem~\ref{th:ABEMLpConv}. We used the parameters $T=1$, $x_{0} = 1/2$, $d=1/2$  and the same values of $\lambda>0$ as in Table~\ref{tb:hatFunc} in Figure~\ref{num:hatFunc_conv}.

\begin{figure}[h!]
\begin{center}
  \includegraphics[width=0.8\textwidth]{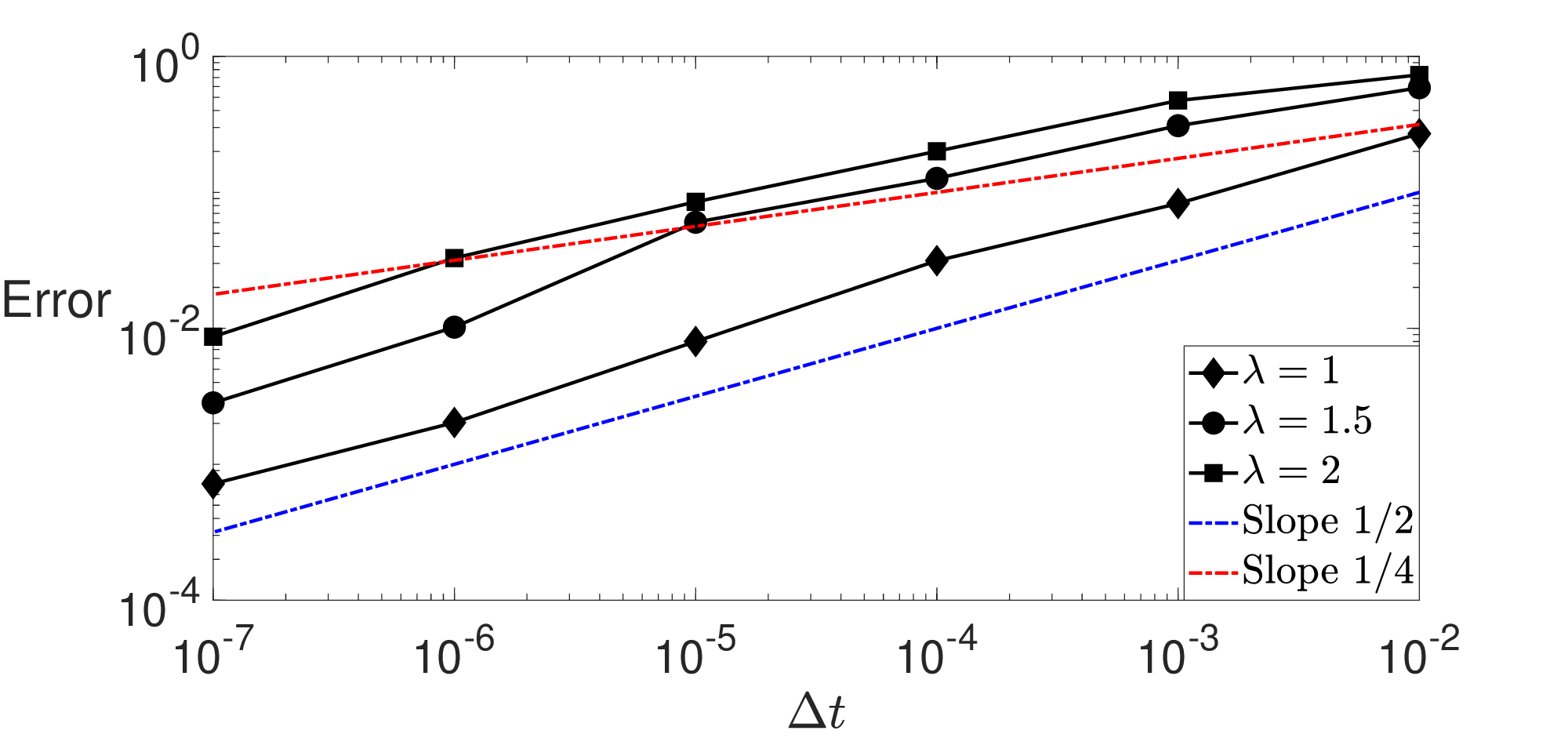}
  \caption{$L^{2}(\Omega)$-errors on the interval $[0,1]$ of the Artificial Barriers Euler--Maruyama scheme (ABEM) for the hat function SDE in~\eqref{eq:hatSDE} for different choices of $\lambda>0$ and reference lines with slopes $1/2$ and $1/4$. Parameters: $T=1$, $x_{0}=1/2$, $d=1/2$  and $300$ Monte Carlo samples to approximate~\eqref{eq:L2err}.}\label{num:hatFunc_conv}
  \end{center}
\end{figure}

\section*{Acknowledgements}
The author would like to thank Akash Sharma for reading and discussing the first version of the paper. The author would also like to thank David Cohen, Erik Jansson and Ioanna Motschan--Armen for their helpful comments to improve the presentation of parts of the work. This work is partially supported by the Swedish Research Council (VR) (David Cohen's project nr. $2018-04443$). 
The computations were enabled by resources provided by Chalmers e-Commons at Chalmers.

\bibliographystyle{abbrv}
\bibliography{Artificial_Barriers_for_stochastic_differential_equations_and_for_construction_of_Boundary-preserving_schemes}

\end{document}